\documentclass[reqno]{amsart}
\usepackage{amsmath}
\usepackage{amssymb}
\usepackage{amsthm}
\usepackage{bbm}
\usepackage{graphicx}
\usepackage{float}
\usepackage[usenames]{color}
\usepackage{cite}
\usepackage{enumerate}
\allowdisplaybreaks



\usepackage{color}

\usepackage[colorlinks,linkcolor=blue,anchorcolor=red,citecolor=blue]{hyperref}
\usepackage{marginnote}

\arraycolsep=1.5pt

\newcommand{\R}{\mathbb{R}}
\renewcommand{\d}{\mathrm{d}}

\newcommand{\hE}{\hat{E}}
\newcommand{\hmu}{\hat{\mu}}
\newcommand{\hg}{\hat{g}}
\newcommand{\hrho}{\hat{\rho}}
\newcommand{\hf}{\hat{f}}
\newcommand{\hS}{\hat{S}}
\newcommand{\tK}{\tilde{K}}
\newcommand{\hK}{\hat{K}}
\newcommand{\jbr}[1]{\left< #1 \right>}
\newcommand{\G}[1]{G\left[ #1 \right]}
\newcommand{\hnmu}{\widehat{\nabla_v \mu}}
\newcommand{\hAg}{\widehat{Ag}}
\newcommand{\abs}[1]{\left| #1 \right|}
\newcommand{\norm}[1]{\left\| #1 \right\|}
\newcommand{\Aetag}[1]{\left[A\partial_\eta^j\hg_{ #1 }\right]}

\newcommand{\tG}[1]{\widetilde{G}\left[ #1 \right]}
\newcommand{\tF}[1]{\widetilde{F}[ #1 ]}

\numberwithin{equation}{section}

\newtheorem{thm}{Theorem}[section]
\newtheorem{lemma}[thm]{Lemma}
\newtheorem{prop}[thm]{Proposition}
\newtheorem{remark}[thm]{Remark}
\newtheorem{coro}[thm]{Corollary}

\newcommand{\T}{\mathbb{T}}
\newcommand{\Z}{\mathbb{Z}}
\newcommand{\C}{\mathbb{C}}

\renewcommand{\L}{\mathcal{L}}

\makeatletter
\@namedef{subjclassname@2020}{%
	\textup{2020} Mathematics Subject Classification}
\makeatother

\title[Two species Landau damping]{A note on Landau damping of two-species Vlasov-Poisson system}

\author[R.-J. Duan]{Renjun Duan}
\address[R.-J. Duan]{Department of Mathematics, The Chinese University of Hong Kong, Shatin, Hong Kong, P.R.~China}
\email{rjduan@cuhk.edu.hk}

\author[Z. Zhang]{Zhiwen Zhang}
\address[Z. Zhang]{Department of Mathematics, The Chinese University of Hong Kong, Shatin, Hong Kong, P.R.~China}
\email{zwzhang@math.cuhk.edu.hk}

\date{\today}

\subjclass[2020]{35Q83}

\keywords{Vlasov-Poisson system, two-species, Landau damping, regularity}

\begin{document}
	\thispagestyle{empty}
	\begin{abstract}
In this note we adopt an approach by Grenier, Nguyen and Rodnianski  in \cite{GNR} for studying the nonlinear Landau damping of the two-species Vlasov-Poisson system in the phase space $\mathbb{T}^d_x \times \mathbb{R}^d_v$ with the dimension $d\geq 1$. The main goal is twofold: one is to extend the one-species case  to the two-species case where the electron mass is finite and the ion mass is sufficiently large, and the other is to modify the $G$-functional such that it involves the norm in $L^{d+1}$ instead of $L^2$ as well as derivatives up to only the first order.  
	\end{abstract}
	
	\maketitle
	
\section{Introduction}
The Vlasov-Poisson system, a fundamental collisionless kinetic model in plasma physics, describes the time evolution of non-negative velocity distribution functions $F_+(t,x,v)$ and $F_-(t,x,v)$ for positively charged ions and negatively charged electrons, respectively. The governing system for $F_\pm(t,x,v)$ reads as
	\begin{equation}
		\label{e1}
		\left\{
			\begin{aligned}
				& \partial_t F_+ + v \cdot \nabla_x F_+ + \frac{e}{m_+} E \cdot \nabla_v F_+ = 0,\\
				& \partial_t F_- + v \cdot \nabla_x F_- - \frac{e}{m_-} E \cdot \nabla_v F_- = 0,\\
				& E = -\nabla_x \phi, \\
				& -\Delta_x \phi = \int_{\R^d} (F_+ - F_-)\,\d v,
			\end{aligned}
			\right.
	\end{equation}
with $(t,x,v)\in (0,\infty)\times \T^d\times \R^d$.
	In the above equations, $e$ denotes the electron charge, while $m_+$ and $m_-$ represent the masses of ions and electrons, respectively. It is noteworthy that the mass of an electron is significantly less than that of an ion, typically $\frac{m_-}{m_+} \approx 0.005$ in a hydrogen plasma, as highlighted in \cite{FG}. Introducing a new parameter $\varepsilon = \frac{m_-}{m_+}$, defining the transformed distribution functions on the torus as $G_\pm(t,x,v) = F_\pm(t,x,\frac{e}{m_-}v)$ and then taking $e=1$ and $m_-=1$ for simplicity, the system (\ref{e1}) can be reformulated as
	\begin{equation}
		\label{e2}
		\left\{
		\begin{aligned}
			& \partial_t G_+ + v \cdot \nabla_x G_+ + \varepsilon E \cdot \nabla_v G_+ = 0,\\
			& \partial_t G_- + v \cdot \nabla_x G_- - E \cdot \nabla_v G_- = 0,\\
			& E = -\nabla_x \phi, \\
			& -\Delta_x \phi = 
			\int_{\R^d} (G_+ - G_- )\,\d v.
		\end{aligned}
		\right.
	\end{equation}
		
The Vlasov-Poisson system serves as a fundamental model for plasma dynamics. In 1961, S. V. Iordanski\u{\i} \cite{Ior} established the well-posedness of the Vlasov-Poisson system in one dimension. In 1978, S. Ukai and T. Okabe  \cite{UO} extended this result to the two-dimensional case. The three-dimensional problem was independently proved by P.-L. Lions and B. Perthame \cite{LP} and K. Pfaffelmoser \cite{Pfa} (see also \cite{BD2,Scha}). A. A. Arsenev \cite{Arsensev} demonstrated the global existence of weak solutions in three dimensions. E. Horst and R. Hunze \cite{HHN} further improved this result by relaxing the assumptions on the initial data; see also \cite{BD,BDG}.
	
	In the reformulated Vlasov-Poisson system given by the equations in (\ref{e2}), the analysis is extended to understand the behavior of the system near a homogeneous equilibrium characterized by \( G_+ = G_- = \mu(v) \), where \( \mu(v) \) is a given spatially homogeneous distribution function such that \( \int_{\R^d}\mu(v)\d v = 1 \). The perturbations \( f_+ \) and \( f_- \) from this equilibrium are introduced as \( G_\pm = \mu + f_\pm \), leading to the perturbed system:
	\begin{equation}
		\label{e3}
		\left\{
		\begin{aligned}
			& \partial_t f_+ + v \cdot \nabla_x f_+ + \varepsilon E \cdot \nabla_v \mu = -\varepsilon E \cdot \nabla_v f_+,\\
			& \partial_t f_- + v \cdot \nabla_x f_- - E \cdot \nabla_v \mu = E \cdot \nabla_v f_-,\\
			& E = -\nabla_x \phi, \\
			& -\Delta_x \phi = \rho_+ - \rho_-,\\
			& \rho_+ = \int_{\R^d} f_+ \, dv, \ \rho_- = \int_{\R^d} f_- \, \d v,
		\end{aligned}
		\right.
	\end{equation}
supplemented with initial data
$$
f_\pm(0,x,v)=f^0_\pm(x,v).
$$
	The system above is considered under the initial condition ensuring neutrality, as given by:
	\begin{equation}
		\label{initial}
		\int_{\T^d} \int_{\R^d} f^0_+ - f^0_- \, \d v\, \d x = 0,
	\end{equation}
	which reflects the physical requirement that the total charge due to perturbations in the plasma remains neutral overall.
	
	The primary focus of this examination is on the phenomenon of Landau damping, a mechanism by which the electric field \( E \) decays exponentially over time without requiring collisions, merely through the kinetic behavior of the plasma particles. The damping is understood both in the linearized context, where terms involving \( \nabla_v f_\pm \) are neglected, and in the full nonlinear setting.
	
	Landau damping was initially identified in the linear regime by L. D. Landau \cite{Landau} and has since been a pivotal concept in plasma physics. The extension to nonlinear settings for analytic initial data by C. Mouhot and C. Villani \cite{MV} marked a significant advancement in the theory. Their results demonstrated that if the initial perturbations are sufficiently small and analytic, the electric field would decay exponentially even in the nonlinear regime.
	
	Subsequent studies, such as those by J. Bedrossian, N. Masmoudi, and C. Mouhot \cite{BMM}, and further simplified proofs in specific dimensions by E. Grenier, T. T. Nguyen, and I. Rodnianski \cite{GNR}, have broadened the understanding of this phenomenon, including its applicability to situations with less restrictive assumptions on the data (e.g., Gevrey class).
	
	For the two-species system, as discussed by L. Baumann and M. Pirner \cite{BP}, the exploration of linear Landau damping provides foundational insights into how the interactions between species, differentiated by mass scale \( \varepsilon \), affect the damping behavior. This paper aims to extend these analyses by considering how these interactions influence the nonlinear stability and long-time dynamics of the plasma, particularly focusing on how the small parameter \( \varepsilon \) modulates these effects.
	
	\subsection{Equilibria}
	The following conditions (\ref{H1}) and (\ref{H2}) specified for the equilibrium distribution function \( \mu(v) \) are crucial for ensuring the stability and decay properties of the system described by the perturbed Vlasov-Poisson equations. These conditions serve to control the behavior of the system both in Fourier space and under the influence of perturbations:
	
\medskip
	\noindent {\bf Condition \ref{H1}: Analyticity and Decay of Fourier Transform}
	
	Condition (\ref{H1}) requires that the Fourier transform \( \hat{\mu} \) of the distribution function \( \mu \) not only be real analytic but also exhibit exponential decay in the Fourier space. Specifically, it states:
	\begin{equation}
		\tag{H1}
		\label{H1}
		\sum_{|j| \leq 2} \left|\partial^j_\eta \hat{\mu}(\eta)\right| \leqslant C_{\mu} e^{-\theta_0|\eta|},
	\end{equation}
	where \( j \) is a multi-index and \( \eta \) denotes the frequency variable. The exponential decay characterized by \( \theta_0 \) ensures that \( \mu \) has good regularity properties in physical space, which are essential for the analytic treatment of the Vlasov-Poisson equations, particularly when considering perturbations around the equilibrium.
	
	The real analyticity and specified decay rate in the Fourier space imply that \( \mu \) is sufficiently smooth and rapidly decreasing, which helps in controlling the nonlinear terms and ensuring that the perturbative analysis remains valid.

\medskip
	\noindent {\bf Condition \ref{H2}: Penrose Stability Condition}
	
	Condition (\ref{H2}), known as the Penrose stability condition, is a fundamental criterion for the linearized stability of the equilibrium in plasma physics:
	\begin{equation}
		\label{H2}
		\tag{H2}
		\inf_{k \in \Z^d \setminus \{0\}; \Re \lambda \geqslant 0} \left|1 + \int_0^\infty e^{-\lambda t} t \hat{\mu}(kt) \, \d t\right| \geqslant \kappa_0 > 0,
	\end{equation}
	where \( \lambda \) is a complex number with \( \Re \lambda \) denoting its real part. This condition essentially ensures that the plasma response function does not have zeros in the upper half-plane, which would indicate instability or growing modes in the system. The infimum being strictly positive \( (\kappa_0 > 0) \) across all non-zero wave numbers \( k \) in \( \Z^d \) and for all \( \lambda \) with non-negative real parts guarantees that the system is stable against small perturbations.
	
	The Gaussian example \( \mu(v) = e^{-\frac{|v|^2}{2}} \) satisfies these conditions, making it a standard choice in theoretical studies of plasma dynamics. The conditions are also valid more generally for positive, radially symmetric functions in three or more dimensions, broadening the range of potential applications in plasma physics and related fields.
	
	\subsection{Notation}
	Let \( k \in \mathbb{Z}^d \) and \( \eta \in \mathbb{R}^d \). We define the Fourier transform of \( f(t,x,v) \) as
	\[
	\hat{f}(t,k,\eta) = \int_{\mathbb{T}^d} \int_{\mathbb{R}^d} f(t,x,v) e^{-i k \cdot x} e^{-i \eta \cdot v} \, \d v \, \d x,
	\]
	which allows us to express \( f(t,x,v) \) in terms of its Fourier transform by
	\[
	f(t,x,v) = \frac{1}{(2\pi)^{2d}} \sum_{k \in \mathbb{Z}^d} \int_{\mathbb{R}^d} \hat{f}(t,k,\eta) e^{i k \cdot x} e^{i \eta \cdot v} \, \d\eta.
	\]
	Similarly, for the density \( \rho(t,x) \), we have
	\[
	\hat{\rho}(t,k) = \int_{\mathbb{T}^d} \rho(t,x) e^{-i k \cdot x} \, \d x,
	\]
	and consequently,
	\[
	\rho(t,x) = \frac{1}{(2\pi)^d} \sum_{k \in \mathbb{Z}^d} \hat{\rho}(t,k) e^{i k \cdot x}.
	\]
	
	We introduce the concept of {\it generator functions} to quantify the Gevrey regularity of the solution. Define \( g_\pm(t,x,v) = f_\pm(t,x+vt,v) \); then the dynamical equations (\ref{e3}) become:
	\begin{equation}
		\label{eq}
		\left\{
		\begin{aligned}
			&\partial_t g_++\varepsilon E(t,x+vt) \nabla_v \mu(v) =-\varepsilon E(t,x+vt)(\nabla_v-t\nabla_x)g_+,\\
			&\partial_t g_--E(t,x+vt) \nabla_v \mu(v)= E(t,x+vt)(\nabla_v-t\nabla_x)g_-,\\
			&E=-\nabla_x \phi, \ -\Delta_x \phi=\rho=\rho_+-\rho_-,\\
			& \rho_\pm=\int_{\R^d}g_\pm(t,x-vt,v)\,\d v.
		\end{aligned}
		\right.
	\end{equation}
	Let \( z \geq 0 \) denote the analyticity radius, \( \gamma \in (0,1] \) the Gevrey index, \( j \in \mathbb{N}^d \) a multi-index, \( \sigma > d+1 \), and \( \alpha < \frac{1}{d+1} \). For \( \rho \) as specified, we define:
	\begin{equation}
		\label{Fdef}
		F[\rho](t,z)=\sup_{k\in \Z^d\setminus\{0\}}e^{z\left<k,kt\right>^\gamma} |\hat{\rho}(t,k)|\left<k,kt\right>^\sigma |k|^{-\alpha},
	\end{equation}
	and for $g$ in (\ref{eq}), we define
	\begin{multline}
		\label{Gdef}
		G[g(t)](z)=\sum_{|j|\leqslant 1}\sum_{k\in\Z^d}\int_{\R^d} e^{(d+1)z\left<k,\eta\right>^\gamma}\left[ \left|\partial^j_\eta \hat{g}_+(k,\eta)\right|^{d+1}\right.\\
		\left.+\left|\partial^j_\eta \hat{g}_-(k,\eta)\right|^{d+1} \right] \left<k,\eta\right>^{(d+1)\sigma} \d \eta.
	\end{multline}
	
	The definition of generator functions \( F[\rho] \) and \( G[g] \) differs from that in \cite{GI}. Though, it remains valid that \( F[\rho](t,z) \leqslant C_0 G[g(t)]^{\frac{1}{d+1}}(z) \) as demonstrated in Lemma \ref{FC0G}. In particular, the $G$-functional in \eqref{Gdef} involves the norm in $L^{d+1}$ instead of $L^2$ as well as derivatives in $\eta$ up to only the first order. 
	
	\subsection{Main result}
	In the conventional methodology for validating nonlinear Landau damping, our initial step involves establishing a parallel result within the context of the linearized two-species Vlasov-Poisson system, as delineated by the following set of equations:
	\begin{equation}
		\label{lineareq}
		\left\{
		\begin{aligned}
			&\partial_t g_+(t,x,v)+\varepsilon E(t,x+vt)\nabla_v\mu(v)=0, \\
			&\partial_t g_-(t,x,v)- E(t,x+vt)\nabla_v\mu(v)=0,\\
			& E=-\nabla_x \phi, \ -\Delta_x \phi=\rho=\rho_+-\rho_-,\\
			& \rho_\pm=\int_{\R^d} g_\pm (t,x-vt,v)\,\d v.
		\end{aligned}
		\right.
	\end{equation}

	\begin{thm}
		\label{linearthm}
		Consider the linearized Vlasov-Poisson system as specified in Equation (\ref{lineareq}). Assuming that the parameter $\varepsilon$ is sufficiently small and that the Penrose conditions (\ref{H1},\ref{H2}) are satisfied, let $\hrho_\pm(t,k)$ represent the solutions to this linear problem, and let $F[\rho](t,z)$ denote the associated generator function. Then, for any $\gamma \in (0,1]$, the following inequality holds:
		\begin{equation}
			\label{estFr}
			F[\rho](t,z)\leqslant F[S] (t,z) +C\int_0^t e^{-\frac{1}{4}\theta_1(t-s)}F[S](s,z)\,\d s
		\end{equation}	
		for any $z \in \left[0,\frac{\theta_1}{2}\right]$, where $\theta_1$ and $C$ are universal constants and $S(t,x)$ is defined through the Fourier transform $\hS(t,k)=\hf^0_+(k,kt)-\hf_-^0(k,kt)$.
	\end{thm}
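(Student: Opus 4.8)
The plan is to follow the classical Landau--Penrose route: reduce \eqref{lineareq} to a scalar Volterra equation in time for each Fourier mode, check that the modified convolution kernel still satisfies a Penrose-type lower bound when $\varepsilon$ is small, and then transcribe the resolvent estimate of \cite{GNR} into the weighted quantity $F[\rho]$.

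First I would integrate the two transport equations of \eqref{lineareq} in $t$---legitimate since the forcing $\pm E\cdot\nabla_v\mu$ carries no $g_\pm$---obtaining $g_+(t,x,v)=f^0_+(x,v)-\varepsilon\int_0^t E(s,x+vs)\cdot\nabla_v\mu(v)\,\d s$ and $g_-(t,x,v)=f^0_-(x,v)+\int_0^t E(s,x+vs)\cdot\nabla_v\mu(v)\,\d s$. Substituting into $\rho_\pm(t,x)=\int_{\R^d}g_\pm(t,x-vt,v)\,\d v$, taking the Fourier transform in $x$, and using $\widehat{E}(s,k)=-ik\,\hrho(s,k)/|k|^2$ together with $\int_{\R^d}e^{-ik(t-s)\cdot v}\nabla_v\mu(v)\,\d v=ik(t-s)\hmu(k(t-s))$, the free-streaming parts collapse to $\hf^0_\pm(k,kt)$ and the two integral contributions merge; for each $k\in\Z^d\setminus\{0\}$ (the mode $k=0$ is irrelevant, since $E$ and $F[\rho]$ only see $k\neq 0$) this gives
\[
\hrho(t,k)=\hS(t,k)-(1+\varepsilon)\int_0^t (t-s)\,\hmu\!\bigl(k(t-s)\bigr)\,\hrho(s,k)\,\d s ,
\]
so the two-species kernel is exactly $(1+\varepsilon)$ times the one-species kernel $\tau\mapsto-\tau\hmu(k\tau)$. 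Next I would upgrade the Penrose condition: by \eqref{H1} the Laplace transform $\int_0^\infty e^{-\lambda t}t\hmu(kt)\,\d t$ is bounded, uniformly in $k\neq 0$ and $\Re\lambda\geq 0$, by a constant $C_\ast=C_\ast(C_\mu,\theta_0)$, so \eqref{H2} yields $\bigl|1+(1+\varepsilon)\int_0^\infty e^{-\lambda t}t\hmu(kt)\,\d t\bigr|\geq\kappa_0-\varepsilon C_\ast\geq\tfrac12\kappa_0$ whenever $\varepsilon\leq\kappa_0/(2C_\ast)$; thus the modified system still satisfies the Penrose stability condition, with constant $\kappa_0/2$.

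With this uniform lower bound, I would invoke the Volterra resolvent machinery of \cite{GNR}: the symbol $1-(1+\varepsilon)\widetilde K(\lambda,k)$ stays away from zero on $\Re\lambda\geq 0$ and, thanks to the analyticity and exponential decay of $\hmu$ from \eqref{H1}, extends analytically and remains bounded below on a strip $\Re\lambda\geq-\theta_1$ for some universal $\theta_1>0$ depending only on $\kappa_0,C_\mu,\theta_0$ (in particular $\theta_1\lesssim\theta_0$); inverting, the resolvent kernel of the Volterra equation decays exponentially in $t-s$ at a rate comparable to $\theta_1$, so that for each fixed $k$ one gets the plain bound $|\hrho(t,k)|\leq|\hS(t,k)|+C\int_0^t e^{-\tfrac12\theta_1(t-s)}|\hS(s,k)|\,\d s$. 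To reach \eqref{estFr} I would multiply through by $e^{z\jbr{k,kt}^\gamma}\jbr{k,kt}^\sigma|k|^{-\alpha}$ and redistribute the $t$-dependent weights over the kernel, using $\jbr{k,kt}^\gamma\leq\jbr{k,ks}^\gamma+(|k|(t-s))^\gamma$ (valid since $\gamma\leq 1$), the elementary $(|k|(t-s))^\gamma\leq\delta|k|(t-s)+C_\delta$, and $|\hmu(k(t-s))|\lesssim e^{-\theta_0|k|(t-s)}$ from \eqref{H1}: then both $e^{z\jbr{k,kt}^\gamma}/e^{z\jbr{k,ks}^\gamma}$ and $\jbr{k,kt}^\sigma/\jbr{k,ks}^\sigma\leq(1+|k|(t-s))^\sigma$ are absorbed by the kernel's exponential decay provided $z\leq\theta_1/2$, while the factor $|k|^{-\alpha}$ is common to $F[\rho]$ and $F[S]$ and simply passes through. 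Taking the supremum over $k$ and iterating the resulting scalar Volterra inequality then yields \eqref{estFr}, with the residual rate $\tfrac14\theta_1$ left over after the weight has consumed part of the $\tfrac12\theta_1$ budget.

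The hard part will be this last step, where the single constant $\theta_1$ has to play two competing roles: it is the time-decay rate extracted from the perturbed Penrose constant $\kappa_0/2$ by continuing the symbol analytically into $\Re\lambda\geq-\theta_1$, yet it must simultaneously be small enough that the transported Gevrey weight $e^{z\jbr{k,kt}^\gamma}$ stays under control across the kernel---which is exactly what forces the restriction $z\in[0,\theta_1/2]$. Carrying the polynomial weight $\jbr{k,kt}^\sigma$ through the time-convolution, where it does not act multiplicatively, while retaining a net exponential-in-$t$ gain, is the technical heart of the estimate; it runs parallel to the one-species computations in \cite{GNR}, the only genuinely new ingredient being the smallness threshold $\varepsilon\leq\kappa_0/(2C_\ast)$ produced by the perturbed Penrose condition.
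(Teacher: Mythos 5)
Your proposal follows essentially the same route as the paper: subtract the two mode-by-mode Volterra equations to get the closed scalar equation $\hrho+(1+\varepsilon)\,\hrho *_t t\hmu(kt)=\hS$, verify the perturbed Penrose bound $\kappa_0-\varepsilon C_*\geq\kappa_0/2$ (this is exactly Lemma \ref{H2l}), invert via the Laplace-transform resolvent machinery of \cite{GNR}, and transfer the Gevrey/polynomial weights across the convolution. The only substantive slip is in the intermediate pointwise bound: you write the resolvent decay as $e^{-\frac{1}{2}\theta_1(t-s)}$ and the analyticity strip as $\Re\lambda\geq-\theta_1$, \emph{without} the factor $|k|$. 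The analyticity region must scale like $\Re\lambda\geq-\theta_1|k|$ (as in Lemma \ref{esttK}), giving $|\hK(t-s,k)|\leq Ce^{-\theta_1|k|(t-s)}$; this $|k|$ is not cosmetic, because the weight ratio you need to absorb satisfies only $e^{z\jbr{k,kt}^\gamma-z\jbr{k,ks}^\gamma}\leq e^{z|k|(t-s)}$, and for $z>0$ fixed and $|k|$ large this cannot be controlled by $e^{-\frac{1}{2}\theta_1(t-s)}$ alone. With the $|k|$-scaled kernel decay restored, your absorption argument under $z\leq\theta_1/2$ (and the logarithmic comparison for the $\jbr{k,kt}^\sigma$ factor) goes through exactly as in the paper's proof, and one may then discard the residual $|k|\geq 1$ to land on the stated rate $\tfrac14\theta_1$.
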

	
	This result mirrors those presented by L. Baumann and M. Pirner \cite{BP}, which we further generalize within the Gevrey class framework. One essential assumption in Theorem \ref{linearthm} is smallness of $\varepsilon=\frac{m_-}{m_+}$ with $m_-=1$, meaning that the electron mass is finite and the ion mass is sufficiently large. 
	
	Subsequently, we deduce the occurrence of non-linear Landau damping in the two-species Vlasov-Poisson system \eqref{e3}. As seen from the proof later, we note that smallness of $\varepsilon$ is not necessary for the non-linear damping provided that Theorem \ref{linearthm} holds true.	
	
	\begin{thm}
		\label{mainthm}
		Consider the Vlasov-Poisson system delineated in Equation (\ref{e3}). Let $\varepsilon>0$ be the constant satisfying Theorem \ref{linearthm}, and let $\mu$ represent a homogeneous equilibrium that satisfies the hypotheses (H1) and (H2) with $\int_{\R^d}\mu(v)\d v=1$. Assume $\lambda_1>0$ and $\gamma=1$. There exists a sufficiently small $\varepsilon_0$ such that for any initial conditions $f^0_\pm$ fulfilling (\ref{initial}) and
		\begin{equation}
			\label{Ginitial}
			G[f^0](\lambda_1)\leqslant \varepsilon_0,
		\end{equation}
		where $G$ is defined as specified in (\ref{Gdef}) with $\sigma>\max\{d+1,3\}$ and $0<\alpha<\frac{1}{d+1}$, the Landau damping can be observed. Specifically, for the unique solution of (\ref{eq}), it holds that 
		\begin{equation*}
			\G{g(t)}(\lambda(t))\leqslant C\varepsilon_0,
		\end{equation*}
		and
		\begin{equation*}
			F[\rho](t,\lambda(t))\leqslant C\varepsilon_0^{\frac{1}{d+1}},
		\end{equation*}
		for any $t>0$. Here, the generator functions $F,G$ are respectively defined in (\ref{Fdef}) and (\ref{Gdef}). The function $\lambda(t)=\lambda_0+\lambda_0(1+t)^{-\delta}$ is defined for some suitable $\lambda_0>0$ and $\delta\in(0,1)$. Consequently, both the force field $E$ and the density $\rho$ asymptotically tends to zero at an exponential rate as time goes to infinity.
	\end{thm}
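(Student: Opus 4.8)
The plan is a continuity (bootstrap) argument on the pair of generator functions. Fix $\lambda_0\in(0,\lambda_1/2)$ small enough that also $2\lambda_0\le\theta_1/2\le\theta_0/2$, fix $\delta\in(0,1)$, and fix $c_0\in(0,\min\{\lambda_1-2\lambda_0,\theta_1/4\})$; then $\lambda(t)$ decreases strictly from $\lambda(0)=2\lambda_0$ to $\lambda_0$ and $\dot\lambda(t)=-\lambda_0\delta(1+t)^{-1-\delta}<0$. On the maximal interval $[0,T^\ast)$ on which the bootstrap bounds $G[g(t)](\lambda(t))\le A\varepsilon_0$ and $e^{c_0\langle t\rangle}F[\rho](t,\lambda(t))\le A\varepsilon_0^{1/(d+1)}$ both hold — where $A=A(d,\sigma,\alpha,\varepsilon,\lambda_0,\delta)$ is a large fixed constant, independent of $\varepsilon_0$, the second bound at $t=0$ following from the first by Lemma \ref{FC0G} — I will improve both bounds strictly when $\varepsilon_0$ is small, so that $T^\ast=\infty$ and the theorem follows (with $C$ depending on $\varepsilon$).

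\emph{Density estimate.} From the Duhamel formula for $\hat g_\pm(t,k,\eta)$ in \eqref{eq}, evaluated at $\eta=kt$ so that $\hat\rho_\pm(t,k)=\hat g_\pm(t,k,kt)$, the forcing of $\hat\rho=\hat\rho_+-\hat\rho_-$ splits into: the datum $\hat S^0(t,k)=\hat f^0_+(k,kt)-\hat f^0_-(k,kt)$; the linear term $\mp\varepsilon\,\widehat{E\nabla_v\mu}$ — which, $\mu$ having only the $x$-mode $0$, is entirely of frequency $\ell=k$ — together with the $\ell=k$ part $\mp i\varepsilon\,k(t-s)\hat E(s,k)\hat g_\pm(s,0,k(t-s))$ of the nonlinear term, which assemble a Volterra operator whose kernel involves $\hat\mu(k(t-s))+\hat g_\pm(s,0,k(t-s))$; and the true reaction terms $\mp i\varepsilon\sum_{\ell\ne k}k(t-s)\hat E(s,\ell)\hat g_\pm(s,k-\ell,kt-\ell s)$. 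Since $|k|(t-s)\,|\hat g_\pm(s,0,k(t-s))|\lesssim G[g(s)](\lambda(s))^{1/(d+1)}\langle k(t-s)\rangle^{1-\sigma}$ is integrable in $s$ (using $\sigma>2$), the perturbed Volterra operator differs from the one in Theorem \ref{linearthm} by an operator of norm $O(\varepsilon_0^{1/(d+1)})$ and is still inverted, with the same exponential kernel $e^{-\frac14\theta_1(t-s)}$, by a Neumann series; this is the only point where smallness of $\varepsilon$ is used, namely through the validity of Theorem \ref{linearthm}. Then \eqref{estFr}, with $S$ replaced by $S^0$ plus the reaction terms, yields
\[
F[\rho](t,\lambda(t))\le F[S^0](t,\lambda(t))+C\int_0^t e^{-\frac14\theta_1(t-s)}\bigl(F[S^0](s,\lambda(s))+F[\mathcal N](s,\lambda(s))\bigr)\,\d s ,
\]
and, since $\lambda(s)\le2\lambda_0<\lambda_1$ and $\langle k,ks\rangle\ge\langle s\rangle$ for $k\ne0$, Lemma \ref{FC0G} applied to the free-transport solution with datum $f^0$ gives $F[S^0](s,\lambda(s))\lesssim G[f^0](\lambda_1)^{1/(d+1)}e^{-(\lambda_1-2\lambda_0)\langle s\rangle}\le\varepsilon_0^{1/(d+1)}e^{-c_0\langle s\rangle}$, which decays and is summable against the Volterra kernel.

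\emph{The reaction term and the $G$-functional.} Both the bound on $F[\mathcal N]$ and that on $G[g]$ rest on: the convolution identity $\widehat{E(t,\cdot+vt)h}(k,\eta)=\sum_{\ell\ne0}\hat E(t,\ell)\hat h(t,k-\ell,\eta-\ell t)$; the pointwise bound $|\hat g_\pm(t,k,\eta)|\lesssim G[g(t)](\lambda(t))^{1/(d+1)}e^{-\lambda(t)\langle k,\eta\rangle^\gamma}\langle k,\eta\rangle^{-\sigma}$ from the Morrey embedding $W^{1,d+1}_\eta(\R^d)\hookrightarrow L^\infty_\eta$, valid precisely because $d+1>d$ — exactly why one $\eta$-derivative and the $L^{d+1}$-norm suffice — along with $|\hat E(t,\ell)|\le F[\rho](t,\lambda(t))e^{-\lambda(t)\langle\ell,\ell t\rangle^\gamma}\langle\ell,\ell t\rangle^{-\sigma}|\ell|^{\alpha-1}$ from the Poisson solve; the subadditivity $\langle a+b\rangle^\gamma\le\langle a\rangle^\gamma+\langle b\rangle^\gamma$ combined with $\lambda(t)\le\lambda(s)$ for $s\le t$; and Hölder with exponents $(\tfrac{d+1}{d},d+1)$. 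For $F[\mathcal N]$ one distributes the exponential and polynomial weights so that $\hat E(s,\ell)$ carries $e^{-\lambda(s)\langle\ell,\ell s\rangle^\gamma}$ and $\hat g_\pm$ carries $e^{-\lambda(s)\langle k-\ell,kt-\ell s\rangle^\gamma}$; the derivative loss $|k|(t-s)$ is absorbed by the slack $e^{-[\lambda(s)-\lambda(t)]\langle k-\ell,kt-\ell s\rangle^\gamma}$ left after matching the weights together with the polynomial surplus $\langle k-\ell,kt-\ell s\rangle^{-\sigma}$ of $\hat g_\pm$, which (since $\sigma-1>d$) also leaves integrability in $s$, while for $\ell\ne k$ the frequency $kt-\ell s$ vanishes only at isolated echo times, off which $\hat g_\pm$ is exponentially small and at which the bootstrap decay of $F[\rho]$ supplies the time decay; convergence of the $\ell$-sum uses $|\ell|^{\alpha-1}$ against $|\ell|^{-\alpha}$ and $\sigma>d+1$, with $\alpha<\tfrac1{d+1}$. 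This produces $F[\mathcal N](t,\lambda(t))\lesssim\varepsilon\int_0^t\mathcal K(t,s)F[\rho](s,\lambda(s))G[g(s)](\lambda(s))^{1/(d+1)}\,\d s$ with $\sup_t\int_0^t\mathcal K(t,s)\,\d s<\infty$ and enough decay in $\mathcal K$ that $F[\mathcal N](t,\lambda(t))\lesssim\varepsilon\,\varepsilon_0^{2/(d+1)}e^{-c_0\langle t\rangle}$. For $G[g]$ one differentiates $G[g(t)](\lambda(t))$ in $t$: the term $(d+1)\dot\lambda(t)$ times the derivative-enhanced functional $\sum_{|j|\le1}\sum_k\int\langle k,\eta\rangle^\gamma e^{(d+1)\lambda\langle k,\eta\rangle^\gamma}\bigl[|\partial^j_\eta\hat g_+|^{d+1}+|\partial^j_\eta\hat g_-|^{d+1}\bigr]\langle k,\eta\rangle^{(d+1)\sigma}\,\d\eta$ is $\le0$ and, via Young's inequality with a $t$-dependent coefficient, absorbs the derivative loss $|\eta-tk|\le\langle k,\eta\rangle^\gamma$ coming from $(\nabla_v-t\nabla_x)g_\pm$; what remains is bounded by $C(1+\varepsilon)|\dot\lambda(t)|^{-1/d}\bigl(\sum_{\ell\ne0}|\hat E(t,\ell)|\langle\ell,\ell t\rangle\bigr)^{(d+1)/d}G[g(t)](\lambda(t))$, with $\sum_{\ell\ne0}|\hat E(t,\ell)|\langle\ell,\ell t\rangle\lesssim F[\rho](t,\lambda(t))\lesssim\varepsilon_0^{1/(d+1)}e^{-c_0\langle t\rangle}$, plus an additive contribution $C\varepsilon\,F[\rho](t,\lambda(t))$ to $\tfrac{\d}{\d t}G[g(t)](\lambda(t))^{1/(d+1)}$ from the linear forcing $\mp\varepsilon\,\widehat{E\nabla_v\mu}$ (estimated by (H1) — the $\eta$-integrals converging since $2\lambda_0\le\theta_0/2$ and $\gamma\le1$ — and by the bootstrap decay of $F[\rho]$, the $k$-sum requiring again $\alpha<\tfrac1{d+1}$).

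\emph{Closing the bootstrap and damping.} Integrating the resulting inequality $\tfrac{\d}{\d t}G[g(t)](\lambda(t))^{1/(d+1)}\le |\dot\lambda(t)|^{-1/d}\bigl(\sum_{\ell\ne0}|\hat E(t,\ell)|\langle\ell,\ell t\rangle\bigr)^{(d+1)/d}G[g(t)](\lambda(t))^{1/(d+1)}+C\varepsilon\,F[\rho](t,\lambda(t))$ and applying Grönwall — the exponent $\int_0^\infty|\dot\lambda(s)|^{-1/d}\bigl(\sum_{\ell\ne0}|\hat E(s,\ell)|\langle\ell,\ell s\rangle\bigr)^{(d+1)/d}\,\d s\lesssim\varepsilon_0^{1/d}$ being finite because the growth $|\dot\lambda(s)|^{-1/d}\sim(1+s)^{(1+\delta)/d}$ is dominated by the exponential decay, and $\int_0^\infty F[\rho](s,\lambda(s))\,\d s\lesssim\varepsilon_0^{1/(d+1)}$ likewise — one gets $G[g(t)](\lambda(t))\le\varepsilon_0(1+C\varepsilon)^{d+1}e^{C\varepsilon_0^{1/d}}$, while the density estimate and the bound on $F[\mathcal N]$ give $e^{c_0\langle t\rangle}F[\rho](t,\lambda(t))\le C\varepsilon_0^{1/(d+1)}(1+\varepsilon\,\varepsilon_0^{1/(d+1)})$. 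For $A$ fixed large enough and then $\varepsilon_0$ small enough, these right-hand sides are $<\tfrac12 A\varepsilon_0$ and $<\tfrac12 A\varepsilon_0^{1/(d+1)}$, closing the bootstrap and giving $T^\ast=\infty$. Finally, with $\gamma=1$, $\lambda(t)\ge\lambda_0$ and $\langle k,kt\rangle\ge\langle t\rangle$ for $k\ne0$, the bound on $F[\rho]$ gives $|\hat\rho(t,k)|\lesssim\varepsilon_0^{1/(d+1)}e^{-\frac{\lambda_0}{2}\langle t\rangle}e^{-\frac{\lambda_0}{2}|k|}\langle k,kt\rangle^{-\sigma}|k|^{\alpha}$, so summing over $k\ne0$ and using $|\hat E(t,k)|=|\hat\rho(t,k)|/|k|$ yields exponential-in-time decay of $\rho$ and $E$. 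The main obstacle is the reaction/echo estimate above: at $\gamma=1$ the derivative loss $|\eta-tk|\sim\langle k,\eta\rangle^\gamma$ is exactly critical, and closing it requires simultaneously using that $\lambda(t)$ is strictly decreasing — so that for $s<t$ there is a positive sliver $\lambda(s)-\lambda(t)$ to trade for one derivative — and the polynomial surplus $\sigma-1>d$; re-running this bookkeeping from \cite{GNR} with the $L^{d+1}$-based $G$-functional just means replacing Cauchy–Schwarz/Plancherel by Hölder and the $L^2$ Sobolev embedding by the Morrey embedding $W^{1,d+1}\hookrightarrow L^\infty$.
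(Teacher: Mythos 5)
Your overall architecture — a continuity argument propagating a bound on $G[g(t)](\lambda(t))$ together with a decaying bound on the weighted density norm, the two coupled through a reaction estimate and a Vlasov-type energy estimate — is the same as the paper's, and several of your technical variants (Gr\"onwall absorption of the derivative loss into $\dot\lambda(t)$ times the enhanced functional, rather than the paper's characteristics argument for the transport inequality in $z$; folding the $k-\ell=0$ mode into a perturbed Volterra kernel rather than leaving it in the source as the paper does) are workable alternatives. However, there is one genuine gap: the bootstrap ansatz $e^{c_0\langle t\rangle}F[\rho](t,\lambda(t))\le A\varepsilon_0^{1/(d+1)}$ cannot be propagated through the reaction term, and the step ``enough decay in $\mathcal K$ that $F[\mathcal N](t,\lambda(t))\lesssim\varepsilon\,\varepsilon_0^{2/(d+1)}e^{-c_0\langle t\rangle}$'' is asserted but false. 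After distributing the weight via $e^{\lambda(t)\langle k,kt\rangle}\le e^{\lambda(s)\langle \ell,\ell s\rangle}e^{\lambda(s)\langle k-\ell,kt-\ell s\rangle}e^{(\lambda(t)-\lambda(s))\langle k,kt\rangle}$, the only slack available to convert decay at time $s$ into decay at time $t$ is the factor $e^{(\lambda(t)-\lambda(s))\langle k,kt\rangle}$, and since $\lambda(s)-\lambda(t)\sim\lambda_0\delta(t-s)(1+t)^{-1-\delta}$ and $\langle k,kt\rangle\sim|k|(1+t)$, this factor is only $e^{-c|k|(t-s)(1+t)^{-\delta}}$, which degenerates as $t\to\infty$. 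Concretely, the echo at $s=kt/\ell$ with $\ell=2k$ contributes to the weighted density at time $t$ an amount of order $t\,e^{-ct^{1-\delta}}e^{-c_0 t/2}\varepsilon_0^{2/(d+1)}$, which is much larger than $\varepsilon_0^{1/(d+1)}e^{-c_0 t}$ for large $t$; so the hypothesis $F[\rho](t,\lambda(t))\le A\varepsilon_0^{1/(d+1)}e^{-c_0\langle t\rangle}$ is not reproduced and the continuation argument breaks.

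The repair is exactly the bookkeeping the paper uses (Lemma \ref{L2}, following \cite[eq.~4.23]{GNR}): propagate only \emph{polynomial} decay of the weighted density norm, $F[\rho](t,\lambda(t))\le 4C_1\varepsilon_0^{1/(d+1)}\langle t\rangle^{-\sigma+1}$, for which the kernel estimate $\sup_k\sum_{\ell}\int_0^t C_{k,\ell}(t,s)\langle s\rangle^{-\sigma+1}\,\d s\le C\langle t\rangle^{-\sigma+1}$ does close (the contribution of the echo above is $t\,e^{-ct^{1-\delta}}\langle t\rangle^{-\sigma+1}\le C\langle t\rangle^{-\sigma+1}$), while the exponential-in-time part of the source is carried only by the free-streaming datum $\hat S^0$. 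Nothing is lost downstream: the exponential decay of $\rho$ and $E$ in the conclusion comes from the weight $e^{\lambda(t)\langle k,kt\rangle}\ge e^{\lambda_0\langle t\rangle}$ already present in $F[\rho]$, not from decay of $F[\rho]$ itself. Your $G$-estimate and the final Gr\"onwall step survive this change, since the absorption of the derivative loss $\langle t\rangle F[\rho]\,\partial_zG$ into $|\dot\lambda(t)|\partial_zG\sim(1+t)^{-1-\delta}\partial_zG$ only requires $F[\rho](t,\lambda(t))\lesssim\varepsilon_0^{1/(d+1)}(1+t)^{-2-\delta}$, which the polynomial ansatz supplies because $\sigma>3$ (this is precisely the paper's constraint $\delta<\sigma-3$ in Lemma \ref{L1l1}), and $\int_0^\infty(1+s)^{(1+\delta)/d}\langle s\rangle^{-(\sigma-1)(d+1)/d}\,\d s<\infty$ likewise holds for $\sigma>3$. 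You should also justify, rather than assert, that your perturbed Volterra kernel $\hat\mu(k(t-s))+\hat g_\pm(s,0,k(t-s))$ still has a resolvent with the decay $e^{-\theta_1|k|(t-s)/4}$; the paper avoids this issue entirely by keeping that mode in the source and invoking Remark \ref{R1}.
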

	
	\begin{remark}
		In the one-dimensional scenario where $d=1$, the Gevrey class parameter $\gamma$ in Theorem \ref{mainthm} can be extended to $\gamma \in \left(\frac{1}{3}, 1\right]$. Under these conditions, the primary estimate Lemma \ref{L41} remains valid for $\gamma \in \left(\frac{1}{3},1\right]$, as discussed in \cite{GNR}.
	\end{remark}
	
To the end, the constant \( C \) is mutable and may depend on initial conditions \( f_0 \), the spatial dimension \( d \), and the equilibrium state \( \mu \).
	
	\section{Preliminary}
	\subsection{Equilibria}
	In this section, we firstly establish the persistence of the Penrose stability condition under the presence of a perturbatively small parameter $\varepsilon>0$ for the two-species model.
	
\begin{lemma}\label{H2l}
Consider the two-species Vlasov-Poisson system as described by Equation (\ref{eq}) with $0<\varepsilon<\alpha_0$, where $\alpha_0>0$ is a constant such that
 $\theta_0^2 > \frac{2\alpha_0C_{\mu}}{\kappa_0}$.
 Then, for every $\alpha \in [0, \alpha_0]$, the following inequality holds:
		\begin{equation}
			\label{H3}
			\inf_{k \in \mathbb{Z}^d \setminus {0}; \Re(\lambda) \geq 0} \left| 1 + (\alpha + 1) \int_0^\infty e^{-\lambda t} t \hat{\mu}(kt)\,\d t \right| \geq \frac{\kappa_0}{2} > 0.
		\end{equation}
	\end{lemma}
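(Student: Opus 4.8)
The plan is to regard the factor $(\alpha+1)$ as a small perturbation of the Penrose functional appearing in (\ref{H2}) and reduce everything to an elementary bound on the integral. Set
\[
I(k,\lambda):=\int_0^\infty e^{-\lambda t}\,t\,\hat\mu(kt)\,\d t ,
\]
so that the quantity to be estimated is $1+(\alpha+1)I(k,\lambda)$. First I would use the algebraic identity $1+(\alpha+1)I=(1+I)+\alpha I$ together with the triangle inequality to get
\[
\abs{1+(\alpha+1)I(k,\lambda)}\ \geq\ \abs{1+I(k,\lambda)}-\alpha\,\abs{I(k,\lambda)}\ \geq\ \kappa_0-\alpha\,\abs{I(k,\lambda)},
\]
where the last step is precisely hypothesis (\ref{H2}). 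Thus the whole lemma is reduced to a uniform-in-$(k,\lambda)$ upper bound for $\abs{I(k,\lambda)}$.

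For that bound, I would use that $\Re\lambda\geq 0$ forces $\abs{e^{-\lambda t}}\leq 1$ on $t\geq 0$, and that condition (\ref{H1}) (its zeroth-order term, $\abs{j}=0$) gives $\abs{\hat\mu(kt)}\leq C_\mu e^{-\theta_0\abs{k}t}$ for $t\geq0$. Hence
\[
\abs{I(k,\lambda)}\ \leq\ C_\mu\int_0^\infty t\,e^{-\theta_0\abs{k}t}\,\d t\ =\ \frac{C_\mu}{\theta_0^{2}\abs{k}^{2}}\ \leq\ \frac{C_\mu}{\theta_0^{2}},
\]
the last inequality because $\abs{k}\geq 1$ for $k\in\Z^d\setminus\{0\}$. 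Plugging this into the previous display yields $\abs{1+(\alpha+1)I(k,\lambda)}\geq \kappa_0-\alpha C_\mu/\theta_0^{2}\geq \kappa_0-\alpha_0 C_\mu/\theta_0^{2}$ for every $\alpha\in[0,\alpha_0]$. Finally, the assumed smallness $\theta_0^{2}>2\alpha_0C_\mu/\kappa_0$ is exactly $\alpha_0 C_\mu/\theta_0^{2}<\kappa_0/2$, so the right-hand side is strictly larger than $\kappa_0/2$; taking the infimum over $k\in\Z^d\setminus\{0\}$ and over $\lambda$ with $\Re\lambda\geq0$ gives (\ref{H3}). In applications one simply takes $\alpha=\varepsilon$, which is why the hypothesis is stated with $0<\varepsilon<\alpha_0$.

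I do not expect any real obstacle here: once the decomposition $(1+I)+\alpha I$ is written down, the proof is a two-line perturbation estimate. The only point requiring a little care is the bookkeeping of constants — making sure the $C_\mu$ and $\theta_0$ used to bound $\abs{I}$ are exactly the ones furnished by (\ref{H1}), and observing that evaluating the Fourier decay at $\eta=kt$ produces precisely the factor $\abs{k}^{-2}$ that renders the $t$-integral finite and $k$-uniform. No contour deformation or detailed analysis of the dispersion function is needed, since we only bound $\abs{I}$ rather than locate its zeros.
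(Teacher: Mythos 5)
Your proof is correct and follows essentially the same route as the paper: the decomposition $1+(\alpha+1)I=(1+I)+\alpha I$, the triangle inequality together with (\ref{H2}), and the uniform bound $\abs{I}\leq C_\mu/\theta_0^2$ obtained from $\abs{e^{-\lambda t}}\leq 1$, the decay in (\ref{H1}), and $\abs{k}\geq 1$ are exactly the steps in the paper's argument.
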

	\begin{proof}
		To validate the lemma, we direct our focus towards the integral term in Equation (\ref{H3}). Given that $\Re(\lambda) \geqslant 0$ and $k \in \mathbb{Z}^d \setminus {0}$, and recalling Equation (\ref{H1}), we have
		\begin{equation*}
			\begin{aligned}
				\left| \int_0^\infty e^{-\lambda t} t \hat{\mu}(kt)\,\d t \right| &\leq C_{\mu} \int_0^\infty |e^{-\lambda t}| t e^{-\theta_0 |k| t}\,\d t \\
				&\leq C_{\mu} \int_0^\infty t e^{-\theta_0 |k| t} \d t \\
				&= \frac{C_{\mu}}{\theta_0^2 |k|^2} \leqslant \frac{C_{\mu}}{\theta_0^2},
			\end{aligned}
		\end{equation*}
		where we have utilized the decay properties of the exponential function and the integrability of $t e^{-\theta_0 |k| t}$ over $[0, \infty)$. Subsequently, for $\alpha \in [0, \alpha_0]$, we deduce that
		\begin{equation*}
			\begin{aligned}
				& \left| 1 + (\alpha + 1) \int_0^\infty e^{-\lambda t} t \hat{\mu}(kt)\,\d t \right| \\
				 \geqslant & \left| 1 + \int_0^\infty e^{-\lambda t} t \hat{\mu}(kt) \,\d t  \right| - \alpha_0 \left| \int_0^\infty e^{-\lambda t} t \hat{\mu}(kt) \,\d t  \right|\\
				\geqslant & \kappa_0 - \frac{\alpha_0 C_{\mu}}{\theta_0^2} \geq \frac{\kappa_0}{2},
			\end{aligned}
		\end{equation*}
		thus confirming the stability condition \eqref{H3} and completing the proof of  Lemma \ref{H2l}. 
	\end{proof}
	
	\subsection{Properties of generator functions}
	We define the family of functions 
	$$
	A_{k,\eta}=e^{z\jbr{k,\eta}^\gamma} \jbr{k,\eta}^\sigma.
	$$
	This allows us to express the generator functions \eqref{Fdef} and \eqref{Gdef} as
	\[F[\rho](t,z)=\sup_{k\in \Z\setminus\{0\}}A_{k,\eta} |\hat{\rho}(t,k)| |k|^{-\alpha},\]
	and
	\[G[g(t)](z)=\sum_{|j|\leqslant 1}\sum_{k\in\Z^d}\int_{\R^d} A^{d+1}_{k,\eta}\left[ \left|\partial^j_\eta \hat{g}_+(k,\eta)\right|^{d+1}+\left|\partial^j_\eta \hat{g}_-(k,\eta)\right|^{d+1} \right] \d \eta,\]
	respectively. To the end we denote $\widehat{Ag}(t,k,\eta)=A_{k,\eta}\hat{g}(t,k,\eta)$.
	
	\begin{lemma}
		\label{rhog}
		For any $t\geqslant 0$ and $k\in \Z^d$,
		\begin{equation}
			\label{rhotk}
			\hat{\rho}_\pm(t,k)=\hat{g}_\pm(t,k,kt).
		\end{equation}
	\end{lemma}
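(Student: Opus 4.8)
The claim to prove is Lemma \ref{rhog}: that $\hat\rho_\pm(t,k)=\hat g_\pm(t,k,kt)$.

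\medskip

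The plan is to unwind the definitions. Recall that $g_\pm(t,x,v)=f_\pm(t,x+vt,v)$, so equivalently $f_\pm(t,x,v)=g_\pm(t,x-vt,v)$, and by definition $\rho_\pm(t,x)=\int_{\R^d} f_\pm(t,x,v)\,\d v = \int_{\R^d} g_\pm(t,x-vt,v)\,\d v$, which is exactly the last line of \eqref{eq}. First I would write out $\hat\rho_\pm(t,k)=\int_{\T^d}\rho_\pm(t,x)e^{-ik\cdot x}\,\d x = \int_{\T^d}\int_{\R^d} g_\pm(t,x-vt,v)\,\d v\, e^{-ik\cdot x}\,\d x$. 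Then I would swap the order of integration (justified by the regularity/decay of $g_\pm$, which lives in the Gevrey class encoded by finiteness of $G[g]$) and, for fixed $v$, substitute $y=x-vt$ in the torus integral; since translation is a measure-preserving bijection of $\T^d$ and $e^{-ik\cdot x}=e^{-ik\cdot y}e^{-ik\cdot vt}$, this yields $\hat\rho_\pm(t,k)=\int_{\R^d}\Big(\int_{\T^d} g_\pm(t,y,v)e^{-ik\cdot y}\,\d y\Big) e^{-i(kt)\cdot v}\,\d v$.

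\medskip

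The inner integral over $\T^d$ together with the outer integral over $\R^d$ against $e^{-i(kt)\cdot v}$ is precisely the full $(x,v)$-Fourier transform of $g_\pm(t,\cdot,\cdot)$ evaluated at frequency $(k,\eta)$ with $\eta=kt$, i.e. $\hat g_\pm(t,k,kt)$, by the definition of the Fourier transform in the Notation subsection. This gives the identity. The only point requiring a word of care is the interchange of integrals, but this is routine given the assumed regularity of $g$; there is no real obstacle here — the lemma is essentially a bookkeeping identity relating the density in the original frame to the value of the distribution's Fourier transform along the characteristic ray $\eta=kt$, and it is the two-species analogue of the standard one-species fact.
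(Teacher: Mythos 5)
Your proof is correct and follows essentially the same route as the paper: writing out $\hat\rho_\pm(t,k)$ via the definition $\rho_\pm=\int_{\R^d}g_\pm(t,x-vt,v)\,\d v$, splitting $e^{-ik\cdot x}=e^{-ik\cdot(x-vt)}e^{-i(kt)\cdot v}$, and recognizing the full $(x,v)$-Fourier transform of $g_\pm$ at $(k,kt)$. The extra remarks on Fubini and the measure-preserving translation on $\T^d$ are fine but not needed beyond what the paper records.
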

	\begin{proof}
		Utilizing the definition of the Fourier transform, we find
		\begin{equation*}
			\begin{aligned}
				\hat{\rho}_\pm(t,k) &= \int_{\T^d} \rho_\pm (t,x)e^{-ikx} \d x\\
				&=\int_{\T^d}\int_{\R^d}g_\pm(t,x-vt,v)e^{-ikx}\d v\d x \\
				&=\int_{\T^d}\int_{\R^d}g_\pm(t,x-vt,v)e^{-ik(x-vt)-ivkt}\d v\d x\\
				&=\hat{g}_\pm (t,k,kt),
			\end{aligned}
		\end{equation*}
		which gives \eqref{rhotk}.
	\end{proof}
	
	\begin{lemma}
		\label{FC0G}
		Let $\lambda_1$ be as defined in (\ref{Ginitial}). There exists a constant $C_0$, depending on $\lambda_1$, such that for any $z \in [0, \lambda_1]$,
		\begin{equation}
			\label{FG}
			F[\rho](z) \leqslant C_0 \G{g(t)}^{\frac{1}{d+1}}(z).
		\end{equation}
	\end{lemma}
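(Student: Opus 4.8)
The plan is to read off \eqref{FG} from a weighted Sobolev (Morrey) embedding, exploiting that the exponent $d+1$ is strictly above the dimension $d$.

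First I would use Lemma \ref{rhog} to write $\hrho(t,k)=\hg_+(t,k,kt)-\hg_-(t,k,kt)$. Since $k\neq 0$ gives $|k|^{-\alpha}\leqslant 1$, and $(a+b)^{d+1}\leqslant 2^d(a^{d+1}+b^{d+1})$, it then suffices to show that for each $k\in\Z^d\setminus\{0\}$ and each sign,
\[
\bigl(A_{k,kt}\,|\hg_\pm(t,k,kt)|\bigr)^{d+1}\leqslant C\,\G{g(t)}(z),
\]
with $C=C(d,\sigma,\lambda_1)$, and then take the supremum over $k$ and the $(d+1)$-th root. We may of course assume $\G{g(t)}(z)<\infty$, which forces $\hg_\pm(t,k,\cdot)\in W^{1,d+1}(\R^d)$ for every fixed $t,k$, so these functions have continuous representatives and the pointwise evaluation at $\eta=kt$ is meaningful.

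Next comes the core step. Because $d+1>d$, Morrey's inequality yields the embedding $W^{1,d+1}(B_1)\hookrightarrow L^\infty(B_1)$ on the unit ball of $\R^d$ with a constant depending only on $d$; applied by translation to $\hg_\pm(t,k,\cdot)$ around $\eta=kt$ it gives
\[
|\hg_\pm(t,k,kt)|^{d+1}\leqslant C_d\int_{|\eta-kt|\leqslant 1}\Bigl(|\hg_\pm(t,k,\eta)|^{d+1}+|\nabla_\eta\hg_\pm(t,k,\eta)|^{d+1}\Bigr)\,\d\eta.
\]
I would then move the weight inside: for $|\eta-kt|\leqslant 1$, since $\eta\mapsto\jbr{k,\eta}$ is $1$-Lipschitz and bounded below by $1$ we have $\jbr{k,kt}\leqslant 2\jbr{k,\eta}$, and since $s\mapsto s^\gamma$ has derivative at most $1$ on $[1,\infty)$ for $\gamma\in(0,1]$ we have $\jbr{k,kt}^\gamma\leqslant\jbr{k,\eta}^\gamma+1$; hence $A_{k,kt}\leqslant e^{z}\,2^\sigma A_{k,\eta}\leqslant e^{\lambda_1}2^\sigma A_{k,\eta}$ whenever $z\in[0,\lambda_1]$. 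Multiplying the Morrey estimate by $A_{k,kt}^{d+1}$, replacing it by $(e^{\lambda_1}2^\sigma A_{k,\eta})^{d+1}$ under the integral, bounding $|\nabla_\eta\hg_\pm|^{d+1}$ by a dimensional multiple of $\sum_{|j|=1}|\partial^j_\eta\hg_\pm|^{d+1}$, enlarging the domain from the unit ball to $\R^d$ and adding back the remaining nonnegative terms of the sum $\sum_{k\in\Z^d}$, the right-hand side is bounded, uniformly in $k$ and in the sign, by a constant times $\G{g(t)}(z)$. This is exactly the displayed inequality above, so \eqref{FG} follows with $C_0=C_0(\lambda_1)$.

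I do not anticipate a genuine obstacle: this is a weighted Sobolev embedding. The two points that need care are that Morrey's embedding is used at its borderline — it requires the integrability exponent to be strictly larger than the dimension, which is precisely why $G$ in \eqref{Gdef} is built on $L^{d+1}$ with only one $\eta$-derivative — and that the comparison $e^{z\jbr{k,kt}^\gamma}\lesssim e^{z\jbr{k,\eta}^\gamma}$ over the unit ball is uniform only for $z$ in a bounded interval, which is what makes $C_0$ depend on $\lambda_1$.
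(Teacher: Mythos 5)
Your proposal is correct and follows essentially the same route as the paper: both hinge on the Sobolev embedding $W^{1,d+1}_\eta \hookrightarrow L^\infty_\eta$, available precisely because the exponent $d+1$ exceeds the dimension $d$ of the $\eta$-variable. The only (immaterial) difference is that the paper applies the embedding globally to the weighted function $\widehat{Ag}_\pm$ and absorbs derivatives of the weight via $\left|\partial_\eta^j A_{k,\eta}\right|\leqslant \bar C(j)A_{k,\eta}$, whereas you apply Morrey's inequality locally to $\hat g_\pm$ on a unit ball around $\eta=kt$ and then compare $A_{k,kt}$ with $A_{k,\eta}$ there.
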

	
	\begin{proof}
		We initiate our proof by recalling the bounds established by \cite[eq. (2.10)]{BMM} and \cite[eq. (2.4)]{GI}, which state that for each $z \in [0, \lambda_1]$,
		\begin{equation}
			\label{i1}
			\left|\partial_\eta^j A_{k,\eta}\right| \leqslant \bar{C}(j) \frac{1}{\langle k,\eta \rangle^{|j|(1-\gamma)}} A_{k,\eta} \leqslant \bar{C}(j) A_{k,\eta}.
		\end{equation}
		By Lemma \ref{rhog}, the Fourier transform of $\rho(t)$, denoted $\hat{\rho}(t,k)$, is expressed as $\hat{\rho}(t,k) = \hat{g}_+(t,k,kt) - \hat{g}_-(t,k,kt)$. Consequently,
		\begin{equation}
			\label{i2}
			\begin{aligned}
				A_{k,kt} \left|\hat{\rho}(t,k)\right| \left|k\right|^{-\alpha}  &\leqslant A_{k,kt} \left|\hat{g}_+(t,k,kt) - \hat{g}_-(t,k,kt)\right| \left|k\right|^{-\alpha} \\
				&\leqslant \sup_{\eta} A_{k,\eta} \left|\hat{g}_+(t,k,\eta) - \hat{g}_-(t,k,\eta)\right| \\
				&\leqslant \left\|\widehat{A g}_+(t,k,\eta) - \widehat{A g}_-(t,k,\eta)\right\|_{W^{1,d+1}_\eta}.
			\end{aligned}
		\end{equation}
		Here, in the final inequality we have employed the $L^\infty$ Sobolev embedding.
		It then follows from \eqref{i2} that
		\begin{equation}
			\label{i3}
			A_{k,kt} \left|\hat{\rho}(t,k)\right| \left|k\right|^{-\alpha}
			\leqslant \sum_{|j| \leqslant 1} \left(\norm{\partial_\eta^j \widehat{A g}_+(t,k,\eta)}_{L^{d+1}_\eta} + \norm{\partial_\eta^j \widehat{A g}_-(t,k,\eta)}_{L^{d+1}_\eta}\right).
		\end{equation}
		Note 
		$$
		\sum_{|j|\leqslant 1}\norm{\partial_\eta^j\hAg_\pm(t,k,\eta)}_{L^{d+1}_\eta}
				\leqslant C\left(\sum_{|j|\leqslant 1}\norm{\partial_\eta^j\hAg_\pm(t,k,\eta)}_{L^{d+1}_\eta}^{d+1}\right)^{\frac{1}{d+1}},
		$$
		and 
		\begin{equation*}
			\begin{aligned}
			&\sum_{|j|\leqslant 1}\norm{\partial_\eta^j\hAg_\pm(t,k,\eta)}_{L^{d+1}_\eta}^{d+1}\\
				&\leqslant \int_{\R^d}\abs{\hAg_\pm(t,k,\eta)}^{d+1}\d \eta +\sum_{|j|= 1}\int_{\R^d}\abs{\left(\partial_\eta^j A_{k,\eta}\right)\hg_\pm(t,k,\eta)}^{d+1}\d \eta \\
				&\qquad+ \sum_{|j|= 1}\int_{\R^d}\abs{A_{k,\eta}\partial_\eta^j\hg_\pm(t,k,\eta)}^{d+1}\d \eta.
			\end{aligned}
		\end{equation*}
		Hence, utilizing (\ref{i1}), the above estimates further imply that		
		\begin{equation}
			\label{i4}
			\begin{aligned}
				& \sum_{|j|\leqslant 1}\norm{\partial_\eta^j\hAg_\pm(t,k,\eta)}_{L^{d+1}_\eta}\\
				\leqslant & C\left(\int_{\R^d}\abs{\hAg_\pm(t,k,\eta)}^{d+1}\d \eta+\sum_{|j|= 1}\int_{\R^d}\abs{A_{k,\eta}\partial_\eta^j\hg_\pm(t,k,\eta)}^{d+1}\d \eta\right)^{\frac{1}{d+1}}\\
				\leqslant & C\G{g(t)}^{\frac{1}{d+1}}(z).
			\end{aligned}
		\end{equation}
		By combining (\ref{i2}), (\ref{i3}), and (\ref{i4}), we affirm the validity of inequality (\ref{FG}).
		\end{proof}
	
		\begin{coro}
		\label{re23}
		By the last inequality of (\ref{i2}) and (\ref{i3}, \ref{i4}), we have for any $k\in\Z^d, t\geqslant 0$,
		\[\sup_{\eta}A_{k,\eta}\left|\hat{g}_+(t,k,\eta)-\hat{g}_-(t,k,\eta)\right|\leqslant CG[g(t)]^{\frac{1}{d+1}}(z).\] 
		\end{coro}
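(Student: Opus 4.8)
This statement is an immediate corollary of the estimates already assembled in the proof of Lemma~\ref{FC0G}, so the plan is merely to isolate the relevant links of that chain before the density is extracted by evaluating at $\eta = kt$. First I would start from the last inequality of \eqref{i2}, which reads
\[
\sup_\eta A_{k,\eta}\left|\hg_+(t,k,\eta)-\hg_-(t,k,\eta)\right|\leqslant \norm{\hAg_+(t,k,\cdot)-\hAg_-(t,k,\cdot)}_{W^{1,d+1}_\eta};
\]
this is nothing but the $L^\infty_\eta$ Sobolev embedding on $\R^d$ applied to the single Fourier mode $\hAg_+(t,k,\cdot)-\hAg_-(t,k,\cdot)$, which is legitimate precisely because $d+1>d$, with an embedding constant uniform in $k$ and $t$.

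Next I would split the two species by the triangle inequality in $W^{1,d+1}_\eta$, which is exactly the passage recorded in \eqref{i3}, obtaining
\[
\sup_\eta A_{k,\eta}\left|\hg_+(t,k,\eta)-\hg_-(t,k,\eta)\right|\leqslant \sum_{|j|\leqslant 1}\left(\norm{\partial_\eta^j \hAg_+(t,k,\eta)}_{L^{d+1}_\eta}+\norm{\partial_\eta^j \hAg_-(t,k,\eta)}_{L^{d+1}_\eta}\right).
\]
Finally, each summand on the right is bounded by $C\,\G{g(t)}^{\frac{1}{d+1}}(z)$ via \eqref{i4}; recall that the proof of \eqref{i4} uses the elementary interpolation $\sum_{|j|\le 1}\norm{\cdot}_{L^{d+1}}\le C\big(\sum_{|j|\le 1}\norm{\cdot}_{L^{d+1}}^{d+1}\big)^{\frac{1}{d+1}}$ together with the derivative bound \eqref{i1}, which lets one transfer the $\eta$-derivative from $A_{k,\eta}$ onto $\hg_\pm$ at the cost of a harmless factor $\jbr{k,\eta}^{-|j|(1-\gamma)}\leqslant 1$. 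Chaining the three displays yields the claimed bound.

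There is no real obstacle: every ingredient — the Sobolev embedding, the triangle inequality, and the reduction \eqref{i4} to the $G$-functional — has already been carried out in the proof of Lemma~\ref{FC0G}, and the corollary just reads off the intermediate estimate one step before specializing to $\hrho(t,k)=\hg_+(t,k,kt)-\hg_-(t,k,kt)$. The only point to keep track of is that the constant $C$ inherits the dependence on $\lambda_1$ coming from \eqref{i1} (valid for $z\in[0,\lambda_1]$) but is otherwise independent of $k$ and $t$, so the bound holds uniformly in $k\in\Z^d$ and $t\geqslant 0$ as stated.
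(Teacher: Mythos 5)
Your proposal is correct and follows exactly the paper's own route: the corollary is obtained by chaining the Sobolev embedding in the last inequality of (\ref{i2}) with the splitting (\ref{i3}) and the reduction (\ref{i4}) to the $G$-functional, stopping just before the evaluation at $\eta=kt$. Your remark on the uniformity of the constant in $k$ and $t$ (with dependence only on $\lambda_1$ through (\ref{i1})) is also consistent with the paper.
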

	
	\begin{coro}
		\label{re24}
		In Corollary \ref{re23}, let $t=0$, then note that $g(0,x,v)=f(0,x,v)$, and thus it holds that for any $k\in \Z^d$,
		\begin{equation*}
			\label{g0kkt}
			\begin{aligned}
				A_{k,kt}\left|\hat{g}_+(0,k,kt)-\hat{g}_-(0,k,kt)\right| &\leqslant \sup_{\eta}A_{k,\eta}\left|\hat{g}_+(0,k,\eta)-\hat{g}_-(0,k,\eta)\right|\\
				&\leqslant CG[f^0]^{\frac{1}{d+1}} (z).
			\end{aligned}
		\end{equation*}
		Moreover, since $F[\rho](t,z)$ is monotone increasing in $z$, assuming that $\mu$ and $f^0$ satisfy the assumptions in Theorem \ref{mainthm}, we have for $z\in [0,\lambda_1]$,
		\begin{equation*}
			\label{Frho0}
			F[\rho](0,z)\leqslant F[\rho](0,\lambda_1)\leqslant CG[f^0]^{\frac{1}{d+1}}(z)\leqslant C\varepsilon_0^\frac{1}{d+1}.
		\end{equation*}
	\end{coro}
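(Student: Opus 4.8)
The plan is to obtain the statement as an immediate consequence of Corollary~\ref{re23} (equivalently Lemma~\ref{FC0G}) together with the definitions \eqref{Fdef}--\eqref{Gdef} and the monotonicity of the weight $A_{k,\eta}$ in the analyticity radius $z$; essentially no new estimate is required, so the work is bookkeeping.

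First I would record the identification at $t=0$. Since by definition $g(t,x,v)=f(t,x+vt,v)$, putting $t=0$ gives $g(0,\cdot,\cdot)=f^0$, hence $\hg_\pm(0,k,\eta)=\hf^0_\pm(k,\eta)$ and $\G{g(0)}(z)=\G{f^0}(z)$ for every $z\geqslant 0$. Specializing Corollary~\ref{re23} to $t=0$ then gives, for all $k\in\Z^d$ and $z\in[0,\lambda_1]$,
\[
\sup_{\eta}A_{k,\eta}\bigl|\hg_+(0,k,\eta)-\hg_-(0,k,\eta)\bigr|\leqslant C\,\G{f^0}^{\frac{1}{d+1}}(z).
\]
The first inequality asserted in the corollary is then trivial: at $t=0$ one has $kt=0$, so the left-hand side $A_{k,kt}|\hg_+(0,k,kt)-\hg_-(0,k,kt)|$ is simply the value at $\eta=0$ of the function whose supremum over $\eta$ appears above, hence is bounded by it.

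Next, for the bound on $F[\rho]$, I would apply Lemma~\ref{FC0G} at $t=0$ directly to get $F[\rho](0,z)\leqslant C_0\,\G{g(0)}^{\frac{1}{d+1}}(z)=C_0\,\G{f^0}^{\frac{1}{d+1}}(z)$ for $z\in[0,\lambda_1]$ (alternatively, expand \eqref{Fdef} using Lemma~\ref{rhog}, discard the harmless factor $|k|^{-\alpha}\leqslant 1$ since $k\neq 0$, and use the inequality above). Finally, because $A_{k,\eta}=e^{z\jbr{k,\eta}^\gamma}\jbr{k,\eta}^\sigma$ is nondecreasing in $z$, both $F[\rho](0,\cdot)$ and $\G{f^0}(\cdot)$ are nondecreasing, so for $z\in[0,\lambda_1]$,
\[
F[\rho](0,z)\leqslant F[\rho](0,\lambda_1)\leqslant C\,\G{f^0}^{\frac{1}{d+1}}(\lambda_1)\leqslant C\varepsilon_0^{\frac{1}{d+1}},
\]
the last inequality being the smallness hypothesis \eqref{Ginitial} of Theorem~\ref{mainthm}. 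There is no genuine obstacle in this argument; the only points worth stating carefully are the identity $g|_{t=0}=f^0$ (so that the generator functional evaluated on the characteristics-shifted data coincides with $G[f^0]$) and the fact that the constant inherited from Corollary~\ref{re23}/Lemma~\ref{FC0G} is uniform in $t$, so taking $t=0$ is legitimate.
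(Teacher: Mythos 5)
Your proposal is correct and follows essentially the same route as the paper, which treats this corollary as an immediate consequence of Corollary \ref{re23} at $t=0$ (using $g(0,\cdot,\cdot)=f^0$), Lemma \ref{FC0G}, the monotonicity of $F[\rho](0,\cdot)$ in $z$, and the smallness hypothesis \eqref{Ginitial}. If anything, your version is slightly more careful: the intermediate bound should indeed read $F[\rho](0,\lambda_1)\leqslant C\,\G{f^0}^{\frac{1}{d+1}}(\lambda_1)$ (evaluated at $\lambda_1$, as you write), rather than at $z$ as the paper's display literally states.
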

	
	\section{Linear Landau damping}
	In this section, we prove the linear landau damping of the system (\ref{eq}) based on the generator functions. The study via resolvent estimates is classical, see \cite{GNR,Degond,GS,HNR,LZ}.
	
	\subsection{Equation on the density}
	We introduce the linear Vlasov-Poisson system around the equilibrium $\mu(v)$ as described in equation (\ref{lineareq}):
	\begin{equation}
		\label{lineareqs3}
		\left\{
		\begin{aligned}
			&\partial_t g_+(t,x,v)+\varepsilon E(t,x+vt)\nabla_v\mu(v)=0, \\
			&\partial_t g_-(t,x,v)- E(t,x+vt)\nabla_v\mu(v)=0,\\
			& \nabla\cdot E=\rho_+-\rho_-=\rho,\\
			& \rho_\pm=\int_{\R^d} g_\pm (t,x-vt,v)\d v.
		\end{aligned}
		\right.
	\end{equation}
	
	To solve (\ref{lineareqs3}), we follow the standard strategy and first derive a closed equation on the electric field. Let $\hat{\rho}_\pm(t,k)$ be the Fourier transform of $\rho_\pm(t,x)$ in $x$, and $\hat{g}_\pm(t,k,\eta)$ be the Fourier transform of $g(t,x,v)$ in $x$ and $v$. Note that as $\hat{\rho}_\pm(t,0)=0$ for all times, throughout this article, we shall only focus on the case when $k\neq 0$. We have the following lemma.
	
	\begin{lemma}
		Let $g_\pm$ be the unique solution to the linear problem (\ref{lineareqs3}). There holds the following closed equation on the density
		\begin{equation}
			\label{lde1}
			\hat{\rho}_+(t,k)+\varepsilon\hat{\rho}_+(t,k)*_t t\hat{\mu}(kt)-\varepsilon \hat{\rho}_-(t,k)*_t t\hat{\mu}(kt)=\hat{S}_+(t,k),
		\end{equation}
		\begin{equation}
			\label{lde2}
			\hat{\rho}_-(t,k)-\hat{\rho}_+(t,k)*_t t\hat{\mu}(kt)+ \hat{\rho}_-(t,k)*_t t\hat{\mu}(kt)=\hat{S}_-(t,k),
		\end{equation}
		with the source term
		\[\hat{S}_+(t,k)=\hat{f}^0_+(k,kt) \ \ \hat{S}_-(t,k)=\hat{f}^0_-(k,kt).\]
	\end{lemma}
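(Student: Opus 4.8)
The plan is to Fourier-transform each kinetic equation in both $x$ and $v$, solve the resulting linear ODE in time for $\hg_\pm(t,k,\eta)$ with $(k,\eta)$ held as parameters, and then specialize to the characteristic frequency $\eta=kt$, invoking Lemma \ref{rhog}. The only nontrivial computation is the Fourier transform of the transport term $E(t,x+vt)\nabla_v\mu(v)$: writing $E(t,x+vt)=\frac{1}{(2\pi)^d}\sum_{\ell\in\Z^d}\hE(t,\ell)e^{i\ell\cdot(x+vt)}$ and integrating against $e^{-ik\cdot x}e^{-i\eta\cdot v}$, the $x$-integral over $\T^d$ collapses the sum to $\ell=k$, while the surviving $v$-integral produces a shift of the velocity frequency by $kt$. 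One obtains $\hE(t,k)\cdot\hnmu(\eta-kt)$, where $\hnmu(\zeta)=i\zeta\hmu(\zeta)$. Thus (\ref{lineareqs3}) reduces, for each fixed $k\neq 0$ and $\eta$, to the scalar linear ODEs
\[
\partial_t\hg_+(t,k,\eta)=-\varepsilon\, i(\eta-kt)\hmu(\eta-kt)\cdot\hE(t,k),\qquad
\partial_t\hg_-(t,k,\eta)=i(\eta-kt)\hmu(\eta-kt)\cdot\hE(t,k).
\]

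Next I would integrate both ODEs from $0$ to $t$ and set $\eta=kt$. By Lemma \ref{rhog} the left-hand sides become $\hrho_\pm(t,k)$; the initial terms become $\hg_\pm(0,k,kt)=\hf^0_\pm(k,kt)=\hS_\pm(t,k)$ using $g_\pm(0,\cdot,\cdot)=f^0_\pm$; and inside the time integrals the argument $\eta-ks$ becomes $k(t-s)$. Then I would substitute the Poisson relation: from $E=-\nabla_x\phi$ and $-\Delta_x\phi=\rho$ one has $\hE(s,k)=-ik\,\hrho(s,k)/|k|^2$ for $k\neq0$, whence $ik(t-s)\cdot\hE(s,k)=(t-s)\hrho(s,k)$, the factor $i$ and the factor $|k|^2$ cancelling exactly. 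This converts each time integral into the convolution $\hrho*_t t\hmu(kt)$. Finally, writing $\hrho=\hrho_+-\hrho_-$ and rearranging the terms yields precisely (\ref{lde1}) and (\ref{lde2}).

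The argument is essentially bookkeeping. The main point requiring care is the Fourier transform of the transport term — correctly producing the velocity-frequency shift $\eta\mapsto\eta-kt$ and keeping track of the sign and normalization conventions — together with the cancellation of the imaginary units coming from $\hE=-ik\hphi$ and from $\hnmu=i\zeta\hmu$, and of the factor $|k|^2$ coming from $-\Delta_x$, when the Poisson relation is inserted. Differentiation under the integral sign and the fact that the integrated identity is equivalent to the ODE are routine in view of the regularity of the unique solution asserted in the hypothesis, and $\hrho_\pm(t,0)=0$ lets us restrict throughout to $k\neq0$.
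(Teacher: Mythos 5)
Your proposal is correct and follows essentially the same route as the paper: Fourier transform in $x$ and $v$ (obtaining $\hE(t,k)\hnmu(\eta-kt)=i(\eta-kt)\hE(t,k)\hmu(\eta-kt)$ for the transport term), integrate the resulting ODE in time, set $\eta=kt$, and invoke Lemma \ref{rhog} together with the Poisson relation $ik\cdot\hE(s,k)=\hrho(s,k)$. The only cosmetic difference is that you compute the transport term by expanding $E$ in a Fourier series while the paper changes variables $x\mapsto x+vt$; the bookkeeping and conclusion are identical.
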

	\begin{proof}
		Take Fourier transform of $E(t,x+vt)\nabla_v\mu(v)$ to get
		\begin{equation}
			\label{emu}
			\begin{aligned}
				& \int_{\T^d}\int_{\R^d}e^{-ikx-i\eta v}E(t,x+vt)\nabla_v\mu(v)\d v\d x\\
				= & \int_{\T^d}\int_{\R^d} e^{-ik(x+vt)-iv(\eta-kt)}E(t,x+vt)\nabla_v\mu(v)\d (x+vt)\d v\\
				= & \hat{E}(t,k)\widehat{\nabla_v \mu}(\eta-kt)\\
				=& i(\eta-kt) \hat{E}(t,k)\hat{\mu}(\eta-kt).
			\end{aligned}
		\end{equation}
		Using (\ref{emu}), the Fourier transform of the first and second equations in (\ref{lineareqs3}) gives
		\begin{equation}
			\label{gemu1}
			\partial_t \hat{g}_+(t,k,\eta)+\varepsilon i(\eta-kt)\hat{E}(t,k)\hat{\mu}(\eta-kt)=0,
		\end{equation}
		\begin{equation}
			\label{gemu2}
			\partial_t \hat{g}_-(t,k,\eta)- i(\eta-kt)\hat{E}(t,k)\hat{\mu}(\eta-kt)=0.
		\end{equation}
		Let's integrate equation (\ref{gemu1}) with respect to $t$ to obtain
		\begin{equation*}
			\hat{g}_+(t,k,\eta)-\hat{g}_+(0,k,\eta)+\varepsilon \int_{0}^{t}i(\eta-ks)\hat{E}(s,k)\hat{\mu}(\eta-ks) \d s=0.
		\end{equation*}
		Let $\eta=kt$. Since $\hg_+(0,k,kt)=\hat{f}_+^0(k,kt)=\hS_+(t,k)$, we have
		\[\hg_+(t,k,kt)+\varepsilon \int_0^t(t-s)ik\hE(s,k)\hmu(kt-ks)\d s=\hS_+(t,k).\]
		Then by (\ref{rhotk}) and $ik\hE(s,k)=\hrho(s,k)$ (by the third equation of (\ref{lineareqs3})), we  deduce (\ref{lde1}). By applying the same integration procedure to equation (\ref{gemu2}), we can deduce equation (\ref{lde2}).
	\end{proof}
	
	\subsection{Resolvent estimates}
	In this section we introduce the Penrose condition in order to solve (\ref{lineareqs3}). For any function $F$ in Lebesgue space $L^2(\R_+)$, we recall that the Laplace transform of $F(t)$ is defined by
	\[\L[F](\lambda)=\int_{0}^{\infty}e^{-\lambda t}F(t)\d t,\]
	which is well-defined for any complex $\lambda$ with $\Re \lambda>0$. Taking the Laplace transform of equations (\ref{lde1}) and (\ref{lde2}) with respect to the variable $t$, we get
	\[\L[\hrho_+](\lambda,k)+\varepsilon \L[\hrho_+](\lambda,k)\L[t\hmu(kt)](\lambda,k)-\varepsilon\L[\hrho_-](\lambda,k)\L[t\hmu(kt)](\lambda,k)=\L[\hS_+](\lambda,k),\]
	\[\L[\hrho_-](\lambda,k)- \L[\hrho_+](\lambda,k)\L[t\hmu(kt)](\lambda,k)+\L[\hrho_-](\lambda,k)\L[t\hmu(kt)](\lambda,k)=\L[\hS_-](\lambda,k),\]
	which means
	\begin{equation}
		\label{lrho1}
		\L[\hrho_+](\lambda,k)=\L[\hS_+](\lambda,k)\left(1-\varepsilon \tK(\lambda,k)\right)+\L[\hS_-]\left(\varepsilon \tK\right),
	\end{equation}
	\begin{equation}
		\label{lrho2}
		\L[\hrho_-](\lambda,k)=\L[\hS_+]\tK(\lambda,k)+\L[\hS_-](\lambda,k)\left(1-\tK(\lambda,k)\right),
	\end{equation}
	where
	\[\tK(\lambda,k)=\frac{\L[t\hmu(kt)](\lambda,k)}{1+(\varepsilon+1)\L[t\hmu(kt)](\lambda,k)}.\]
	Lemma \ref{H2l} ensures that the symbol $1+(\varepsilon+1)\L[t\hmu(kt)](\lambda,k)$ is not degenerate. More precisely, we have that for any $\alpha\in[0,\alpha_0]$,
	\begin{equation}
		\label{con2}
		\inf_{k\in \Z^d\setminus\{0\}; \Re \lambda\geqslant 0} \left|1+(\alpha+1)\L[t\hmu(kt)](\lambda)\right|\geqslant \frac{\kappa_0}{2}>0
	\end{equation}
	for some positive constant $\kappa_0$.
	
	Thus in order to derive pointwise estimates for $\hrho(t,k)$, we first derive bounds on the resolvent kernel $\tK(\lambda,k)$.
	
	\begin{lemma}
		\label{esttK}
		Assume the Penrose conditions (\ref{H1},\ref{H2}) hold. Let $\lambda\in \C$, there is a pointwise constant $\theta_1<\theta_0$, such that $\tK(\lambda,k)$ is an analytic function in $\{\Re \lambda\geqslant -\theta_1|k|\}$. In addition, there is a universal constant $C$ such that
		\begin{equation*}
			\left|\tK(\lambda,k)\right|\leqslant \frac{C}{1+|k|^2+|\Im\lambda|^2}
		\end{equation*}
		uniformly in $\lambda$ and $k\neq 0$ such that $\Re \lambda=-\theta_1 |k|$. The constants $\theta_1$ and $C$ are both exclusively determined by $\mu$.
	\end{lemma}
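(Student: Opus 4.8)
The plan is to bound the numerator and denominator of
$\tK(\lambda,k)=\L[t\hmu(kt)](\lambda)\big/\bigl(1+(\varepsilon+1)\L[t\hmu(kt)](\lambda)\bigr)$ separately: a decaying estimate on the numerator comes from \eqref{H1} alone, while the lower bound on the denominator is obtained by pushing the Penrose estimate \eqref{con2} off the imaginary axis into a thin strip $\{-\theta_1|k|\leqslant\Re\lambda\leqslant 0\}$ by a compactness/continuity argument. Throughout one fixes once and for all $\theta_1\in(0,\theta_0/2]$, its precise value being pinned down only at the very end.

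For the numerator, \eqref{H1} gives $\abs{t\hmu(kt)}\leqslant C_\mu\,t\,e^{-\theta_0|k|t}$, so $\L[t\hmu(kt)](\lambda)=\int_0^\infty e^{-\lambda t}t\hmu(kt)\,\d t$ converges absolutely and is holomorphic in $\lambda$ on $\{\Re\lambda>-\theta_0|k|\}\supset\{\Re\lambda\geqslant-\theta_1|k|\}$ (note $|k|\geqslant 1$, so $-\theta_0|k|\leqslant-\theta_0<-\theta_1$). On the strip $-\theta_1|k|\leqslant\Re\lambda\leqslant0$ I would integrate by parts twice in $t$, using $t\hmu(kt)\big|_{t=0}=0$, $\partial_t\!\bigl(t\hmu(kt)\bigr)\big|_{t=0}=\hmu(0)=1$, and the bounds $\abs{\partial_t^m\bigl(t\hmu(kt)\bigr)}\leqslant C(\mu)|k|^m(1+|k|t)e^{-\theta_0|k|t}$ for $m=0,1,2$ coming from \eqref{H1}. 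Together with $e^{-\Re\lambda t}\leqslant e^{\theta_1|k|t}\leqslant e^{\theta_0|k|t/2}$ this yields $\abs{\L[t\hmu(kt)](\lambda)}\leqslant C_*|k|^{-2}$ and $\abs{\L[t\hmu(kt)](\lambda)}\leqslant C_*|\lambda|^{-2}$ with $C_*=C_*(\mu)$ \emph{independent of} $\theta_1$ (this is why $\theta_1\leqslant\theta_0/2$ is frozen first); since $|k|\geqslant1$ these combine to
\[
\abs{\L[t\hmu(kt)](\lambda)}\leqslant\frac{C(\mu)}{1+|k|^2+|\Im\lambda|^2},\qquad -\theta_1|k|\leqslant\Re\lambda\leqslant0 .
\]

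For the denominator, pick $M=M(\mu)\geqslant1$ so large that $(\alpha_0+1)C(\mu)/M^2\leqslant\tfrac12$, where $\alpha_0$ is the constant of Lemma \ref{H2l}, which depends on $\mu$ only. Then for $\Re\lambda\geqslant-\theta_1|k|$ with $\max(|k|,|\Im\lambda|)\geqslant M$ the numerator bound gives $\abs{1+(\varepsilon+1)\L[t\hmu(kt)](\lambda)}\geqslant\tfrac12$, uniformly in $\varepsilon\in[0,\alpha_0]$. For the remaining finitely many $k\in\Z^d\setminus\{0\}$ with $|k|<M$ (and $|\Im\lambda|<M$), the map $(\xi,\omega)\mapsto\L[t\hmu(kt)](\xi+i\omega)$ is uniformly continuous on the compact set $\{-\theta_0/2\leqslant\xi\leqslant0,\ |\omega|\leqslant M\}$, and \eqref{con2} (applied with $\alpha=\varepsilon$) gives $\abs{1+(\varepsilon+1)\L[t\hmu(kt)](i\omega)}\geqslant\kappa_0/2$; hence there is $\delta_0=\delta_0(\mu)>0$, uniform over these $k$ and over $\varepsilon\in[0,\alpha_0]$, with $\abs{1+(\varepsilon+1)\L[t\hmu(kt)](\xi+i\omega)}\geqslant\kappa_0/4$ whenever $|\xi|\leqslant\delta_0$. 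Now set $\theta_1:=\min\{\theta_0/2,\ \delta_0/M\}<\theta_0$, a constant depending only on $\mu$: for $-\theta_1|k|\leqslant\Re\lambda\leqslant0$ either the high-frequency bound applies, or $|k|<M,|\Im\lambda|<M$ and then $|\Re\lambda|\leqslant\theta_1 M\leqslant\delta_0$ so the compactness bound applies, while for $\Re\lambda\geqslant0$ one has \eqref{con2} itself; in all cases $\abs{1+(\varepsilon+1)\L[t\hmu(kt)](\lambda)}\geqslant c_0:=\min\{\tfrac12,\kappa_0/4\}>0$ on $\{\Re\lambda\geqslant-\theta_1|k|\}$. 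Consequently $\tK(\lambda,k)$ is a holomorphic function divided by a non-vanishing holomorphic one on that region, hence analytic there, and dividing the numerator estimate by $c_0$ gives $\abs{\tK(\lambda,k)}\leqslant C/(1+|k|^2+|\Im\lambda|^2)$ on $\Re\lambda=-\theta_1|k|$ with $C=C(\mu)$.

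The genuinely delicate point is this last step: transferring the Penrose bound from the imaginary axis into the strip, where smallness of $\L[t\hmu(kt)]$ is no longer available, and then chasing the constants in the correct order $C_*\rightsquigarrow M\rightsquigarrow\delta_0\rightsquigarrow\theta_1$ so that $\theta_1$ (and $C$) end up depending on $\mu$ only and not on $\varepsilon$ — the uniformity in $\varepsilon\in[0,\alpha_0]$ being exactly what Lemma \ref{H2l} supplies.
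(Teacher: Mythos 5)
The paper's own proof is a one-line deferral to \cite[Lemma 3.2]{GNR} together with the remark that \eqref{con2} keeps the denominator away from zero, and your argument is precisely a worked-out version of that same route (decay of the numerator from \eqref{H1} via double integration by parts, lower bound on the denominator by combining the small-numerator regime at high frequency with a compactness/continuity extension of the Penrose bound into a thin strip, with the constants ordered so that $\theta_1$ depends only on $\mu$), so it is correct and essentially identical in approach. The one point to tighten is the $m=2$ derivative bound: you should use $\bigl|\partial_t^2\bigl(t\hat{\mu}(kt)\bigr)\bigr|\leqslant C\bigl(|k|+|k|^2t\bigr)e^{-\theta_0|k|t}$, whose integral against $e^{\theta_0|k|t/2}$ is $O(1)$, since the cruder form $C|k|^2(1+|k|t)e^{-\theta_0|k|t}$ as written only yields $\bigl|\mathcal{L}[t\hat{\mu}(kt)](\lambda)\bigr|\leqslant C(1+|k|)|\lambda|^{-2}$, which is not enough to conclude the uniform $(1+|k|^2+|\Im\lambda|^2)^{-1}$ decay.
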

	
	\begin{proof}
		Use the same argument as in the proof of \cite[Lemma 3.2]{GNR}. Note that the modulus of the denominator of $\tK(\lambda,k)$ has a positive lower bound, by (\ref{con2}).
	\end{proof}
	
	\subsection{Pointwise estimates}
	\begin{lemma}
		Assume that Penrose conditions (\ref{H1},\ref{H2}) hold. Then the unique solution $\hrho_\pm(t,k)$ can be expressed by
		\begin{equation}
			\label{L33r1}
			\hrho_+(t,k)=\hS_+(t,k)-\varepsilon \int_0^t\hK(t-s,k)\hS_+(s,k)\d s+\varepsilon \int_0^t \hK (t-s,k) \hS_-(s,k)\d s,
		\end{equation}
		\begin{equation}
			\label{L33r2}
			\hrho_-(t,k)=\hS_-(t,k)+ \int_0^t\hK(t-s,k)\hS_+(s,k)\d s- \int_0^t \hK (t-s,k) \hS_-(s,k)\d s,
		\end{equation}
		where the kernel $\hK(t,k)$ satisfies
		\begin{equation}
			\label{L33K}
			\left|\hK(t,k)\right|\leqslant C e^{-\theta_1|kt|}
		\end{equation}
		for some constant C.
	\end{lemma}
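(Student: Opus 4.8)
The plan is to invert the Laplace transforms in the identities \eqref{lrho1}--\eqref{lrho2}. Working with $k\in\Z^d\setminus\{0\}$ throughout (recall $\hrho_\pm(t,0)=0$), I would first define the kernel $\hK(t,k)$ as the inverse Laplace transform (Bromwich integral) of $\tK(\lambda,k)$,
\[
\hK(t,k)=\frac{1}{2\pi i}\int_{\Re\lambda=\beta}e^{\lambda t}\,\tK(\lambda,k)\,\d\lambda,
\]
for a fixed $\beta>0$. Since $\tK(\lambda,k)=\L[t\hmu(kt)](\lambda)/\bigl(1+(\varepsilon+1)\L[t\hmu(kt)](\lambda)\bigr)$, and by \eqref{H1} the numerator is analytic and bounded in $\{\Re\lambda\geqslant-\theta_0|k|\}$ while the denominator stays bounded away from zero there by \eqref{con2}, the integrand is analytic in a neighbourhood of the strip $-\theta_1|k|\leqslant\Re\lambda\leqslant\beta$; together with the pointwise bound of Lemma \ref{esttK} this makes the integral absolutely convergent.

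Next I would shift the contour. By Lemma \ref{esttK}, $\tK(\cdot,k)$ is analytic on $\{\Re\lambda\geqslant-\theta_1|k|\}$, so Cauchy's theorem --- using the $\abs{\Im\lambda}^{-2}$ decay to discard the horizontal segments at infinity --- lets me move the line of integration to $\Re\lambda=-\theta_1|k|$ without changing the value. Parametrizing $\lambda=-\theta_1|k|+is$ on that line, the factor $e^{\lambda t}$ produces $e^{-\theta_1|k|t}$, and the estimate of Lemma \ref{esttK} yields
\[
\abs{\hK(t,k)}\leqslant\frac{e^{-\theta_1|k|t}}{2\pi}\int_{\R}\frac{C\,\d s}{1+|k|^2+s^2}\leqslant C\,e^{-\theta_1|kt|},
\]
with a constant depending only on $\mu$; this is \eqref{L33K}. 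It then remains to identify the representations \eqref{L33r1}--\eqref{L33r2}: since $\hS_\pm(t,k)=\hf^0_\pm(k,kt)$ inherits from the assumed regularity of $f^0$ enough decay in $t$ for its Laplace transform to be defined, the convolution theorem for the Laplace transform converts each product $\L[\hS_\pm]\tK$ into the time convolution $\int_0^t\hK(t-s,k)\hS_\pm(s,k)\,\d s$. Applying $\L^{-1}$ term by term to \eqref{lrho1} and \eqref{lrho2}, and invoking uniqueness of the solution of \eqref{lineareqs3}, gives exactly \eqref{L33r1} and \eqref{L33r2}.

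The step I expect to be the main obstacle is the rigorous justification of the contour deformation: one must check that $\tK(\lambda,k)$ has genuinely no singularity in the closed strip $-\theta_1|k|\leqslant\Re\lambda\leqslant\beta$ --- which is precisely where the non-degeneracy \eqref{con2} (equivalently Lemma \ref{H2l}) enters --- and that the decay in $\abs{\Im\lambda}$ is uniform enough both to kill the contributions of the far vertical segments and to produce a constant $C$ independent of $k$. One also has to ensure a priori that $\hrho_\pm(t,k)$ does not grow too fast in $t$ so that its Laplace transform is defined, but for the linear problem this follows from the decay of $\hS_\pm$ together with the resolvent bounds. The remaining ingredients --- the Bromwich integral, the convolution theorem, and the final bookkeeping --- are the argument of \cite[Lemma 3.3]{GNR} adapted to the two-species kernel.
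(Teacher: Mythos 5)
Your proposal is correct and follows essentially the same route as the paper, which simply defines $\hK$ as the inverse Laplace transform of $\tK$, invokes the convolution theorem to pass from \eqref{lrho1}--\eqref{lrho2} to \eqref{L33r1}--\eqref{L33r2}, and obtains \eqref{L33K} by the contour shift to $\Re\lambda=-\theta_1|k|$ using Lemma \ref{esttK}, citing \cite[Proposition 3.3]{GNR} for the details. Your write-up merely spells out the Bromwich-integral and contour-deformation steps that the paper leaves to the reference.
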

	\begin{proof}
		Estimates (\ref{L33r1},\ref{L33r2}) can be directly derived from (\ref{lrho1},\ref{lrho2}), with
		\[\tK(\lambda,k)=\L\left[\hK(t,k)\right](\lambda).\]
		To prove (\ref{L33K}), we only need to use the Laplace inverse transform and Lemma \ref{esttK}. A detailed analysis may be referred to the \cite[Proposition 3.3]{GNR}.
	\end{proof}
	
	\subsection{Gevrey estimates}
	Let
	\begin{equation*}
		\label{Sdef}
		\hS(t,k)=\hS_+(t,k)-\hS_-(t,k).
	\end{equation*}
	Then by (\ref{L33r1},\ref{L33r2}), we have
	\begin{equation}
		\label{rdef}
		\begin{aligned}
			\hrho(t,k)&=\hrho_+(t,k)-\hrho_-(t,k)\\
			&=\hS(t,k)-(1+\varepsilon) \int_0^t\hK(t-s,k)\hS(s,k)\d s.
		\end{aligned}
	\end{equation}
	Now we are ready to prove Landau damping of linearized Vlasov-Poisson system.
	
	\begin{proof}[Proof of Theorem \ref{linearthm}]
		Referring to equation (\ref{rdef}) and equation (\ref{L33K}), for any $k\neq 0$, we can observe that
		\begin{equation}
			\label{T34e1}
			\begin{aligned}
				& e^{z\jbr{k,kt}^\gamma}\left|\hrho(t,k)\right|\jbr{k,kt}^\sigma\\
				\leqslant & e^{z\jbr{k,kt}^\gamma} \jbr{k,kt}^\sigma \left[\left|\hS(t,k)\right|+(1+\varepsilon)\int_0^t \abs{\hK(t-s,k)}\abs{\hS(s,k)}\d s\right] \\
				\leqslant & e^{z\jbr{k,kt}^\gamma} \jbr{k,kt}^\sigma \left[\left|\hS(t,k)\right|+C\int_0^t e^{-\theta_1|k|(t-s)}\abs{\hS(s,k)}\d s\right].
			\end{aligned}
		\end{equation}
		It is sufficient to treat the time integral term, since another term is exactly $F[S](t,z)$.
		
		Our goal is to prove that 
		\begin{equation}
			\label{T34e0}
			e^{z\jbr{k,kt}^\gamma} \jbr{k,kt}^\sigma e^{-\theta_1\abs{k}(t-s)}\leqslant C e^{-\frac{1}{4}\theta_1\abs{k}(t-s)} e^{z\jbr{k,ks}^\gamma} \jbr{k,ks}^\sigma .
		\end{equation}
		To prove (\ref{T34e0}), we firstly treat the exponential term. As $z\in\left[0,\frac{1}{2}\theta_1\right]$ and $\gamma\in (0,1]$, we have
		\begin{equation*}
			\begin{aligned}
				z\jbr{k,kt}^\gamma-z\jbr{k,ks}^\gamma &\leqslant \frac{1}{2}\theta_1\left(\jbr{k,kt}-\jbr{k,ks}\right) \\
				&= \frac{1}{2}\theta_1\frac{\left(1+|k|^2+|kt|^2\right)-\left(1+|k|^2+|ks|^2\right)}{\sqrt{1+|k|^2+|kt|^2}+\sqrt{1+|k|^2+|ks|^2}}\\
				&\leqslant \frac{1}{2}\theta_1\frac{|k|^2(t^2-s^2)}{|kt|+|ks|}\\
				&=\frac{1}{2}\theta_1|k|(t-s),
			\end{aligned}
		\end{equation*}
		which means
		\begin{equation}
			\label{T34e2}
			e^{z\jbr{k,kt}^\gamma}e^{-\frac{1}{2}\theta_1|k|(t-s)}\leqslant e^{z\jbr{k,ks}^\gamma}.
		\end{equation}
		
		Secondly, we pay attention to the polynomial term in (\ref{T34e0}). We claim that
		\begin{equation}
			\label{T34e3}
			\jbr{k,kt}^\sigma e^{-\frac{1}{4}\theta_1|k|(t-s)}\leqslant C\jbr{k,ks}^\sigma
		\end{equation}
		for some universal constant $C$ independent of $k$ and $t$. To prove equation (\ref{T34e3}), we take the logarithm of both sides, resulting in the transformation of equation (\ref{T34e3}) to
		\begin{equation}
			\label{T34e4}
			\frac{\sigma}{2}\ln \frac{1+|k|^2+|kt|^2}{1+|k|^2+|ks|^2}-\frac{1}{4}\theta_1|k|(t-s)\leqslant \ln C.
		\end{equation}
		Now we only need to prove (\ref{T34e4}). If $s\leqslant \frac{1}{2}t$, note that $|k|\geqslant 1$
		\begin{equation}
			\label{T34e5}
			\begin{aligned}
				\frac{\sigma}{2}\ln \frac{1+|k|^2+|kt|^2}{1+|k|^2+|ks|^2}-\frac{1}{4}\theta_1|k|(t-s)\leqslant &\frac{\sigma}{2}\ln \frac{1+|k|^2+|kt|^2}{1+|k|^2}-\frac{1}{8}\theta_1|k|t\\
				\leqslant &\frac{\sigma}{2}\ln \frac{|k|^2+|kt|^2}{|k|^2}-\frac{1}{8}\theta_1t\\
				\leqslant & C.
			\end{aligned}
		\end{equation}
		If $s>\frac{1}{2}t$,
		\begin{equation}
			\label{T34e6}
			\begin{aligned}
				\frac{\sigma}{2}\ln \frac{1+|k|^2+|kt|^2}{1+|k|^2+|ks|^2}-\frac{1}{4}\theta_1|k|(t-s) \leqslant &\frac{\sigma}{2}\ln\frac{|kt|^2}{|ks|^2}-\frac{1}{4}\theta_1|k|(t-s)\\
				= &\sigma \left(\ln t-\ln s\right)-\frac{1}{4}\theta_1 (t-s)\\
				=& \left(\sigma\ln t-\frac{1}{4}\theta_1 t\right)-\left(\sigma \ln s-\frac{1}{4}\theta_1 s\right)\\
				\leqslant & 2\sup_{t>0}\left(\sigma\ln t-\frac{1}{4}\theta_1 t\right)\\
				\leqslant & C.
			\end{aligned}
		\end{equation}
		Combining (\ref{T34e5}) and (\ref{T34e6}), we prove (\ref{T34e4}). Then we derive (\ref{T34e3}). (\ref{T34e0}) holds by (\ref{T34e2}) and (\ref{T34e3}).
		
		Finally, substituting equation (\ref{T34e0}) back into equation (\ref{T34e1}), we get
		\begin{equation}
			\label{T34e7}
			\begin{aligned}
				& e^{z\jbr{k,kt}^\gamma}\left|\hrho(t,k)\right|\jbr{k,kt}^\sigma\\
				\leqslant & e^{z\jbr{k,kt}^\gamma} \jbr{k,kt}^\sigma \left|\hS(t,k)\right|+C\int_0^t e^{-\frac{1}{4}\theta_1|k|(t-s)}e^{z\jbr{k,ks}^\gamma}\jbr{k,ks}^\sigma\hS(s,k)\d s.
			\end{aligned}
		\end{equation}
		As a result, the desired estimate (\ref{estFr}) follows from (\ref{T34e7}).
	\end{proof}
	
	\begin{remark}
		\label{R1}
		In the proof of theorem \ref{linearthm}, we do not use the relation that $\hS(t,k)=\hf_+(k,kt)-\hf_-(k,kt)$, so (\ref{estFr}) is valid for any $\rho_\pm$ and $S_\pm$ which satisfy the equation (\ref{lde1}) and (\ref{lde2}). This is useful in the proof of non-linear Landau damping.
	\end{remark}
	
	\section{Nonlinear Landau damping}
	Let us recall the non-linear Vlasov-Poisson system (\ref{eq}) as follows: 
	\begin{equation}
		\label{eqs4}
		\left\{
		\begin{aligned}
			&\partial_t g_++\varepsilon E(t,x+vt) \nabla_v \mu(v) =-\varepsilon E(t,x+vt)(\nabla_v-t\nabla_x)g_+,\\
			&\partial_t g_--E(t,x+vt) \nabla_v \mu(v)= E(t,x+vt)(\nabla_v-t\nabla_x)g_-,\\
			&E=-\nabla_x \phi, \ -\Delta_x \phi=\rho=\rho_+-\rho_-,\\
			& \rho_\pm=\int_{\R^d}g_\pm(t,x-vt,v)\d v.
		\end{aligned}
		\right.
	\end{equation}
	In this section, we first show an inequality
	\[\partial_t G\left[g(t)\right](z)\leqslant C F[\rho](t,z)G\left[g(t)\right]^{\frac{d}{d+1}}(t,z)+C(1+t)F[\rho](t,z)\partial_z G\left[g(t)\right](z).\]
	Then we use the bootstrap method to prove the Landau damping of non-linear Vlasov-Poisson system. In this section, we set the Gevrey index $\gamma=1$ due to the techniques of the proof.
	
	\subsection{Estimate of $G\left[g(t)\right](z)$}
	\begin{lemma}
		\label{L41}
		For $t\geqslant 0$, $z\leqslant \frac{1}{2}\theta_1$, there exists a constant $C$ such that
		\begin{equation}
			\label{Ggtz}
			\partial_t \G{g(t)}(z)\leqslant C F[\rho](t,z)G\left[g(t)\right]^{\frac{d}{d+1}}(t,z)+C(1+t)F[\rho](t,z)\partial_z G\left[g(t)\right](z).
		\end{equation}
	\end{lemma}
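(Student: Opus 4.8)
The plan is to work on the Fourier side and to differentiate $\G{g(t)}(z)$ directly in time. Taking the $(x,v)$--Fourier transform of the two transport equations in \eqref{eqs4} and proceeding as in \eqref{emu}, one obtains for $k\neq0$
\[
\partial_t\hg_+(t,k,\eta)=-\varepsilon\, i(\eta-kt)\hE(t,k)\hmu(\eta-kt)-\frac{\varepsilon}{(2\pi)^{d}}\sum_{\ell\in\Z^d}i(\eta-kt)\,\hE(t,\ell)\,\hg_+(t,k-\ell,\eta-\ell t),
\]
and the same identity for $\hg_-$ with $\varepsilon$ replaced by $1$ and both signs reversed; moreover $\abs{\hE(t,\ell)}=\abs{\ell}^{-1}\abs{\hrho(t,\ell)}$, so that $A_{\ell,\ell t}\abs{\hE(t,\ell)}\leqslant F[\rho](t,z)\abs{\ell}^{\alpha-1}$ by \eqref{Fdef}. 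Since $A_{k,\eta}$ does not depend on $t$, differentiating \eqref{Gdef} and using $\abs{\partial_t\abs{w}^{d+1}}\leqslant(d+1)\abs{w}^{d}\abs{\partial_t w}$ reduces the problem to estimating
\[
\sum_{|j|\leqslant1}\sum_{k\in\Z^d}\int_{\R^d}A_{k,\eta}^{d+1}\,\abs{\partial_\eta^j\hg_\pm(t,k,\eta)}^{d}\,\abs{\partial_\eta^j\partial_t\hg_\pm(t,k,\eta)}\,\d\eta
\]
for each species, into which I substitute $\partial_\eta^j\partial_t\hg_\pm$ from the displayed evolution law (expanding $\partial_\eta^j$ by the Leibniz rule). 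This splits the right-hand side into a \emph{force term}, coming from the $\hmu$--piece, and a \emph{transport term}, coming from the frequency convolution.

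For the force term I would use \eqref{H1}: for $|j|\leqslant1$ one has $\abs{\partial_\eta^j[(\eta-kt)\hmu(\eta-kt)]}\leqslant C_\mu e^{-\frac12\theta_0\abs{\eta-kt}}$, and since $z\leqslant\frac12\theta_1<\theta_0$ together with $\jbr{k,\eta}\leqslant\jbr{k,kt}+\jbr{\eta-kt}$ the weight is absorbed as $A_{k,\eta}e^{-\frac12\theta_0\abs{\eta-kt}}\leqslant C\,A_{k,kt}\,\omega(\eta-kt)$ for a fixed exponentially decaying profile $\omega\in L^{d+1}(\R^d)$. Using $A_{k,kt}\abs{\hE(t,k)}\leqslant F[\rho](t,z)\abs{k}^{\alpha-1}$ and Hölder in $(k,\eta)$ with exponents $\tfrac{d+1}{d}$ and $d+1$ against $A_{k,\eta}^{d}\abs{\partial_\eta^j\hg_\pm}^{d}$, this contribution is bounded by $C\,F[\rho](t,z)\,\G{g(t)}^{\frac{d}{d+1}}(z)$; here the only arithmetic requirement is the summability of $\sum_{k}\abs{k}^{(\alpha-1)(d+1)}$ on $\Z^d$, which is exactly $\alpha<\tfrac1{d+1}$.

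The transport term is the heart of the matter. In the convolution one splits the weight as $A_{k,\eta}\leqslant C\,A_{\ell,\ell t}\,A_{k-\ell,\eta-\ell t}$ --- for which $\gamma=1$ provides the subadditivity $\jbr{k,\eta}\leqslant\jbr{\ell,\ell t}+\jbr{k-\ell,\eta-\ell t}$ while Peetre's inequality handles the polynomial part --- keeps the full $A_{\ell,\ell t}$ with $\hE$ to produce the factor $F[\rho](t,z)$, and leaves $A_{k-\ell,\eta-\ell t}\hg_\pm(t,k-\ell,\eta-\ell t)$, which after summation in $\ell$ rebuilds $\G{g(t)}$. When $|j|=1$ and $\partial_\eta$ falls on the factor $(\eta-kt)$, no large multiplier survives and, exactly as for the force term, one gets a further $C\,F[\rho](t,z)\,\G{g(t)}^{\frac{d}{d+1}}(z)$. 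In all remaining cases the factor $\abs{\eta-kt}\leqslant(1+t)\jbr{k,\eta}$ is retained, and this is where $\gamma=1$ is decisive: since $\partial_z(A_{k,\eta}^{d+1})=(d+1)\jbr{k,\eta}A_{k,\eta}^{d+1}$, the extra power $\jbr{k,\eta}$ is converted into a $z$--derivative of the $G$--summand, so that after the same weight split, Hölder, and $\ell$--summation this part is controlled by $C\,(1+t)\,F[\rho](t,z)\,\partial_z\G{g(t)}(z)$. Adding the three pieces yields \eqref{Ggtz}.

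The step I expect to be the real obstacle is the $\ell$--summation in the transport term: one must distribute $A_{k,\eta}$ between the $\hE$--factor (which has to retain the \emph{entire} weight to reconstitute $F[\rho]$) and the transported $\hg_\pm$--factor (which reconstitutes $\G{g(t)}$ or $\partial_z\G{g(t)}$), simultaneously isolate the single $\jbr{k,\eta}$ that becomes $\partial_z$, and still have a summable series in $\ell$ --- the crude weight split alone does not suffice, and one needs the resonant/non-resonant frequency decomposition of \cite{GNR}. It is at this stage that all of $\gamma=1$, $\sigma>d+1$ and $\alpha<\tfrac1{d+1}$ are used; the one-dimensional improvement to $\gamma\in(\tfrac13,1]$ noted after Theorem \ref{mainthm} is precisely the range for which that decomposition still closes.
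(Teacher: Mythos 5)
Your plan follows essentially the same route as the paper: Fourier side, split of $\partial_t\G{g(t)}$ into a force part and a transport part, the weight factorization $A_{k,\eta}\leqslant CA_{l,lt}A_{k-l,\eta-lt}$ with the full $A_{l,lt}$ kept on $\hE$ to produce $F[\rho]$, H\"older/Young with exponents $d+1$ and $\tfrac{d+1}{d}$, $\alpha<\tfrac1{d+1}$ for the $k$-summability of the force term, and conversion of the surviving $\jbr{k,\eta}$ into $\partial_z\G{g(t)}$. Two remarks on the step you yourself flag as the obstacle.

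First, the "resonant/non-resonant decomposition" you anticipate needing is, in the paper, just the elementary two-case bound
\begin{equation*}
\frac{A_{k,\eta}}{A_{l,lt}A_{k-l,\eta-lt}}\leqslant C\left(\jbr{k-l}^{-\sigma}+\jbr{l}^{-\sigma}\right),
\end{equation*}
obtained from $\jbr{k,\eta}\leqslant\jbr{l,lt}+\jbr{k-l,\eta-lt}$ (so one of the two factors dominates $\tfrac12\jbr{k,\eta}$) together with $\gamma=1$ to kill the exponentials; this supplies the $\ell$-summability ($\sigma>d+1$) without any genuinely new idea. Second, and this is the one place your write-up would fail as literally stated: bounding $\abs{\eta-kt}\leqslant(1+t)\jbr{k,\eta}$ and placing the whole power of $\jbr{k,\eta}$ on the diagonal factor $\abs{\hAg_\pm(t,k,\eta)}^{d}$ does not close, because after Young's inequality that factor is raised to the power $\tfrac{d+1}{d}$, producing $\jbr{k,\eta}^{\frac{d+1}{d}}$, which a single $\partial_z$ (worth exactly one power of $\jbr{k,\eta}$ per summand) cannot absorb. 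The paper instead distributes the multiplier between the two frequencies, writing $\eta-kt=(\eta-lt)-(k-l)t$ and using
\begin{equation*}
\abs{\eta-kt}\leqslant\jbr{t}\,\jbr{k,\eta}^{\frac{d}{d+1}}\jbr{k-l,\eta-lt}^{\frac{1}{d+1}},
\end{equation*}
so that after Young each of the two resulting terms carries exactly one power of the corresponding $\jbr{\cdot}$ and is absorbed by $\partial_z\G{g(t)}$. With that refinement your argument coincides with the paper's; the Leibniz term $\hE_j$ in the $|j|=1$ case is even easier, since no $\eta-kt$ multiplier survives at all.
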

	
	\begin{proof}
		Taking the Fourier transform over (\ref{eqs4}) with respect to $x,v$, and also noting that $\hE(t,0)=0$, we have
		\begin{equation}
			\label{L41e1}
			\partial_t \hg_+(t,k,\eta)+\varepsilon\hE(t,k)\hnmu(\eta-kt)=-\frac{\varepsilon}{(2\pi)^d}i\left[\sum_{l\in \Z^d\setminus  \{0\}}(\eta-kt)\hE(t,l)\hg_+(t,k-l,\eta-lt)\right],
		\end{equation}
		\begin{equation}
			\label{L41e2}
			\partial_t \hg_-(t,k,\eta)-\hE(t,k)\hnmu(\eta-kt)=\frac{1}{(2\pi)^d}i\left[\sum_{l\in \Z^d\setminus \{0\}}(\eta-kt)\hE(t,l)\hg_-(t,k-l,\eta-lt)\right].
		\end{equation}
In what follows our proof will be based on equations (\ref{L41e1}) and (\ref{L41e2}).
		
		For simplicity, we rewrite the generator functions (\ref{Fdef}) and (\ref{Gdef}) by the notation $A_{k,\eta}=e^{z\jbr{k,\eta}^\gamma} \jbr{k,\eta}^\sigma$ as
		\begin{equation}
			\label{Fdef2}
			F[\rho](t,z)=\sup_{k\in \Z^d\setminus\{0\}}A_{k,\eta} |\hat{\rho}(t,k)| |k|^{-\alpha},
		\end{equation}
		and
		\begin{equation*}
			G[g(t)](z)=\sum_{|j|\leqslant 1}\sum_{k\in\Z^d}\int_{\R^d} A_{k,\eta}^{d+1}\left[ \left|\partial^j_\eta \hat{g}_+(k,\eta)\right|^{d+1}+\left|\partial^j_\eta \hat{g}_-(k,\eta)\right|^{d+1} \right]  \d \eta.
		\end{equation*}
		Now, the left-hand-side of (\ref{Ggtz}) becomes
		\begin{equation}
			\label{Gtdef}
			\partial_t \G{g(t)}(z)=\sum_{|j|\leqslant 1}\sum_{k\in\Z^d}\int_{\R^d} A_{k,\eta}^{d+1}\left[\partial_t \left|\partial_\eta^j \hg_+(t,k,\eta)\right|^{d+1}+\partial_t\left|\partial_\eta^j \hg_-(t,k,\eta)\right|^{d+1}\right] \d \eta.
		\end{equation}
		To prove the target lemma, we also need some universal inequalities. Thanks to \cite[Equations (4.2,4.3)]{GNR}, we have
		\begin{equation}
			\label{GNR42}
			\begin{aligned}
				\jbr{k,\eta} &\leqslant 2\jbr{k',\eta'}\jbr{k-k',\eta-\eta'}\\
				\jbr{k,\eta} &\leqslant \jbr{k',\eta'}+\jbr{k-k',\eta-\eta'}\\
				A_{k,\eta} &\leqslant C A_{k',\eta'}A_{k-k',\eta-\eta'}
			\end{aligned}
		\end{equation}
		for some universal constant $C$. Now, let's begin to estimate (\ref{Gtdef}).
		
		Firstly, we consider the case $j=0$. Here $j$ is the differential index. Let
		\begin{equation}
			\label{G0def}
			G_0=\sum_{k\in\Z^d}\int_{\R^d}\left[\partial_t \left|\hAg_+(t,k,\eta)\right|^{d+1}+\partial_t\left|\hAg_-(t,k,\eta)\right|^{d+1}\right] \d \eta
		\end{equation}
		Here $\hAg(t,k,\eta)=A_{k,\eta}\hg(t,k,\eta)$. By direct computation,
		\begin{equation}
			\label{Ag12t}
			\partial_t \abs{\hAg_\pm(t,k,\eta)}^{d+1}=(d+1)\abs{\hAg_\pm(t,k,\eta)}^{d-1}\Re \left[\overline{\hAg}_\pm(t,k,\eta)\partial_t \hAg_\pm(t,k,\eta)\right].
		\end{equation}
		Use (\ref{L41e1}) and (\ref{L41e2}), the differential term in (\ref{Ag12t}) is
		\begin{equation}
			\label{Ag1t}
			\begin{aligned}
				\partial_t\hAg_+(t,k,\eta)=&-\varepsilon A_{k,\eta}\hE(t,k)\hnmu(\eta-kt)-\frac{\varepsilon}{(2\pi)^d}iA_{k,\eta}\sum_{l\in \Z^d\setminus \{0\}}(\eta-kt)\hE(t,l)\hg_+(t,k-l,\eta-lt),
			\end{aligned}
		\end{equation}
		\begin{equation}
			\label{Ag2t}
			\begin{aligned}
				\partial_t\hAg_-(t,k,\eta)=& A_{k,\eta}\hE(t,k)\hnmu(\eta-kt)+\frac{1}{(2\pi)^d}iA_{k,\eta}\sum_{l\in \Z^d\setminus \{0\}}(\eta-kt)\hE(t,l)\hg_-(t,k-l,\eta-lt).
			\end{aligned}
		\end{equation}
		Now we combine (\ref{G0def}), (\ref{Ag1t}), (\ref{Ag2t}) and (\ref{Ag12t}). We can write
		\begin{equation}
			\label{G0}
			\begin{aligned}
				G_0=&-(d+1)\varepsilon\sum_{k\in\Z^d\setminus \{0\}}\int_{\R^d}\abs{\hAg_+(t,k,\eta)}^{d-1}\Re\left[A_{k,\eta}\hE(t,k)\hnmu(\eta-kt)\overline{\hAg}_+(t,k,\eta)\right] \d\eta\\
				&-\frac{(d+1)\varepsilon}{(2\pi)^d}\sum_{k\in\Z^d}\int_{\R^d}\abs{\hAg_+(t,k,\eta)}^{d-1} \\ 
				&\Re \left[i(\eta-kt)A_{k,\eta}\sum_{l\in \Z^d\setminus \{0\}}\hE(t,l)\hg_+(t,k-l,\eta-lt)\overline{\hAg}_+(t,k,\eta) \right] \d \eta\\
				&+(d+1)\sum_{k\in\Z^d\setminus \{0\}}\int_{\R^d}\abs{\hAg_-(t,k,\eta)}^{d-1}\Re\left[A_{k,\eta}\hE(t,k)\hnmu(\eta-kt)\overline{\hAg}_-(t,k,\eta)\right] \d\eta \\
				&+\frac{d+1}{(2\pi)^d}\sum_{k\in\Z^d}\int_{\R^d}\abs{\hAg_-(t,k,\eta)}^{d-1}\\
				&\Re \left[i(\eta-kt)A_{k,\eta}\sum_{l\in \Z^d\setminus \{0\}}\hE(t,l)\hg_-(t,k-l,\eta-lt)\overline{\hAg}_-(t,k,\eta)\right] \d \eta \\
				=& -(d+1)\varepsilon G_0^1-\frac{(d+1)\varepsilon}{(2\pi)^d}G_0^2+(d+1)G_0^3+\frac{d+1}{(2\pi)^d}G_0^4.
			\end{aligned}
		\end{equation}
		Now, we are going to estimate $G_0^1\sim G_0^4$ in (\ref{G0}).
		
		{\bf $G_0^1$ and $G_0^3$:}
		
		Recall (\ref{GNR42}), we have
		\begin{equation}\label{G013}
			\begin{aligned}
				& \left|\sum_{k\in\Z^d\setminus \{0\}}\int_{\R^d}\abs{\hAg_\pm(t,k,\eta)}^{d-1} \Re\left[A_{k,\eta}\hE(t,k)\hnmu(\eta-kt)\overline{\hAg_\pm}(t,k,\eta)\right] \d\eta\right|\\
				\leqslant & C\sum_{k\in\Z^d\setminus \{0\}}\int_{\R^d} A_{k,kt}A_{0,\eta-kt}\left|\hE(t,k)\right|\jbr{\eta-kt}\left|\hmu(\eta-kt)\right|\left|\hAg_\pm(t,k,\eta)\right|^d \d \eta.
			\end{aligned}
		\end{equation}
		By the third equation of (\ref{eqs4}), we have that $\hE(t,k)=-\frac{i}{\abs{k}^2}k\cdot \hrho(t,k)$. Also,  $A_{0,\eta-kt}=\jbr{\eta-kt}^\sigma e^{z\jbr{\eta-kt}}$ by definition and $\hmu(\eta-kt)\leqslant C e^{-\theta_0\jbr{\eta-kt}}$ by (\ref{H1}),
		\begin{equation}
			\label{G013n1}
			\begin{aligned}
				&\sum_{k\in\Z^d\setminus \{0\}}\int_{\R^d} A_{k,kt}A_{0,\eta-kt}\left|\hE(t,k)\right|\jbr{\eta-kt}\left|\hmu(\eta-kt)\right|\left|\hAg_\pm(t,k,\eta)\right|^d \d \eta \\
				\leqslant & C\sum_{k\in\Z^d\setminus \{0\}} \frac{A_{k,kt}}{|k|}\left|\hrho(t,k)\right|\int_{\R^d}\jbr{\eta-kt}^{1+\sigma} e^{z\jbr{\eta-kt}-\theta_0\jbr{\eta-kt}} \left|\hAg_\pm(t,k,\eta)\right|^d\d \eta.
			\end{aligned}
		\end{equation}
		Using H\"{o}lder's inequality for sums,
		\begin{equation}
			\label{G013n2}
			\begin{aligned}
				&\sum_{k\in\Z^d\setminus \{0\}} \frac{A_{k,kt}}{|k|}\left|\hrho(t,k)\right|\int_{\R^d}\jbr{\eta-kt}^{1+\sigma} e^{z\jbr{\eta-kt}-\theta_0\jbr{\eta-kt}} \left|\hAg_\pm(t,k,\eta)\right|^d\d \eta\\
				\leqslant & C\left(\sum_{k\in\Z^d\setminus\{0\}} \left(\frac{A_{k,kt}}{|k|}\right)^{d+1}\left|\hrho(t,k)\right|^{d+1}\right)^{\frac{1}{d+1}}\times \\
				& \left(\sum_{k\in\Z^d\setminus\{0\}}\left(\int_{\R^d}\jbr{\eta-kt}^{1+\sigma} e^{z\jbr{\eta-kt}-\theta_0\jbr{\eta-kt}} \left|\hAg_\pm(t,k,\eta)\right|^d\d \eta\right)^{\frac{d+1}{d}}\right)^{\frac{d}{d+1}}.
			\end{aligned}
		\end{equation}
		Now let's consider the two terms in (\ref{G013n2}) separately. Regarding the first term, also note that $\alpha<\frac{1}{d+1}$, we have
		\begin{equation}
			\label{G0131}
			\begin{aligned}
				&\sum_{k\in\Z^d\setminus \{0\}}\left(\frac{A_{k,kt}}{|k|}\left|\hrho(t,k)\right|\right)^{d+1}\\
				= &\sum_{k\in\Z^d\setminus \{0\}}\left(A_{k,kt}\left|\hrho(t,k)\right||k|^{-\alpha}\right)^{d+1}\frac{1}{|k|^{d+1-(d+1)\alpha}}\\
				\leqslant &F[\rho]^{d+1}(t,z)\sum_{k\in\Z^d\setminus \{0\}}\frac{1}{|k|^{d+1-(d+1)\alpha}} \\
				\leqslant &CF[\rho]^{d+1}(t,z).
			\end{aligned}
		\end{equation}
		For the second term in (\ref{G013n2}), we apply the H\"{o}lder's inequality,
		\begin{equation}
			\label{G0132}
			\begin{aligned}
				&\sum_{k\in\Z^d\setminus \{0\}}\left(\int_{\R^d}\jbr{\eta-kt}^{1+\sigma} e^{z\jbr{\eta-kt}-\theta_0\jbr{\eta-kt}} \left|\hAg_\pm(t,k,\eta)\right|^d\d \eta\right)^{\frac{d+1}{d}}\\
				\leqslant & \sum_{k\in\Z^d\setminus \{0\}}\left(\int_{\R^d}\left(\jbr{\eta-kt}^{1+\sigma} e^{z\jbr{\eta-kt}-\theta_0\jbr{\eta-kt}}\right)^{d+1}\d \eta\right)^{\frac{1}{d}}\left(\int_{\R^d}\abs{\hAg_\pm(t,k,\eta)}^{d+1}\d \eta\right)\\
				\leqslant & \sum_{k\in\Z^d\setminus \{0\}}\left(\int_{\R^d}\left(\jbr{\eta}^{1+\sigma} e^{z\jbr{\eta}-\theta_0\jbr{\eta}}\right)^{d+1}\d \eta\right)^{\frac{1}{d}}\left(\int_{\R^d}\abs{\hAg_\pm(t,k,\eta)}^{d+1}\d \eta\right) \\
				\leqslant & C\sum_{k\in\Z^d\setminus \{0\}} \left(\int_{\R^d}\abs{\hAg_\pm(t,k,\eta)}^{d+1}\d \eta\right)\\
				\leqslant & C\G{g(t)}(z).
			\end{aligned}
		\end{equation}
		The third inequality in (\ref{G0132}) used that $z\leqslant \frac{\theta_1}{2}<\frac{\theta_0}{2}$. Now, we put (\ref{G013}), (\ref{G013n1}), (\ref{G013n2}), (\ref{G0131}) and (\ref{G0132}) together, we deduce
		\begin{equation}
			\label{G013est}
			G_0^1,G_0^3\leqslant CF[\rho](t,z)\G{g(t)}^{\frac{d}{d+1}}(z).
		\end{equation}
		
		{\bf $G_0^2$ and $G_0^4$:}
		
		To estimate these two terms, we firstly give two useful inequalities,
		\begin{equation}
			\label{ekt1}
			\begin{aligned}
				\abs{\eta-kt} &=\sqrt{\eta^2-2\eta kt+k^2t^2}\\
				&\leqslant\sqrt{\eta^2+\eta^2t^2+k^2+k^2t^2}\\
				&\leqslant \sqrt{\left(1+\eta^2+k^2\right)(1+t^2)}\\
				&=\jbr{k,\eta}\jbr{t},
			\end{aligned}
		\end{equation}
		and similarly,
		\begin{equation}
			\label{ekt2}
			\begin{aligned}
				\abs{\eta-kt}&=\abs{\eta-lt-\left(k-l\right)t}\\
				&\leqslant \jbr{\eta-lt,k-l}\jbr{t}.
			\end{aligned}
		\end{equation}
		Then we can estimate $G_0^2,G_0^4$ as follows:
		\begin{equation}
			\label{G024}
			\begin{aligned}
				&\abs{\sum_{k\in\Z^d}\int_{\R^d}\abs{\hAg_\pm(t,k,\eta)}^{d-1}\Re \left[i\left(\eta-kt\right)A_{k,\eta}\sum_{l\in \Z^d\setminus  \{0\}}\hE(t,l)\hg_\pm (t,k-l,\eta-lt)\overline{\hAg}_\pm(t,k,\eta)\right]\d \eta}\\
				\leqslant &\sum_{k\in\Z^d}\sum_{l\in \Z^d\setminus  \{0\}}\int_{\R^d}\abs{\eta-kt}A_{k,\eta}\abs{\hE(t,l)}\abs{\hg_\pm(t,k-l,\eta-lt)}\abs{\hAg_\pm(t,k,\eta)}^d\d \eta.
			\end{aligned}
		\end{equation}
		Using $\abs{\hE(t,l)}=\abs{l}^{-1}\abs{\hrho(t,l)}$, we have
		\begin{equation}
			\label{G024n1}
			\begin{aligned}
				&\sum_{k\in\Z^d}\sum_{l\in \Z^d\setminus  \{0\}}\int_{\R^d}\abs{\eta-kt}A_{k,\eta}\abs{\hE(t,l)}\abs{\hg_\pm(t,k-l,\eta-lt)}\abs{\hAg_\pm(t,k,\eta)}^d\d \eta\\
				\leqslant &\sum_{k\in\Z^d}\sum_{l\in \Z^d\setminus  \{0\}}\int_{\R^d}\abs{\eta-kt}\frac{A_{k,\eta}}{A_{l,lt}A_{k-l.\eta-lt}}A_{l,lt}\abs{l}^{-1}\abs{\hrho(t,l)}\abs{\hAg_\pm(t,k-l,\eta-lt)}\abs{\hAg_\pm(t,k,\eta)}^d\d \eta.
			\end{aligned}
		\end{equation}
		Using the definition of $F[\rho]$ (\ref{Fdef2}), 
		\begin{equation}
			\label{G024n2}
			\begin{aligned}
				&\sum_{k\in\Z^d}\sum_{l\in \Z^d\setminus  \{0\}}\int_{\R^d}\abs{\eta-kt}\frac{A_{k,\eta}}{A_{l,lt}A_{k-l.\eta-lt}}A_{l,lt}\abs{l}^{-1}\abs{\hrho(t,l)}\abs{\hAg_\pm(t,k-l,\eta-lt)}\abs{\hAg_\pm(t,k,\eta)}^d\d \eta\\
				\leqslant &F[\rho](t,z)\sum_{k\in\Z^d}\sum_{l\in \Z^d\setminus  \{0\}}\int_{\R^d}\abs{\eta-kt}\frac{A_{k,\eta}}{A_{l,lt}A_{k-l.\eta-lt}}\abs{l}^{-1+\alpha}\abs{\hAg_\pm(t,k-l,\eta-lt)}\abs{\hAg_\pm(t,k,\eta)}^d\d \eta.
			\end{aligned}
		\end{equation}
		Then we are going to estimate $\frac{A_{k,\eta}}{A_{l,lt}A_{k-l,\eta-lt}}$. We claim that
		\begin{equation}
			\label{G024cl}
			\frac{A_{k,\eta}}{A_{l,lt}A_{k-l,\eta-lt}}\leqslant C\left(\jbr{k-l}^{-\sigma}+\jbr{l}^{-\sigma}\right).
		\end{equation}
		We prove the claim by considering two cases. The second inequality in (\ref{GNR42}) indicates that either $\jbr{l,lt}\geqslant \frac{1}{2}\jbr{k,\eta}$ or $\jbr{k-l,\eta-lt}\geqslant \frac{1}{2}\jbr{k,\eta}$.
		
		{\bf Case 1: } If $\jbr{l,lt}\geqslant \frac{1}{2}\jbr{k,\eta}$.
		
		Use (\ref{GNR42}), we have
		\begin{equation}
			\label{G024c1}
			\begin{aligned}
				\frac{A_{k,\eta}}{A_{l,lt}A_{k-l,\eta-lt}}&=\frac{e^{z\jbr{k,\eta}}\jbr{k,\eta}^\sigma}{\jbr{l,lt}^\sigma\jbr{k-l,\eta-lt}^\sigma e^{z\jbr{l,lt}+z\jbr{k-l,\eta-lt}}}\\
				&\leqslant\frac{\jbr{k,\eta}^\sigma}{\jbr{l,lt}^\sigma\jbr{k-l,\eta-lt}^\sigma}\\
				&\leqslant\frac{C}{\jbr{k-l,\eta-lt}^\sigma}\\
				&\leqslant C\jbr{k-l}^{-\sigma}.
			\end{aligned}
		\end{equation}
		
		{\bf Case 2: } If $\jbr{k-l,\eta-lt}\geqslant \frac{1}{2}\jbr{k,\eta}$.
		
		Similarly to case 1,
		\begin{equation}
			\label{G024c2}
			\begin{aligned}
				\frac{A_{k,\eta}}{A_{l,lt}A_{k-l,\eta-lt}}
				&\leqslant\frac{\jbr{k,\eta}^\sigma}{\jbr{l,lt}^\sigma\jbr{k-l,\eta-lt}^\sigma}\\
				&\leqslant\frac{C}{\jbr{l,lt}^\sigma}\\
				&\leqslant C\jbr{l}^{-\sigma}.
			\end{aligned}
		\end{equation}
		Combining (\ref{G024c1}) and(\ref{G024c2}), we complete the proof of (\ref{G024cl}).
		
		Now let's continue the estimate in (\ref{G024n2}). By (\ref{ekt1}) and (\ref{ekt2}), we have
		\begin{equation}
			\label{G024ekt}
			\abs{\eta-kt}\leqslant\jbr{t}\jbr{k,\eta}^{\frac{d}{d+1}}\jbr{k-l,\eta-lt}^{\frac{1}{d+1}}.
		\end{equation}
		Combining (\ref{G024cl}) and (\ref{G024ekt}),
		\begin{equation}
			\label{G0241}
			\begin{aligned}
				&F[\rho](t,z)\sum_{k\in\Z^d}\sum_{l\in \Z^d\setminus  \{0\}}\int_{\R^d}\abs{\eta-kt}\frac{A_{k,\eta}}{A_{l,lt}A_{k-l.\eta-lt}}\abs{l}^{-1+\alpha}\abs{\hAg_\pm(t,k-l,\eta-lt)}\abs{\hAg_\pm(t,k,\eta)}^d\d \eta \\
				\leqslant & C \jbr{t}F[\rho](t,z)\sum_{k\in\Z^d}\sum_{l\in \Z^d\setminus  \{0\}}\left(\jbr{l-k}^{-\sigma}+\jbr{l}^{-\sigma}\right)\int_{\R^d} \abs{l}^{-1+\alpha}\jbr{k-l,\eta-lt}^{\frac{1}{d+1}}\abs{\hAg_\pm(t,k-l,\eta-lt)} \\
				&\jbr{k,\eta}^{\frac{d}{d+1}}\abs{\hAg_\pm(t,k,\eta)}^d\d \eta .
			\end{aligned}
		\end{equation}
		Young's inequality gives
		\begin{equation}
			\label{Young}
			ab\leqslant \frac{a^{d+1}}{d+1}+\frac{b^{\frac{d+1}{d}}}{\frac{d+1}{d}}\leqslant a^{d+1}+b^{\frac{d+1}{d}}
		\end{equation}
		for any $a,b>0$, so
		\begin{equation}
			\label{G024n3}
			\begin{aligned}
				& \sum_{k\in\Z^d}\sum_{l\in \Z^d\setminus  \{0\}}\left(\jbr{l-k}^{-\sigma}+\jbr{l}^{-\sigma}\right)\int_{\R^d} \abs{l}^{-1+\alpha}\jbr{k-l,\eta-lt}^{\frac{1}{d+1}}\abs{\hAg_\pm(t,k-l,\eta-lt)} \\
				&\jbr{k,\eta}^{\frac{d}{d+1}}\abs{\hAg_\pm(t,k,\eta)}^d\d \eta \\
				\leqslant & \sum_{k\in\Z^d}\sum_{l\in \Z^d\setminus  \{0\}}\left(\jbr{l-k}^{-\sigma}+\jbr{l}^{-\sigma}\right)\bigg[\int_{\R^d} \abs{l}^{-(d+1)+(d+1)\alpha}\jbr{k-l,\eta-lt}\\
				&\abs{\hAg_\pm(t,k-l,\eta-lt)}^{d+1}\d \eta+\int_{\R^d}\jbr{k,\eta}\abs{\hAg_\pm(t,k,\eta)}^{d+1}\d\eta\bigg]\\
				\leqslant &\biggl\{\sum_{k\in\Z^d}\sum_{l\in \Z^d\setminus  \{0\}}\int_{\R^d} \abs{l}^{-(d+1)+(d+1)\alpha}\jbr{k-l,\eta-lt}\abs{\hAg_\pm(t,k-l,\eta-lt)}^{d+1}\d \eta\\
				&+ \sum_{k\in\Z^d}\sum_{l\in \Z^d\setminus  \{0\}}\jbr{l-k}^{-\sigma} \int_{\R^d}\jbr{k,\eta}\abs{\hAg_\pm(t,k,\eta)}^{d+1}\d\eta\\
				&+\sum_{k\in\Z^d}\sum_{l\in \Z^d\setminus  \{0\}}\jbr{l}^{-\sigma} \int_{\R^d}\jbr{k,\eta}\abs{\hAg_\pm(t,k,\eta)}^{d+1}\d\eta\biggr\}.
			\end{aligned}
		\end{equation}
		Now we are going to estimate (\ref{G024n3}). Note that $\alpha<\frac{1}{d+1}$, the first integration term becomes
		\begin{equation}
			\label{G024i1}
			\begin{aligned}
				&\sum_{k\in\Z^d}\sum_{l\in \Z^d\setminus  \{0\}}\abs{l}^{-(d+1)+(d+1)\alpha}\int_{\R^d}\jbr{k-l,\eta-lt}\abs{\hAg_\pm(t,k-l,\eta-lt)}^{d+1}\d \eta\\
				\leqslant &\sum_{l\in \Z^d\setminus  \{0\}}\abs{l}^{-(d+1)+(d+1)\alpha}\sum_{k-l\in\Z^d} \int_{\R^d}\jbr{k-l,\eta}\abs{\hAg_\pm(t,k-l,\eta)}^{d+1}\d \eta\\
				\leqslant &C \partial_z\G{g(t)}(z).
			\end{aligned}
		\end{equation}
		Then, for the second integration term in (\ref{G024n3}), by $\sigma>d+1$, let $l=n+k$,
		\begin{equation}
			\label{G024i2}
			\begin{aligned}
				&\sum_{k\in\Z^d}\sum_{l\in \Z^d\setminus  \{0\}}\jbr{l-k}^{-\sigma} \int_{\R^d}\jbr{k,\eta}\abs{\hAg_\pm(t,k,\eta)}^{d+1}\d\eta\\
				\leqslant & \sum_{k\in\Z^d}\sum_{n\in \Z^d} \jbr{n}^{-\sigma} \int_{\R^d}\jbr{k,\eta}\abs{\hAg_\pm(t,k,\eta)}^{d+1}\d\eta\\
				\leqslant & C\partial_z\G{g(t)}(z).
			\end{aligned}
		\end{equation}
		Similarly, for the third integration term in (\ref{G024n3}),
		\begin{equation}
			\label{G024i3}
			\begin{aligned}
				&\sum_{k\in\Z^d}\sum_{l\in \Z^d\setminus  \{0\}}\jbr{l}^{-\sigma} \int_{\R^d}\jbr{k,\eta}\abs{\hAg_\pm(t,k,\eta)}^{d+1}\d\eta\\
				\leqslant &C\partial_z\G{g(t)}(z).
			\end{aligned}
		\end{equation}
		Combining (\ref{G024n3}), (\ref{G024i1}), (\ref{G024i2}) and (\ref{G024i3}),
		\begin{equation}
			\label{G0242}
			\begin{aligned}
				& \sum_{k\in\Z^d}\sum_{l\in \Z^d\setminus  \{0\}}\left(\jbr{l-k}^{-\sigma}+\jbr{l}^{-\sigma}\right)\int_{\R^d} \abs{l}^{-1+\alpha}\jbr{k-l,\eta-lt}^{\frac{1}{d+1}}\abs{\hAg_\pm(t,k-l,\eta-lt)} \\
				&\jbr{k,\eta}^{\frac{d}{d+1}}\abs{\hAg_\pm(t,k,\eta)}^d\d \eta \\
				\leqslant &C\partial_z\G{g(t)}(z).
			\end{aligned}
		\end{equation}
		By combining (\ref{G024}), (\ref{G024n1}), (\ref{G024n2}), (\ref{G0241}), and (\ref{G0242}), we obtain the following expression:
		\begin{equation}
			\label{G024est}
			G_0^2,G_0^4\leqslant C\jbr{t}F[\rho](t,z)\partial_z\G{g(t)}(z).
		\end{equation}
		Now, we finish the estimate of $G_0^1\sim G_0^4$ (\ref{G013est},\ref{G024est}), which means by (\ref{G0}),
		\begin{equation}
			\label{G0est}
			G_0\leqslant CF[\rho](t,z)\G{g(t)}^{\frac{d}{d+1}}(z)+C(1+t)F[\rho](t,z)\partial_z\G{g(t)}(z).
		\end{equation}
		
		Secondly, let's consider the integral term in (\ref{Gtdef}) involving $\abs{\partial_t\partial_\eta^j\hg_\pm(t,k,\eta)}$ where $\abs{j}=1$. By (\ref{L41e1}) and (\ref{L41e2}), it holds that
		\begin{equation}
			\label{L41j1}
			\begin{aligned}
				\partial_t\partial_\eta^j\hg_+(t,k,\eta)=&-\varepsilon \hE(t,k)\partial_\eta^j \widehat{\nabla_v\mu}(\eta-kt)-\frac{\varepsilon}{(2\pi)^d}i\sum_{l\in \Z^d\setminus \{0\}}(\eta-kt)\hE(t,l)\partial_\eta^j\hg_+(t,k-l,\eta-lt)\\
				&-\frac{\varepsilon}{(2\pi)^d} i\sum_{l\in \Z^d\setminus  \{0\}}\hE_j(t,l)\hg_+(t,k-l,\eta-lt),
			\end{aligned}
		\end{equation}
		and
		\begin{equation}
			\label{L41j2}
			\begin{aligned}
				\partial_t\partial_\eta^j\hg_-(t,k,\eta)=& \hE(t,k)\partial_\eta^j \widehat{\nabla_v\mu}(\eta-kt)+\frac{1}{(2\pi)^d}i\sum_{l\in \Z^d\setminus \{0\}}(\eta-kt)\hE(t,l)\partial_\eta^j\hg_-(t,k-l,\eta-lt)\\
				&+\frac{1}{(2\pi)^d} i\sum_{l\in \Z^d\setminus  \{0\}}\hE_j(t,l)\hg_-(t,k-l,\eta-lt).
			\end{aligned}
		\end{equation}
		Here $\hE_j(t,l)=\partial_\eta^j\left((\eta-kt)\hE(t,l)\right)$. The first two terms are treated similarly for the case $j=0$, and we can get the same conclusion as in (\ref{G0est}). So we only need to pay attention to the third term in (\ref{L41j1}) and (\ref{L41j2}). Let $\Aetag{\pm}(t,k,\eta)=A_{k,\eta}\partial_\eta^j\hg_\pm(t,k,\eta)$, then it holds that
		\begin{equation}
			\label{G1}
			A_{k,\eta}^{d+1}\partial_t\abs{\partial_\eta^j\hg_\pm(t,k,\eta)}^{d+1}=(d+1)\abs{\Aetag{\pm}(t,k,\eta)}^{d-1}\Re\left[\overline{\Aetag{\pm}}(t,k,\eta)\partial_t\Aetag{\pm}(t,k,\eta)\right].
		\end{equation} 
		As we have discussed before, we only take the third term of (\ref{L41j1}) and (\ref{L41j2}) into (\ref{G1}), so we only need to estimate
		\begin{equation}
			\label{N2}
			\begin{aligned}
				N_2=&\abs{\sum_{k\in\Z^d}\int_{\R^d}\abs{\Aetag{\pm}(t,k,\eta)}^{d-1}\Re\left[\overline{\Aetag{\pm}}(t,k,\eta)\sum_{l\in \Z^d\setminus  \{0\}}A_{k,\eta}\hE_j(t,l)\hg_\pm(t,k-l,\eta-lt)\right]}\\
				\leqslant &\sum_{k\in\Z^d}\sum_{l\in \Z^d\setminus  \{0\}}\int_{\R^d}A_{k,\eta}\abs{\hE_j(t,l)}\abs{\Aetag{\pm}(t,k,\eta)}^d\abs{\hg_\pm(t,k-l,\eta-lt)}\\
				\leqslant &\sum_{k\in\Z^d}\sum_{l\in \Z^d\setminus \{0\}}\int_{\R^d}A_{l,lt}\abs{\hrho(t,l)}\abs{l}^{-\alpha}\abs{l}^{-1+\alpha}\frac{A_{k,\eta}}{A_{l,lt}A_{k-l,\eta-lt}}\abs{\hAg_\pm(t,k-l,\eta-lt)}\abs{\Aetag{\pm}(t,k,\eta)}^d\d \eta\\
				\leqslant & F[\rho](t,z)\sum_{k\in\Z^d}\sum_{l\in \Z^d\setminus \{0\}}\int_{\R^d} \frac{A_{k,\eta}}{A_{l,lt}A_{k-l,\eta-lt}}\abs{l}^{-1+\alpha}\abs{\hAg_\pm(t,k-l,\eta-lt)}\abs{\Aetag{\pm}(t,k,\eta)}^d\d \eta.
			\end{aligned}
		\end{equation}
		We use again (\ref{G024cl}), together with (\ref{Young}), under the same analysis as in (\ref{G024i1}), (\ref{G024i2}) and (\ref{G024i3}), we have
		\begin{equation}
			\label{N21}
			\begin{aligned}
				&\sum_{k\in\Z^d}\sum_{l\in \Z^d\setminus \{0\}}\int_{\R^d} \frac{A_{k,\eta}}{A_{l,lt}A_{k-l,\eta-lt}}\abs{l}^{-1+\alpha}\abs{\hAg_\pm(t,k-l,\eta-lt)}\abs{\Aetag{\pm}(t,k,\eta)}^d\d \eta\\
				\leqslant &\sum_{k\in\Z^d}\sum_{l\in \Z^d\setminus  \{0\}}\left(\jbr{l-k}^{-\sigma}+\jbr{l}^{-\sigma}\right)\\
				&\left(\int_{\R^d}\abs{l}^{-(d+1)+(d+1)\alpha}\abs{\hAg_\pm(t,k-l,\eta-kl)}^{d+1}\d \eta+\int_{\R^d}\abs{\Aetag{\pm}(t,k,\eta)}^{d+1}\d \eta\right)\\
				\leqslant &C\G{g(t)}(z)\\
				\leqslant &C\partial_z \G{g(t)}(z).
			\end{aligned}
		\end{equation}
		In the last inequality, we use the fact that 
		$$
		\partial_zA_{k,\eta}^{d+1}=(d+1)\jbr{k,\eta}A_{k,\eta}^{d+1}>A_{k,\eta}^{d+1}.
		$$
		By (\ref{N2}) and (\ref{N21}), we have
		\begin{equation}
			\label{N2est}
			N_2\leqslant CF[\rho](t,z)\partial_z\G{g(t)}(z).
		\end{equation}
		Now, let's combine the estimates in two cases $j=0$ (\ref{G0est}) and $|j|=1$ (\ref{N2est}), we are able to prove (\ref{Ggtz}). Now we finish the proof of Lemma \ref{L41}.
	\end{proof}
	
	\subsection{Two priori estimates}
	Now, we are going to prove the Landau damping of (\ref{eqs4}). Let $\lambda(t)=\lambda_0+\lambda_0(1+t)^{-\delta}$ for some positive constant $\lambda_0$ such that $0<\lambda_0\leqslant 1$ and $\lambda_0\leqslant \min\{\frac{\lambda_1}{4},\frac{\theta_1}{4}\}$, where $\lambda_1$ is defined in (\ref{Ginitial}) and $\theta_1$ is defined in Lemma \ref{esttK}. $\delta$ is an arbitrary small constant $\delta<1$. As in Theorem \ref{mainthm}, $\sigma>\max\{d+1,3\}$.

	For $C_1,C_2>1$ large enough and $\varepsilon_0$ small enough, we give two propositions which will be proved in next two sections. 
	
	\begin{lemma}
		\label{L1}
		Assume that for any $0\leqslant t\leqslant T$,
		\begin{equation}
			\label{L1e1}
			F[\rho](t,\lambda(t))\leqslant4C_1\varepsilon_0^{\frac{1}{d+1}}\jbr{t}^{-\sigma+1}.
		\end{equation}
		Then it holds that
		\begin{equation}
			\label{L1e2}
			\sup_{0\leqslant t\leqslant T}\G{g(t)}(\lambda(t))\leqslant 4C_2\varepsilon_0.
		\end{equation}
	\end{lemma}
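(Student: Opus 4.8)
The plan is to run a scalar differential--inequality argument for $y(t):=\G{g(t)}(\lambda(t))$, exploiting the fact that the radius $\lambda(t)=\lambda_0+\lambda_0(1+t)^{-\delta}$ is \emph{strictly decreasing}, so that $\lambda'(t)=-\lambda_0\delta(1+t)^{-1-\delta}<0$. By the chain rule,
\[
\frac{\d}{\d t}\,y(t)=\bigl(\partial_t\G{g(t)}\bigr)(\lambda(t))+\lambda'(t)\,\bigl(\partial_z\G{g(t)}\bigr)(\lambda(t)),
\]
which is legitimate provided the solution carries analyticity radius slightly larger than $\lambda(t)$ and $\partial_z\G{g(t)}$ is finite; this regularity is supplied by the local-in-time theory (or one argues with smoothed data and passes to the limit). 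Since $\lambda(t)\le\lambda(0)=2\lambda_0\le\tfrac12\theta_1$, Lemma \ref{L41} applies at $z=\lambda(t)$ and gives
\[
\frac{\d}{\d t}\,y(t)\le C\,F[\rho](t,\lambda(t))\,y(t)^{\frac{d}{d+1}}+\bigl[\,C(1+t)F[\rho](t,\lambda(t))+\lambda'(t)\,\bigr]\bigl(\partial_z\G{g(t)}\bigr)(\lambda(t)).
\]

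The crucial point is that the bracketed coefficient is nonpositive, so the otherwise uncontrollable transport term is swallowed by the negative contribution from shrinking the Gevrey radius. Indeed, by the bootstrap hypothesis \eqref{L1e1} together with $\jbr{t}\geq\tfrac{1}{\sqrt2}(1+t)$,
\[
C(1+t)F[\rho](t,\lambda(t))\le 4CC_1\varepsilon_0^{\frac{1}{d+1}}(1+t)\jbr{t}^{-\sigma+1}\le C'\varepsilon_0^{\frac{1}{d+1}}(1+t)^{2-\sigma},
\]
and since $\sigma>3$ we may fix $\delta\in(0,\sigma-3)$, so that $(1+t)^{2-\sigma}\le(1+t)^{-1-\delta}$ for all $t\ge0$; then for $\varepsilon_0$ small enough we obtain $C(1+t)F[\rho](t,\lambda(t))\le\lambda_0\delta(1+t)^{-1-\delta}=-\lambda'(t)$, i.e.\ the bracket is $\le 0$. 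Dropping it and using $\partial_z\G{g(t)}\ge 0$ leaves the closed inequality $y'(t)\le C\,F[\rho](t,\lambda(t))\,y(t)^{\frac{d}{d+1}}$.

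It then remains to integrate. Differentiating $(y+\epsilon)^{\frac{1}{d+1}}$, dividing, and letting $\epsilon\downarrow0$ yields
\[
y(t)^{\frac{1}{d+1}}\le y(0)^{\frac{1}{d+1}}+\frac{C}{d+1}\int_0^t F[\rho](s,\lambda(s))\,\d s .
\]
For the initial term, $g(0)=f^0$ and $\lambda(0)=2\lambda_0\le\lambda_1$, so by monotonicity of $G$ in $z$ and \eqref{Ginitial} one has $y(0)=\G{f^0}(2\lambda_0)\le\G{f^0}(\lambda_1)\le\varepsilon_0$. For the integral, \eqref{L1e1} and $\sigma-1>2$ give $\int_0^t F[\rho](s,\lambda(s))\,\d s\le 4C_1\varepsilon_0^{\frac{1}{d+1}}\int_0^\infty\jbr{s}^{-\sigma+1}\,\d s=C''\varepsilon_0^{\frac{1}{d+1}}$. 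Hence $y(t)^{\frac{1}{d+1}}\le(1+\tfrac{CC''}{d+1})\varepsilon_0^{\frac{1}{d+1}}$, so $y(t)\le(1+\tfrac{CC''}{d+1})^{d+1}\varepsilon_0$ for every $t\in[0,T]$; choosing $C_2$ with $4C_2\ge(1+\tfrac{CC''}{d+1})^{d+1}$ gives \eqref{L1e2}.

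The main obstacle is the second step: one must recognize that Lemma \ref{L41} is not self-contained — the term $C(1+t)F[\rho]\,\partial_z\G{g(t)}$ cannot be bounded in terms of $y$ alone (a Cauchy--Kovalevskaya-type loss of radius) — and that the only way to dispose of it is to let $\lambda(t)$ decay at a rate tuned (through $\sigma>3$, the choice of $\delta$, and the smallness of $\varepsilon_0$) so that $-\lambda'(t)\,\partial_z\G{g(t)}$ dominates it. Once that sign is secured, the remainder is a routine integration of a scalar inequality. A minor technical point is the justification of $\tfrac{\d}{\d t}y$ and of the finiteness of $\partial_z\G{g(t)}$, handled by the standard approximation argument indicated above.
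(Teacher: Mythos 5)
Your proof is correct, and it reaches the same key inequality as the paper's argument by a slightly cleaner route. The paper proves Lemma \ref{L1} by rescaling to $\tG{g(t)}(z)=\G{g(t)}(\lambda(t)z)$, viewing \eqref{Ggtz} as a transport inequality in $(t,z)$, and running the method of characteristics: it shows the characteristic $Z(s)$ through $Z(t)=1$ stays in $[0,1]$ (nonnegativity from $(\lambda Z)'\le 0$, and $Z\le 1$ from the ``outgoing'' condition of Lemma \ref{L1l1}), then integrates $\frac{\d}{\d s}\tG{g(s)}^{1/(d+1)}(Z(s))\le C F[\rho](s,\lambda(s))$. You instead differentiate $y(t)=\G{g(t)}(\lambda(t))$ directly along the fixed curve $z=\lambda(t)$ and observe that the coefficient $C(1+t)F[\rho](t,\lambda(t))+\lambda'(t)$ of $\partial_z\G{g(t)}\ge 0$ is nonpositive, so the loss-of-radius term can simply be discarded; the remaining scalar inequality integrates exactly as in the paper. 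The decisive computation is identical in both versions — your verification that $C(1+t)F[\rho](t,\lambda(t))\le-\lambda'(t)$ under \eqref{L1e1}, using $\delta<\sigma-3$ and the smallness of $\varepsilon_0$, is precisely the paper's Lemma \ref{L1l1} and its condition \eqref{Lcon2} — but your packaging avoids the characteristic ODE, the monotonicity of $\tF{\rho}$ in $z$, and the confinement argument $Z(s)\in[0,1]$ altogether. What the paper's characteristics method buys is that it handles the transport term exactly without needing $\partial_z G\ge 0$ as a separate sign input (though that positivity is immediate from $\partial_z A_{k,\eta}^{d+1}=(d+1)\jbr{k,\eta}A_{k,\eta}^{d+1}>0$); what your version buys is brevity. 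Your treatment of the endpoints also matches the paper's: $y(0)\le \G{f^0}(\lambda_1)\le\varepsilon_0$ since $\lambda(0)=2\lambda_0\le\lambda_1$, the tail integral converges because $\sigma-1>2$, and the final choice of $C_2$ is the paper's \eqref{con3}. The only caveat, which you flag yourself, is the justification of the chain rule and the finiteness of $\partial_z\G{g(t)}$ at $z=\lambda(t)$; the paper is equally informal on this point, so no gap is introduced.
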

	
	\begin{lemma}
		\label{L2}
		Suppose
		\begin{equation}
			\label{L2e1}
			\sup_{0\leqslant t\leqslant T}\G{g(t)}(\lambda(t))\leqslant 4C_2\varepsilon_0.
		\end{equation}
		Then for any $0\leqslant t\leqslant T$,
		\begin{equation}
			\label{L2e2}
			F[\rho](t,\lambda(t))\leqslant2C_1\varepsilon_0^{\frac{1}{d+1}}\jbr{t}^{-\sigma+1}.
		\end{equation}
	\end{lemma}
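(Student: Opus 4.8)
The plan is to run the linear analysis of Section~3, in the form of Remark~\ref{R1}, on the genuinely nonlinear density equation. Integrating the Fourier transformed equations \eqref{L41e1}--\eqref{L41e2} in time over $[0,t]$ and putting $\eta=kt$ afterwards, and using Lemma~\ref{rhog}, $ik\hE(s,k)=\hrho(s,k)$ and $\hnmu(\xi)=i\xi\hmu(\xi)$ exactly as in the derivation of \eqref{lde1}--\eqref{lde2}, one finds that $\hrho_\pm(t,k)$ still solve \eqref{lde1}--\eqref{lde2}, now with the modified sources
\begin{equation*}
\hS^{\mathrm{NL}}_+(t,k)=\hf^0_+(k,kt)-\frac{\varepsilon i}{(2\pi)^d}\int_0^t\sum_{l\in\Z^d\setminus\{0\}}(kt-ks)\,\hE(s,l)\,\hg_+(s,k-l,kt-ls)\,\d s,
\end{equation*}
and the analogue $\hS^{\mathrm{NL}}_-(t,k)$ with $+\tfrac{i}{(2\pi)^d}$ in place of $-\tfrac{\varepsilon i}{(2\pi)^d}$. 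By Remark~\ref{R1} (with the standing smallness of $\varepsilon$), estimate \eqref{estFr} then applies with $S$ replaced by $S^{\mathrm{NL}}:=S^{\mathrm{NL}}_+-S^{\mathrm{NL}}_-$. Splitting $\hS^{\mathrm{NL}}(t,k)=\hS(t,k)+\hat N(t,k)$ into the data part $\hS(t,k)=\hf^0_+(k,kt)-\hf^0_-(k,kt)$ and the nonlinear time integral $\hat N$ (whose integrand carries $\varepsilon\hg_++\hg_-$), evaluating at $z=\lambda(t)$ (admissible since $\lambda(t)\le2\lambda_0\le\theta_1/2$), and using monotonicity of $F$ in $z$ together with $\lambda(s)\ge\lambda(t)$ for $s\le t$, we obtain
\begin{equation*}
F[\rho](t,\lambda(t))\le F[S](t,\lambda(t))+F[N](t,\lambda(t))+C\int_0^t e^{-\frac14\theta_1(t-s)}\big(F[S](s,\lambda(s))+F[N](s,\lambda(s))\big)\,\d s.
\end{equation*}
For the data part, Corollary~\ref{re24} at radius $\lambda_1$ gives $e^{\lambda_1\jbr{k,kt}}\jbr{k,kt}^\sigma|\hS(t,k)|\le C\varepsilon_0^{1/(d+1)}$, and since $\lambda_1-\lambda(t)\ge\lambda_1/2$ and $\jbr{k,kt}\ge\jbr t$ this yields the super-polynomial bound $F[S](t,\lambda(t))\le C\varepsilon_0^{1/(d+1)}e^{-\frac{\lambda_1}{2}\jbr t}\le C\varepsilon_0^{1/(d+1)}\jbr t^{-\sigma+1}$.

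The crux is to bound $F[N](t,\lambda(t))$, i.e.\ $A_{k,kt}|\hat N(t,k)||k|^{-\alpha}$. I would argue exactly as in the proof of Lemma~\ref{L41}: insert $|\hE(s,l)|=|l|^{-1}|\hrho(s,l)|$; use the convolution inequality $A^{(\lambda(t))}_{k,kt}\le CA^{(\lambda(s))}_{l,ls}A^{(\lambda(s))}_{k-l,kt-ls}$, valid because $\lambda(t)\le\lambda(s)$, refined as in \eqref{G024cl} to
\begin{equation*}
\frac{A^{(\lambda(t))}_{k,kt}}{A^{(\lambda(s))}_{l,ls}A^{(\lambda(s))}_{k-l,kt-ls}}\le C\,e^{-(\lambda(s)-\lambda(t))\jbr{k,kt}}\big(\jbr{l,ls}^{-\sigma}+\jbr{k-l,kt-ls}^{-\sigma}\big);
\end{equation*}
bound $A_{l,ls}|\hrho(s,l)|\le F[\rho](s,\lambda(s))|l|^{\alpha}$ from the definition of $F$; bound $\sup_\eta A_{k-l,\eta}|\hg_\pm(s,k-l,\eta)|\le C\G{g(s)}^{1/(d+1)}(\lambda(s))$ by the $L^\infty_\eta$ Sobolev embedding applied to a single species, as in Lemma~\ref{FC0G}; and control the prefactor by $|kt-ks|\le\sqrt2\,\jbr s\,\jbr{k-l,kt-ls}$ and $|kt-ks|\le\sqrt2\,\jbr s\,\jbr{k,kt}$ (the analogues of \eqref{ekt1}--\eqref{ekt2}). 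The $\ell^1$-summation in $l$ converges since $\sigma>d+1$ and $|l|^{-1+\alpha}\le1$ for $l\ne0$, and one is left with an estimate of the schematic form
\begin{equation*}
F[N](t,\lambda(t))\le C\Big(\sup_{0\le\tau\le t}\G{g(\tau)}(\lambda(\tau))\Big)^{1/(d+1)}\sup_{k\ne0}\int_0^t \jbr s\,e^{-(\lambda(s)-\lambda(t))\jbr{k,kt}}\,F[\rho](s,\lambda(s))\,\d s.
\end{equation*}

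The main obstacle is to squeeze the rate $\jbr t^{-\sigma+1}$ out of the last display despite the growing factor $\jbr s$. I would split the $s$-integral at $t/2$. On $[0,t/2]$ one has $\lambda(s)-\lambda(t)\gtrsim\jbr t^{-\delta}$ and $\jbr{k,kt}\ge\jbr t$, hence $e^{-(\lambda(s)-\lambda(t))\jbr{k,kt}}\le e^{-c\jbr t^{1-\delta}}$ decays faster than any power (here $\delta<1$ is used), and the remaining integral $\int_0^{t/2}\jbr s\,F[\rho](s,\lambda(s))\,\d s$ is bounded in the bootstrap because $\sigma>3$ makes $\jbr s\cdot\jbr s^{-\sigma+1}$ integrable. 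On $[t/2,t]$ one has $\jbr s\sim\jbr t$, so the bootstrapped decay $F[\rho](s,\lambda(s))\lesssim\jbr s^{-\sigma+1}$ supplies the factor $\jbr t^{-\sigma+1}$ while the residual $s$-integral is again finite by $\sigma>3$; the delicate point is the bookkeeping of $|kt-ks|$ against the $\sigma$-weight on $\hg_\pm$ so that the surplus power of $\jbr s$ does not degrade the rate. This produces
\begin{equation*}
F[N](t,\lambda(t))\le C(C_2\varepsilon_0)^{1/(d+1)}\,\jbr t^{-\sigma+1}\sup_{0\le s\le t}\jbr s^{\sigma-1}F[\rho](s,\lambda(s)),
\end{equation*}
with $C$ independent of $C_1,C_2,\varepsilon_0$.

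Finally, set $\Phi(t)=\sup_{0\le s\le t}\jbr s^{\sigma-1}F[\rho](s,\lambda(s))$, which is finite on $[0,T]$ by Lemma~\ref{FC0G} and \eqref{L2e1}. Multiplying the Volterra inequality of the first paragraph by $\jbr t^{\sigma-1}$, estimating the $e^{-\frac14\theta_1(t-s)}$-convolution of $F[S]+F[N]$ by the same splitting at $t/2$ (it contributes $C\varepsilon_0^{1/(d+1)}+C(C_2\varepsilon_0)^{1/(d+1)}\Phi(t)$, using $\sigma>3$ and the super-polynomial decay of $F[S]$), and taking the supremum in $t$ yields $\Phi(t)\le C\varepsilon_0^{1/(d+1)}+C(C_2\varepsilon_0)^{1/(d+1)}\Phi(t)$; choosing $\varepsilon_0$ small enough (depending on $C_2$) so that $C(C_2\varepsilon_0)^{1/(d+1)}\le\tfrac12$, absorbing, and taking $C_1\ge C$ gives $F[\rho](t,\lambda(t))\le2C_1\varepsilon_0^{1/(d+1)}\jbr t^{-\sigma+1}$ for $0\le t\le T$, which is \eqref{L2e2}.
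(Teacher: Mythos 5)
Your proposal is correct and follows essentially the same route as the paper: integrate the nonlinear equations to recast $\hrho_\pm$ as solutions of the linear density equations with a modified source, apply the Volterra estimate \eqref{estFr} via Remark \ref{R1}, split the source into the data part (controlled by Corollary \ref{re24}, giving the $e^{-\lambda_1\jbr{t}/2}$ decay) and the nonlinear part (controlled by the paratriangle inequality for $A_{k,\eta}$, the bootstrap bound on $G$, and the kernel estimate the paper quotes from \cite{GNR}), and close with the weighted quantity $\sup_s\jbr{s}^{\sigma-1}F[\rho](s,\lambda(s))$ absorbed for $\varepsilon_0$ small. The only difference is that you sketch the $t/2$-splitting proofs of the kernel and convolution bounds that the paper outsources to \cite{GNR} and \cite{GI}.
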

	
	By these two priori estimates, we can prove the following result.
	
	\begin{prop}
		\label{p4}
		Assume the conditions stated in Theorem \ref{mainthm}, and
		\begin{equation}
			\label{con1}
			\tag{Assumption 1}
			C_0\leqslant 2C_1,
		\end{equation}
		where $C_0$ is the constant in Lemma \ref{FC0G}. Then the following holds:
		\begin{equation}
			\label{p4eqn}
			\G{g(t)}(\lambda(t))\leqslant 4C_2\varepsilon_0
		\end{equation}
		for any $t>0$.
	\end{prop}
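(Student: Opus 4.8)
\emph{Proof plan.} This is a standard continuity (bootstrap) argument that feeds Lemma \ref{L1} and Lemma \ref{L2} into each other. First observe that for every $t\ge 0$ the modulated radius $\lambda(t)=\lambda_0+\lambda_0(1+t)^{-\delta}$ lies in $(\lambda_0,2\lambda_0]$, and since $\lambda_0\le\min\{\lambda_1/4,\theta_1/4\}$ we have $\lambda(t)\le 2\lambda_0\le\min\{\lambda_1/2,\theta_1/2\}$, so both Lemma \ref{L1} and Lemma \ref{L2} are applicable at radius $z=\lambda(t)$ for all $t$. Define
\[
T^{*}=\sup\Big\{T\ge 0:\ F[\rho](t,\lambda(t))\le 4C_1\varepsilon_0^{\frac{1}{d+1}}\jbr{t}^{-\sigma+1}\ \text{for all }0\le t\le T\Big\}.
\]
Using the local-in-time existence and uniqueness of the solution to \eqref{eqs4} in the Gevrey class measured by $G$ (as in Theorem \ref{mainthm}), the map $t\mapsto F[\rho](t,\lambda(t))$ is continuous on the maximal interval of existence, so it suffices to prove $T^{*}=+\infty$.

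Next I check $T^{*}>0$. At $t=0$ we have $\lambda(0)=2\lambda_0\le\lambda_1$ and $g(0,x,v)=f^0(x,v)$, so Corollary \ref{re24} together with the smallness hypothesis \eqref{Ginitial} gives
\[
F[\rho](0,\lambda(0))\le F[\rho](0,\lambda_1)\le C_0\,G[f^0]^{\frac{1}{d+1}}(\lambda_1)\le C_0\,\varepsilon_0^{\frac{1}{d+1}}.
\]
By Assumption \eqref{con1}, $C_0\le 2C_1$, hence $F[\rho](0,\lambda(0))\le 2C_1\varepsilon_0^{\frac{1}{d+1}}<4C_1\varepsilon_0^{\frac{1}{d+1}}=4C_1\varepsilon_0^{\frac{1}{d+1}}\jbr{0}^{-\sigma+1}$, and by continuity the defining inequality of $T^{*}$ persists for small $t>0$; thus $T^{*}>0$.

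Now the bootstrap closes. On $[0,T^{*}]$ the hypothesis \eqref{L1e1} of Lemma \ref{L1} holds by definition of $T^{*}$, so Lemma \ref{L1} yields $\sup_{0\le t\le T^{*}}\G{g(t)}(\lambda(t))\le 4C_2\varepsilon_0$, which is exactly \eqref{L2e1}. Lemma \ref{L2} then improves the density bound to
\[
F[\rho](t,\lambda(t))\le 2C_1\varepsilon_0^{\frac{1}{d+1}}\jbr{t}^{-\sigma+1},\qquad 0\le t\le T^{*},
\]
which is \emph{strictly} below the threshold $4C_1\varepsilon_0^{\frac{1}{d+1}}\jbr{t}^{-\sigma+1}$. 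If $T^{*}<+\infty$, continuity of $t\mapsto F[\rho](t,\lambda(t))$ lets this strict inequality survive on $[0,T^{*}+\tau]$ for some $\tau>0$, contradicting the maximality of $T^{*}$. Therefore $T^{*}=+\infty$, so \eqref{L1e1} holds for every $t>0$, and a final application of Lemma \ref{L1} gives $\G{g(t)}(\lambda(t))\le 4C_2\varepsilon_0$ for all $t>0$, which is \eqref{p4eqn}.

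The only non-routine point is the continuity of $t\mapsto F[\rho](t,\lambda(t))$ and $t\mapsto\G{g(t)}(\lambda(t))$ up to the bootstrap time: this rests on the a priori local solvability of \eqref{eqs4} in the space controlled by $G$, together with the observation that the supremum over $k\in\Z^d\setminus\{0\}$ in \eqref{Fdef} behaves like a maximum because of the decaying weight $\jbr{k,kt}^{-\sigma}$ (and $\sigma>\max\{d+1,3\}$ makes $\jbr{t}^{-\sigma+1}$ a genuine decay factor). Granting this, everything else is the elementary opening and closing of the continuity argument.
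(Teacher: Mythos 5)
Your proposal is correct and follows essentially the same bootstrap/continuity argument as the paper: establish the initial bound $F[\rho](0,\lambda(0))\leqslant C_0\varepsilon_0^{\frac{1}{d+1}}\leqslant 2C_1\varepsilon_0^{\frac{1}{d+1}}$ via Lemma \ref{FC0G} and \eqref{con1}, define the maximal time on which \eqref{L1e1} holds, and close the loop by feeding Lemma \ref{L1} into Lemma \ref{L2} to improve the constant from $4C_1$ to $2C_1$. The only cosmetic difference is that the paper derives a contradiction from the equality $F[\rho](T_*,\lambda(T_*))=4C_1\varepsilon_0^{\frac{1}{d+1}}\jbr{T_*}^{-\sigma+1}$ at the endpoint, whereas you extend the strict inequality past $T^*$ by continuity; the two are equivalent.
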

	
	\begin{proof}
		By lemma \ref{FC0G}, (\ref{Ginitial}) and (\ref{con1}), we have
		\begin{equation}
			\label{Fr0}
			F[\rho](0,\lambda(0))\leqslant C_0\G{g(0)}^{\frac{1}{d+1}}(\lambda_1)<4C_1\varepsilon_0^{\frac{1}{d+1}}.
		\end{equation}
		There exists $T>0$ such that (\ref{L1e1}) holds for any $t\in [0,T)$. We define
		\begin{equation*}
			T_*=\sup\left\{T>0:F[\rho](t,\lambda(t))\leqslant4C_1\varepsilon_0^{\frac{1}{d+1}}\jbr{t}^{-\sigma+1} \text{ for any } t\in [0,T)\right\}.
		\end{equation*}
		By (\ref{Fr0}), it is obvious to see that $T_*>0$. 
		
		We claim that $T_*=+\infty$. Otherwise, one has $0<T_*<+\infty$. By the continuity of $F[\rho](t,\lambda(t))$, we have that
		\begin{equation}
			\label{p41}
			F[\rho](T_*,\lambda(T_*))=4C_1\varepsilon_0^{\frac{1}{d+1}}\jbr{T_*}^{-\sigma+1}.
		\end{equation}
		By Lemma \ref{L1},
		\begin{equation*}
			\sup_{0\leqslant t\leqslant T_*}\G{g(t)}(\lambda(t))\leqslant 4C_2\varepsilon_0.
		\end{equation*}
		Then by Lemma \ref{L2}, there holds
		\begin{equation*}
			F[\rho](T_*,\lambda(T_*))\leqslant 2C_1\varepsilon_0^{\frac{1}{d+1}}\jbr{T_*}^{-\sigma+1},
		\end{equation*}
		which is contradicted to (\ref{p41}).
		
		Hence, $T_*=+\infty$. Finally, using Lemma \ref{L1}, we then deduce (\ref{p4eqn}).
	\end{proof}
	
	In the next two sections, we will prove lemma \ref{L1} and \ref{L2}.
	
	\subsection{Proof of Lemma \ref{L1}}
	Define
	\begin{equation}
		\label{tGFdef}
		\tG{g(t)}(z)=\G{g(t)}(\lambda(t)z), \ \tF{\rho}(t,z)=F[\rho](t,\lambda(t)z).
	\end{equation}
	We note that
	\begin{equation*}
		\partial_t \tG{g(t)}(z)=\partial_t \G{g(t)}(\lambda(t)z)+\lambda'(t)z\partial_z \G{g(t)}(\lambda(t)z).
	\end{equation*}
	Now we use Lemma \ref{L41} to obtain
	\begin{equation*}
		\begin{aligned}
			\partial_t \tG{g(t)}(z)\leqslant & C F[\rho](t,\lambda(t)z)\G{g(t)}^{\frac{d}{d+1}}(\lambda(t)z) +C(1+t)F[\rho](t,\lambda(t)z)\partial_z \G{g(t)}(\lambda(t)z)\\
			&+\lambda'(t)z\partial_z\G{g(t)}(\lambda(t)z)\\
			\leqslant &C\tF{\rho}(t,z)\tG{g(t)}^{\frac{d}{d+1}}(z)+\partial_z\tG{g(t)}(z) \frac{C(1+t)\tF{\rho}(t,z)+\lambda'(t)z}{\lambda(t)}.
		\end{aligned}
	\end{equation*}
	That is, 
	\begin{equation}
		\label{pttG}
		\partial_t \tG{g(t)}(z)-\partial_z\tG{g(t)}(z) \frac{C(1+t)\tF{\rho}(t,z)+\lambda'(t)z}{\lambda(t)}\leqslant C\tF{\rho}(t,z)\tG{g(t)}^{\frac{d}{d+1}}(z).
	\end{equation}
	For any $z\in [0,1]$ and any $t\in [0,T]$, the left hand side of (\ref{pttG}) is a transport equation with characteristic ODE:
	\begin{equation}
		\label{ODE}
		\left\{\begin{aligned}
			\dot{Z}(s)&=-\frac{\lambda'(s)Z(s)+C(1+s)\tF{\rho}(s,Z(s))}{\lambda(s)},\\
			Z(t)&=z.
		\end{aligned}
		\right.
	\end{equation}
	
	We firstly prove that $Z(s)\in [0,1]$ for any $s\in[0,t]$ if $Z(t)=z\in[0,1]$. By the first equation of (\ref{ODE}),
	\begin{equation}
		\label{char0}
		\left(\lambda(s)Z(s)\right)'=-C(1+s)\tF{\rho}(s,Z(s))\leqslant 0.
	\end{equation}
	Then for $s\in [0,t]$, $Z(t)=z\in [0,1]$, we take integral over (\ref{char0}),
	\begin{equation}
		\label{chars}
		\lambda(s)Z(s)=\lambda(t)z-\int_{s}^{t}-C(1+\tau)\tF{\rho}(\tau,Z(\tau))\d \tau\geqslant 0.
	\end{equation}
	Next, we are going to prove $Z(s)\leqslant 1$ for $s\in [0,t]$. To prove this result, we only need to show that $Z(s)$ is outgoing if $Z(s)=1$, which means $\dot{Z}(s)>0$ if $Z(s)=1$.
	\begin{figure}[h!]
		\centering
		\includegraphics[scale=0.3]{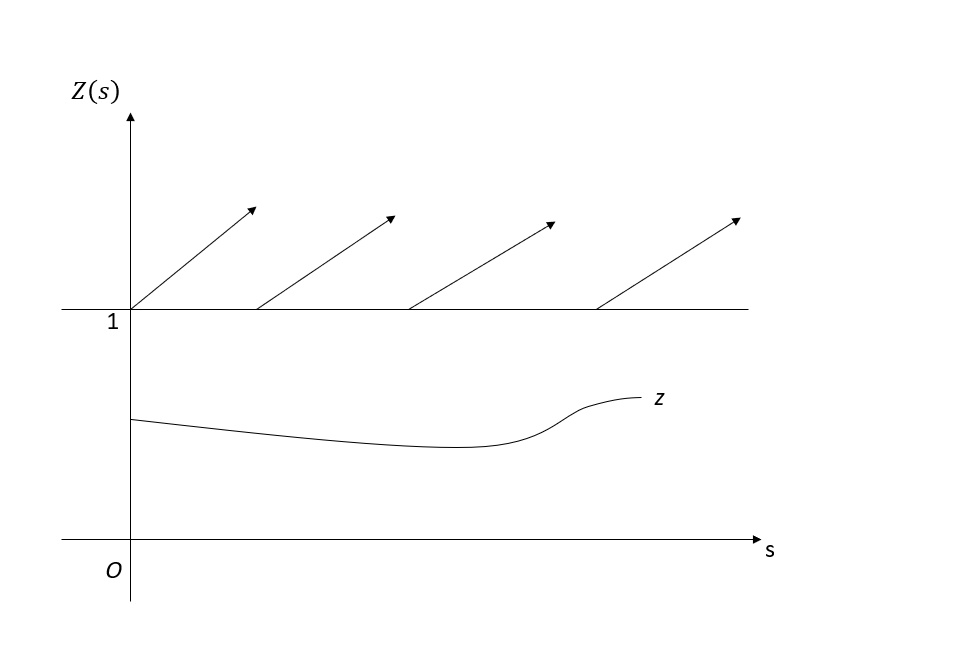}
		\caption{Characteristic curve}
	\end{figure}
	
	\begin{lemma}
		\label{L1l1}
		Assume the conditions stated in Lemma \ref{L1}, and if $Z(s)=1$, we have $\dot{Z}(s)> 0$.
	\end{lemma}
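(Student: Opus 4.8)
The plan is to read $\dot Z(s)$ directly off the characteristic ODE \eqref{ODE} at a point where $Z(s)=1$ and to show that the strictly positive contribution coming from $-\lambda'$ dominates the nonnegative source term. Putting $Z(s)=1$ in \eqref{ODE} gives
\[
\dot Z(s)=\frac{-\lambda'(s)-C(1+s)\tF{\rho}(s,1)}{\lambda(s)},
\]
and by the definition \eqref{tGFdef} one has $\tF{\rho}(s,1)=F[\rho](s,\lambda(s))$. Since $\lambda(s)=\lambda_0+\lambda_0(1+s)^{-\delta}$ we have $\lambda(s)\in(\lambda_0,2\lambda_0]$ and $-\lambda'(s)=\lambda_0\delta(1+s)^{-1-\delta}>0$, so it suffices to prove the strict inequality $-\lambda'(s)>C(1+s)\tF{\rho}(s,1)$.

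Using the standing hypothesis \eqref{L1e1} of Lemma \ref{L1} at time $s\le T$ we bound $C(1+s)\tF{\rho}(s,1)\le 4CC_1\varepsilon_0^{\frac1{d+1}}(1+s)\jbr{s}^{-\sigma+1}$. The next step is to compare the two decay rates. From $(1+s)^2\le 2\jbr{s}^2$ (which is just $(s-1)^2\ge0$) we get $(1+s)\jbr{s}^{-\sigma+1}\le 2^{(\sigma-1)/2}(1+s)^{2-\sigma}$; and since $\sigma>\max\{d+1,3\}$ we are free to fix the parameter $\delta$ in the definition of $\lambda(t)$ so small that $\delta<\min\{1,\sigma-3\}$, whence $2-\sigma<-1-\delta$ and therefore $(1+s)^{2-\sigma}\le(1+s)^{-1-\delta}$ for every $s\ge0$. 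Combining these estimates,
\[
\dot Z(s)\ \ge\ \frac{(1+s)^{-1-\delta}}{\lambda(s)}\Bigl(\lambda_0\delta-4CC_1\,2^{(\sigma-1)/2}\,\varepsilon_0^{\frac1{d+1}}\Bigr),
\]
and since $\lambda_0,\delta,C,C_1,\sigma$ are all fixed before $\varepsilon_0$ is chosen, taking $\varepsilon_0$ small enough makes the parenthesis strictly positive, which gives $\dot Z(s)>0$.

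The only genuine point to watch is the decay comparison in the second paragraph: even after the extra factor $(1+s)$ produced by the transport coefficient in \eqref{ODE}, the polynomial decay $\jbr{s}^{-\sigma+1}$ of the forcing $F[\rho]$ must still beat the decay $(1+s)^{-1-\delta}$ of $-\lambda'$. This is precisely where the hypothesis $\sigma>3$ of Theorem \ref{mainthm} enters, through the admissible choice $\delta<\sigma-3$; everything else is a direct substitution into the characteristic equation together with an application of \eqref{L1e1}.
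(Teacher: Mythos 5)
Your proposal is correct and follows essentially the same route as the paper: substitute $Z(s)=1$ into the characteristic ODE, bound the forcing term via the standing hypothesis \eqref{L1e1}, compare the decay rates $(1+s)^{2-\sigma}$ versus $(1+s)^{-1-\delta}$ using $\delta<\sigma-3$, and conclude by taking $\varepsilon_0$ small (the paper's Assumption 2). Your explicit constant $2^{(\sigma-1)/2}$ from the comparison $\jbr{s}\geq (1+s)/\sqrt{2}$ is in fact slightly more careful than the paper's implicit replacement of $\jbr{s}^{-\sigma+1}$ by $(1+s)^{-\sigma+1}$, but this changes nothing of substance.
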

	\begin{proof}
		By (\ref{ODE}),
		\begin{equation}
			\label{dZs}
			\dot{Z}(s)=-\frac{\lambda'(s)+C(1+s)F[\rho](s,\lambda(s))}{\lambda(s)}.
		\end{equation}
		We only need to pay attention to the numerator.
		\begin{equation}
			\label{L1l1e1}
			\begin{aligned}
				\lambda'(s)+C(1+s)F[\rho](s,\lambda(s))&\leqslant -\delta(1+s)^{-\delta-1}\lambda_0+4CC_1(1+s)\varepsilon_0^{\frac{1}{d+1}}(1+s)^{-\sigma+1}\\
				&=-\delta(1+s)^{-\delta-1}\lambda_0+4CC_1\varepsilon_0^{\frac{1}{d+1}}(1+s)^{-\sigma+2}.
			\end{aligned}
		\end{equation}
		Let's name the constant $C$ in the last row of (\ref{L1l1e1}) as $\bar{C}_1$. Since $-\delta-1>-\sigma+2$, which is equivalent to $\delta<\sigma-3$, we have $(1+s)^{-\sigma+2}\leqslant (1+s)^{-\delta-1}$. It then follows
		\begin{equation}
			\label{L1l1e2}
			\lambda'(s)+C(1+s)F[\rho](s,\lambda(s))\leqslant (1+t)^{-\delta-1}\left(-\delta\lambda_0+4\bar{C}_1 C_1\varepsilon_0^{\frac{1}{d+1}}\right)<0,
		\end{equation}
		provided that
		\begin{equation}
			\tag{Assumption 2}
			\label{Lcon2}
			4\bar{C}_1C_1\varepsilon_0^{\frac{1}{d+1}}<\delta\lambda_0.
		\end{equation}
		Now (\ref{L1l1e2}) leads to $\dot{Z}(s)>0$ by (\ref{dZs}).
	\end{proof}
	
By (\ref{chars}) and Lemma \ref{L1l1}, under (\ref{Lcon2}), we have that $Z(s)\in[0,1]$ for any $s\in[0,t]$ if $z\in[0,1]$. Note that $\tF{\rho}(t,z)$ is monotone increasing with respect to $z$. We have
	\begin{equation}
		\label{L1pe1}
		\begin{aligned}
			\frac{\d}{\d s}\tG{g(s)}^{\frac{1}{d+1}}(Z(s))=&\frac{1}{d+1}\frac{\partial_t \tG{g(s)}(Z(s))+\dot{Z}(s)\partial_z\tG{g(s)}(Z(s))}{\tG{g(s)}^{\frac{d}{d+1}}(Z(s))}\\
			\leqslant & C\tF{\rho}(s,Z(s))\\
			\leqslant & C\sup_{0\leqslant y\leqslant 1}\tF{\rho}(s,y)\\
			\leqslant & C\tF{\rho}(s,1)\\
			=& CF[\rho](s,\lambda(s)).
		\end{aligned}
	\end{equation}
	Taking integration over (\ref{L1pe1}),  it holds by definition of $\widetilde{G},\widetilde{F}$ in (\ref{tGFdef}) that
	\begin{equation}
		\label{L1pe2}
		\begin{aligned}
			\G{g(s)}^{\frac{1}{d+1}}=&\tG{g(s)}^{\frac{1}{d+1}}(Z(s))\\
			\leqslant& \tG{g(0)}^{\frac{1}{d+1}}(Z(0))+C\int_0^sF[\rho](\tau,\lambda(\tau))\d \tau\\
			\leqslant &\G{g(0)}^{\frac{1}{d+1}}(\lambda_1)+C\int_0^s4C_1\jbr{\tau}^{-\sigma+1}\d \tau\\
			\leqslant &\varepsilon_0^{\frac{1}{d+1}}+4CC_1\varepsilon_0^{\frac{1}{d+1}}\int_0^s\jbr{\tau}^{-\sigma+1}\d \tau\\
			\leqslant &(1+4CC_1)\varepsilon_0^{\frac{1}{d+1}}.
		\end{aligned}
	\end{equation}
	Let's name the constant $C$ in the last row of (\ref{L1e2}) as $\bar{C}_2$. Suppose
	\begin{equation}
		\label{con3}
		\tag{Assumption 3}
		1+\bar{C}_2C_1\leqslant (4C_2)^{\frac{1}{d+1}}.
	\end{equation}
	Let $s=t$ and $z=1$ in (\ref{L1pe2}), then we have
	\begin{equation*}
		\G{g(t)}(\lambda(t))\leqslant(1+\bar{C}_2C_1)^{d+1}\varepsilon_0\leqslant 4C_2\varepsilon_0.
	\end{equation*}
	Finally, since $t\in[0,T]$ is arbitrary, (\ref{L1e2}) holds under the assumptions (\ref{Lcon2}) and (\ref{con3}).
	
	In conclusion, Lemma \ref{L1} holds if the constants $C_1$, $C_2$, and $\varepsilon_0$ satisfy both (\ref{Lcon2}) and (\ref{con3}).
	
	\subsection{Proof of Lemma \ref{L2}}
	Taking integration from $0$ to $t$ over (\ref{L41e1}), (\ref{L41e2}) with respect to the time variable and then letting $\eta=kt$, we have
	\begin{equation}
		\label{L2pe1}
		\begin{aligned}
			&\hrho_+(t,k)+\varepsilon\int_0^t\hrho(s,k)(t-s)\hmu\left(k(t-s)\right)\d s \\
			=&\hf_+^0(k,kt)-\frac{\varepsilon}{(2\pi)^d}\sum_{l\in \Z^d\setminus  \{0\}}\int_0^tk(t-s)\hE(s,l)\hg_+(s,k-l,kt-ls)\d s\\
			=&\hS_+(t,k),
		\end{aligned}
	\end{equation}
	and
	\begin{equation}
		\label{L2pe2}
		\begin{aligned}
			&\hrho_-(t,k)-\int_0^t\hrho(s,k)(t-s)\hmu\left(k(t-s)\right)\d s \\
			=&\hf_-^0(k,kt)+\frac{1}{(2\pi)^d}\sum_{l\in \Z^d\setminus  \{0\}}\int_0^tk(t-s)\hE(s,l)\hg_-(s,k-l,kt-ls)\d s\\
			=&\hS_-(t,k).
		\end{aligned}
	\end{equation}
	Also, as for the analysis in linear case, let $\hS(t,k)=\hS_+(t,k)-\hS_-(t,k)$. By Theorem \ref{linearthm} and remark \ref{R1}, we have that for $t\geqslant 0$,
	\begin{equation}
		\label{L2pe3}
		F[\rho](t,\lambda(t))\leqslant F[S](t,\lambda(t))+C\int_{0}^{t}e^{-\frac{1}{4}\theta_1(t-s)} F[S](s,\lambda(s))\d s.
	\end{equation}
	Here we use that $\lambda(t)$ is a monotone decreasing function on $t$.
	\begin{lemma}
		\label{L2l1}
		Under the assumptions of Lemma \ref{L2},
		\begin{equation}
			\label{L2pe4}
			F[S](t,\lambda(t))\leqslant e^{-\lambda_1\frac{\jbr{t}}{2}}\G{f^0}^{\frac{1}{d+1}}(\lambda_1)+C (C_2\varepsilon_0)^{\frac{1}{d+1}}\jbr{t}^{-\sigma+1}\sup_{0\leqslant s\leqslant t}\left\{F[\rho](s,\lambda(s))\jbr{s}^{\sigma-1}\right\}.
		\end{equation}
	\end{lemma}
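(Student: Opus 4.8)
The plan is to make the source term explicit and to estimate it as a free‑transport part plus a nonlinear reaction part. Subtracting (\ref{L2pe2}) from (\ref{L2pe1}),
\[
\hS(t,k)=\bigl[\hf^0_+(k,kt)-\hf^0_-(k,kt)\bigr]-\frac{1}{(2\pi)^d}\sum_{l\neq 0}\int_0^t k(t-s)\cdot\hE(s,l)\,\bigl[\varepsilon\hg_++\hg_-\bigr](s,k-l,kt-ls)\,\d s,
\]
and for each fixed $k\neq 0$ one bounds $e^{\lambda(t)\jbr{k,kt}}\jbr{k,kt}^{\sigma}|k|^{-\alpha}|\hS(t,k)|$, treating the two brackets separately. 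For the first bracket: since $\lambda(t)\leqslant 2\lambda_0\leqslant\lambda_1/2$ and, as $\gamma=1$, $\jbr{k,kt}\geqslant\jbr{t}$, we may write $e^{\lambda(t)\jbr{k,kt}}=e^{-(\lambda_1-\lambda(t))\jbr{k,kt}}e^{\lambda_1\jbr{k,kt}}\leqslant e^{-\lambda_1\jbr{t}/2}e^{\lambda_1\jbr{k,kt}}$, so this contribution is at most $e^{-\lambda_1\jbr{t}/2}A_{k,kt}\,|\hg_+(0,k,kt)-\hg_-(0,k,kt)|$ with $A$ at radius $\lambda_1$ (using $g(0)=f^0$); by Corollary \ref{re24} this is $\lesssim e^{-\lambda_1\jbr{t}/2}\,\G{f^0}^{\frac1{d+1}}(\lambda_1)$, the first term of (\ref{L2pe4}).

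For the reaction bracket I would use $|k(t-s)\cdot\hE(s,l)|\leqslant(t-s)|l|^{-2}|k\cdot l|\,|\hrho(s,l)|$ together with $|\hrho(s,l)|\leqslant F[\rho](s,\lambda(s))|l|^{\alpha}A_{l,ls}^{-1}\leqslant M\jbr{s}^{-\sigma+1}|l|^{\alpha}A_{l,ls}^{-1}$, where $M:=\sup_{0\leqslant s\leqslant t}\{F[\rho](s,\lambda(s))\jbr{s}^{\sigma-1}\}$, and — by the $L^\infty$-Sobolev embedding of the proof of Lemma \ref{FC0G}, applied to the $(k-l)$-slice — $|\hg_\pm(s,k-l,kt-ls)|\leqslant C\,\G{g(s)}^{\frac1{d+1}}(\lambda(s))\,A_{k-l,kt-ls}^{-1}\leqslant C(4C_2\varepsilon_0)^{\frac1{d+1}}A_{k-l,kt-ls}^{-1}$ (all $A$'s at radius $\lambda(s)\geqslant\lambda(t)$). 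The three $A$-factors recombine through the inequalities (\ref{GNR42}) for the admissible splitting $(k,kt)=(l,ls)+(k-l,kt-ls)$, while the growth factor $k(t-s)$ is handled by the decomposition $k\cdot l=|l|^2+l\cdot(k-l)$, which splits the reaction bracket into a ``transport'' piece (weight $t-s$, including $l=k$) and a genuine ``reaction'' piece carrying an extra $|k-l|$ that is absorbed by $A_{k-l,kt-ls}$ — exactly as the terms $G_0^2,G_0^4$ and $N_2$ are treated in the proof of Lemma \ref{L41}; Young's inequality (\ref{Young}) and the surviving polynomial weights then make the $l$-sum converge once $\sigma>\max\{d+1,3\}$.

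The polynomial decay in $t$ is then obtained by splitting $\int_0^t=\int_0^{t/2}+\int_{t/2}^t$. On $[0,t/2]$ there is a genuine radius gap $\lambda(s)-\lambda(t)\geqslant c_\delta\jbr{t}^{-\delta}$, so that $e^{\lambda(t)\jbr{k,kt}-\lambda(s)(\jbr{l,ls}+\jbr{k-l,kt-ls})}\leqslant e^{-(\lambda(s)-\lambda(t))\jbr{k,kt}}\leqslant e^{-c_\delta\jbr{t}^{1-\delta}}$, which decays faster than any power of $\jbr{t}$ and so dominates every polynomial factor picked up; this part is $\lesssim(4C_2\varepsilon_0)^{\frac1{d+1}}M\jbr{t}^{-\sigma+1}$. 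On $[t/2,t]$ one has $\jbr{s}\sim\jbr{t}$, so the weight $\jbr{s}^{-\sigma+1}\leqslant C\jbr{t}^{-\sigma+1}$ factors out, and the residual $s$-integral and $l$-sum are bounded uniformly in $t$ (the polynomial weight $\jbr{k-l,kt-ls}^{-\sigma}$ absorbing the factor $t-s$, with $\sigma>3$ ensuring $s$-integrability), again yielding $\lesssim(4C_2\varepsilon_0)^{\frac1{d+1}}M\jbr{t}^{-\sigma+1}$; adding the two pieces gives the second term of (\ref{L2pe4}). I expect the main difficulty to be precisely this reaction-bracket estimate, carried out uniformly in $k$, with its $k(t-s)$ growth factor — the analogue of the $(1+t)$ in Lemma \ref{L41} — and with the loss kept down to the polynomial factor $\jbr{t}^{-\sigma+1}$; the decomposition $k\cdot l=|l|^2+l\cdot(k-l)$, the inequalities (\ref{GNR42}), Young's inequality and the hypothesis $\sigma>\max\{d+1,3\}$ must all be orchestrated there, as in the earlier estimates of $G_0^2,G_0^4,N_2$.
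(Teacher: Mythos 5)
Your decomposition of $\hS$, the treatment of the initial-data bracket via $\lambda(t)\leqslant\lambda_1/2$ and Corollary \ref{re24}, and the treatment of the reaction bracket by peeling off $F[\rho](s,\lambda(s))$ from $\hrho(s,l)$ and $\G{g(s)}^{1/(d+1)}\leqslant(4C_2\varepsilon_0)^{1/(d+1)}$ from $\hg_\pm(s,k-l,kt-ls)$, with the weights recombined through (\ref{GNR42}), is exactly the paper's argument. Two remarks, though. First, the detour through $k\cdot l=|l|^2+l\cdot(k-l)$ and Young's inequality (\ref{Young}) is out of place here: those are the tools for the $L^{d+1}$-based estimate of $\partial_t\G{g}$ in Lemma \ref{L41}, whereas the present estimate is a pointwise (sup in $k$) bound; the crude inequality $|k(t-s)\cdot\hE(s,l)|\leqslant|k|(t-s)|l|^{-1}|\hrho(s,l)|$ is all that is used, the factor $|k|^{1-\alpha}(t-s)$ being simply carried into the kernel $C_{k,l}(t,s)$.

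Second, and more substantively: where the paper closes the argument by quoting the kernel bound $\sup_k\sum_l\int_0^tC_{k,l}(t,s)\jbr{s}^{-\sigma+1}\,\d s\leqslant C\jbr{t}^{-\sigma+1}$ from \cite[eqn.~4.23]{GNR}, you attempt to re-derive it, and your sketch has a gap on $[t/2,t]$. The splitting $\int_0^{t/2}+\int_{t/2}^t$ and the use of the radius gap $\lambda(s)-\lambda(t)\gtrsim\jbr{t}^{-\delta}$ on $[0,t/2]$ are correct, but on $[t/2,t]$ the claim that ``the residual $s$-integral and $l$-sum are bounded uniformly in $t$, the weight $\jbr{k-l,kt-ls}^{-\sigma}$ absorbing the factor $t-s$'' is not justified as stated. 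The diagonal term $l=k$ does work this way (there $\jbr{k,ks}\geqslant\frac12\jbr{k,kt}$ and $(t-s)\jbr{k(t-s)}^{-\sigma}$ is integrable), but for $l\neq k$ the kernel is large near the echo times $s\approx kt/l$, where $\jbr{k-l,kt-ls}\approx\jbr{k-l}$ provides no decay in $s$ or $t$ and the prefactor $|k|(t-s)$ is genuinely dangerous; controlling the sum over these resonances is precisely the content of the quoted GNR estimate and the reason the constraint $3\gamma>1+2\delta$ (trivially met for $\gamma=1$, $\delta<1$) appears. So either cite that estimate, as the paper does, or supply the echo analysis explicitly; as written, the second half of your kernel bound does not stand on its own.
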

	\begin{proof}
		By (\ref{L2pe1}) and (\ref{L2pe2}) and $\hS(t,k)=\hS_+(t,k)=\hS_-(t,k)$, we have
		\begin{equation*}
			\begin{aligned}
				\hS(t,k)=&\hf_+^0(k,kt)-\hf_-^0(k,kt)\\
				&-\frac{\varepsilon}{(2\pi)^d}\sum_{l\in \Z^d\setminus  \{0\}}\int_{0}^{t}k(t-s)\hE(s,l)\hg_+(s,k-l,kt-ls)\d s\\
				&-\frac{1}{(2\pi)^d}\sum_{l\in \Z^d\setminus  \{0\}}\int_0^tk(t-s)\hE(s,l)\hg_-(s,k-l,kt-ls)\d s.
			\end{aligned}
		\end{equation*}
		To estimate $F[S](t,\lambda(t))$, we can write
		\begin{equation}
			\label{L2pe5}
			\begin{aligned}
				&e^{\lambda(t)\jbr{k,kt}}\abs{\hS(t,k)}\jbr{k,kt}^\sigma\abs{k}^{-\alpha}\\
				\leqslant &e^{\lambda(t)\jbr{k,kt}}\abs{\hf_+^0(k,kt)-\hf_-^0(k,kt)} \jbr{k,kt}^\sigma\abs{k}^{-\alpha}\\
				&+\frac{1}{(2\pi)^d}\sum_{l\in \Z^d\setminus \{0\}}\int_{0}^te^{\lambda(t)\jbr{k,kt}} \jbr{k,kt}^\sigma\abs{k}^{-\alpha}\frac{\abs{k}(t-s)}{\abs{l}}\hrho(s,l) \\
				&\abs{\varepsilon\hg_+(s,k-l,kt-ls)+\hg_-(s,k-l,kt-ls)}\d s\\
				=& I(t,k)+\frac{1}{(2\pi)^d} R(t,k).
			\end{aligned}
		\end{equation}
		Therefore it holds
		\begin{equation}
			\label{L2pe6}
			F[S](t,\lambda(t))\leqslant \sup_{k\in \Z^d\setminus\{0\}}I(t,k)+\frac{1}{(2\pi)^d} \sup_{k\in \Z^d\setminus\{0\}}R(t,k).
		\end{equation}
		Firstly, let's estimate $I(t,k)$. Since $\lambda(t)\leqslant 2\lambda_0\leqslant\frac{\lambda_1}{2}$, by (\ref{L2pe5}), it follows that
		\begin{equation}
			\label{L2Iest}
			\begin{aligned}
				\sup_{k\in \Z^d\setminus\{0\}}I(t,k)\leqslant &\sup_{k\in \Z^d\setminus\{0\}} e^{-(\lambda_1-\lambda(t))\jbr{k,kt}}e^{\lambda_1\jbr{k,kt}}\abs{\hf_+^0(k,kt)-\hf_-^0(k,kt)} \jbr{k,kt}^\sigma\abs{k}^{-\alpha}\\
				\leqslant &e^{-\frac{\lambda_1}{2}\jbr{t}}\sup_{k\in \Z^d\setminus\{0\}}\left\{\sup_\eta e^{\lambda_1\jbr{k,\eta}}\abs{\hf_+^0(k,\eta)-\hf_-^0(k,\eta)} \jbr{k,\eta}^\sigma\right\}\\
				\leqslant &Ce^{-\frac{\lambda_1}{2}\jbr{t}}\G{f^0}^{\frac{1}{d+1}} (\lambda_1).
			\end{aligned}
		\end{equation}
		Here, in the last inequality we have used Corollary \ref{re24}.
		
		Secondly, we are going to estimate $R(t,k)$. Using
		\begin{equation*}
			\jbr{k,kt}\leqslant \jbr{l,ls}+\jbr{k-l,kt-ls},
		\end{equation*}
		we have
		\begin{equation}
			\label{L2pe7}
			e^{\lambda(t)\jbr{k,kt}}\leqslant e^{\left(\lambda(t)-\lambda(s)\right)\jbr{k,kt}}e^{\lambda(s)\jbr{l,ls}}e^{\lambda(s)\jbr{k-l,kt-ls}}.
		\end{equation}
		So combining (\ref{L2pe5}) and (\ref{L2pe7}), we have
		\begin{equation}
			\label{L2pe8}
			\begin{aligned}
				R(t,k) &\leqslant \sum_{l\in \Z^d\setminus  \{0\}}\int_0^t \abs{k}^{1-\alpha}(t-s)e^{\left(\lambda(t)-\lambda(s)\right)\jbr{k,kt}}\jbr{k,kt}^\sigma \jbr{l,ls}^{-\sigma}\jbr{k-l,kt-ls}^{-\sigma} \abs{l}^{-1+\alpha}\\
				& \times e^{\lambda(s)\jbr{l,ls}}\abs{\hrho(s,l)}\jbr{l,ls}^\sigma \abs{l}^{-\alpha}\\ 
				&\times e^{\lambda(s)\jbr{k-l,kt-ls}}\left[\abs{\hg_+(s,k-l,kt-ls)}+\abs{\hg_-(s,k-l,kt-ls)}\right] \jbr{k-l,kt-ls}^\sigma \d s.
			\end{aligned}
		\end{equation}
		By Corollary \ref{re23} and (\ref{L2e1}), we have
		\begin{equation}
			\label{L2R1}
			\begin{aligned}
				&\sup_\eta e^{\lambda(s)\jbr{k-l,\eta}}\left[\abs{\hg_+(s,k-l,\eta)}+\abs{\hg_-(s,k-l,\eta)}\right] \jbr{k-l,\eta}^\sigma \\
				\leqslant &C \G{g(t)}^{\frac{1}{d+1}}(\lambda(s))\\
				\leqslant &C (4C_2\varepsilon_0)^{\frac{1}{d+1}}.
			\end{aligned}
		\end{equation}
		So the third term of (\ref{L2pe8}) can be bounded using  (\ref{L2R1}) as 
		\begin{equation}
			\label{L2R1est}
			\begin{aligned}
				& e^{\lambda(s)\jbr{k-l,kt-ls}}\left[\abs{\hg_+(s,k-l,kt-ls)}+\abs{\hg_-(s,k-l,kt-ls)}\right] \jbr{k-l,kt-ls}^\sigma\\
				\leqslant &C (4C_2\varepsilon_0)^{\frac{1}{d+1}}.
			\end{aligned}
		\end{equation}
		Then we define
		\begin{equation*}
			C_{k,l}(t,s)=\abs{k}^{1-\alpha}(t-s)e^{\left(\lambda(t)-\lambda(s)\right)\jbr{k,kt}^\gamma}\jbr{k,kt}^\sigma \jbr{l,ls}^{-\sigma}\jbr{k-l,kt-ls}^{-\sigma} \abs{l}^{-1+\alpha}.
		\end{equation*}
		Here, $\gamma$ is the Gevrey index as we mentioned in (\ref{Fdef}) and (\ref{Gdef}). Provided $3\gamma>1+2\delta$, we can use \cite[eqn. 4.23]{GNR} to deduce
		\begin{equation}
			\label{GNR423}
			\sup_{k\in \Z^d\setminus\{0\}}\sum_{l\in \Z^d\setminus \{0\}}\int_0^tC_{k,l}(t,s)\jbr{s}^{-\sigma+1}\d s\leqslant C\jbr{t}^{-\sigma+1}.
		\end{equation}
		For the non-linear damping, we only deal with the case that the Gevrey index is equal to $1$, which means $\gamma=1$, so (\ref{GNR423}) still holds since $\delta<1$.
		By (\ref{GNR423}), together with (\ref{L2R1est}) and (\ref{L2pe8}), we have
		\begin{equation}
			\label{L2Rest}
			\begin{aligned}
				& \sup_{k\in \Z^d\setminus\{0\}}R(t,k)\\
				\leqslant & C (4C_2\varepsilon_0)^{\frac{1}{d+1}}\left(\sup_{0\leqslant s\leqslant t}F[\rho](s,\lambda(s))\jbr{s}^{\sigma-1}\right) \sup_{k\in \Z^d\setminus\{0\}}\sum_{l\in \Z^d\setminus \{0\}}\int_0^tC_{k,l}(t,s)\jbr{s}^{-\sigma+1}\d s\\
				\leqslant &C (4C_2\varepsilon_0)^{\frac{1}{d+1}}\jbr{t}^{-\sigma+1}\left(\sup_{0\leqslant s\leqslant t}F[\rho](s,\lambda(s))\jbr{s}^{\sigma-1}\right).
			\end{aligned}
		\end{equation}
		Combing (\ref{L2pe6}), (\ref{L2Iest}) and (\ref{L2Rest}), we complete the proof of (\ref{L2pe4}).
	\end{proof}
	
	Now, we would use Lemma \ref{L2l1} to prove Lemma \ref{L2}. For convenience, define
	\begin{equation}
		\label{zetadef}
		\zeta(t)=\sup_{0\leqslant s\leqslant t}F[\rho](s,\lambda(s))\jbr{s}^{\sigma-1}.
	\end{equation}
	Clearly, $\zeta(t)$ is a monotone increasing function. So, we can write (\ref{L2pe4}) into
	\begin{equation}
		\label{L2pe9}
		F[S](t,\lambda(t))\leqslant e^{-\lambda_1\frac{\jbr{t}}{2}}\G{f^0}^{\frac{1}{d+1}}(\lambda_1)+C (C_2\varepsilon_0)^{\frac{1}{d+1}}\jbr{t}^{-\sigma+1}\zeta(t).
	\end{equation}
	Combing (\ref{L2pe3}) and (\ref{L2pe9}), we have
	\begin{equation}
		\label{L2pe10}
		\begin{aligned}
			F[\rho](t,\lambda(t))\leqslant & e^{-\lambda_1\frac{\jbr{t}}{2}}\G{f^0}^{\frac{1}{d+1}}(\lambda_1)+C (C_2\varepsilon_0)^{\frac{1}{d+1}}\jbr{t}^{-\sigma+1}\zeta(t) \\
			&+C\int_0^t e^{-\frac{1}{4}\theta_1(t-s)}e^{-\lambda_1\frac{\jbr{s}}{2}}\d s \G{f^0}^{\frac{1}{d+1}}(\lambda_1)\\
			&+C (C_2\varepsilon_0)^{\frac{1}{d+1}}\zeta(t)\int_0^t e^{-\frac{1}{4}\theta_1(t-s)}\jbr{s}^{-\sigma+1}\d s.
		\end{aligned}
	\end{equation}
	By \cite[Lemmas 4.8 and 4.9]{GI}, it holds that
	\begin{equation}
		\label{GI48}
		\int_0^t e^{-\frac{1}{4}\theta_1(t-s)}e^{-\lambda_1\frac{\jbr{s}}{2}}\d s \leqslant Ce^{-\frac{\lambda_1}{2}\jbr{t}},
	\end{equation}
	and
	\begin{equation}
		\label{GI49}
		\int_0^t e^{-\frac{1}{4}\theta_1(t-s)}\jbr{s}^{-\sigma+1}\d s\leqslant C\jbr{t}^{-\sigma+1}.
	\end{equation}
	Now let's use (\ref{GI48}) and (\ref{GI49}) to continue the estimate in (\ref{L2pe10}), and also note (\ref{Ginitial}). It then holds that
	\begin{equation}
		\label{L2Frho}
		F[\rho](t,\lambda(t))\leqslant Ce^{-\lambda_1\frac{\jbr{s}}{2}}\varepsilon_0^{\frac{1}{d+1}} +C (C_2\varepsilon_0)^{\frac{1}{d+1}}\jbr{t}^{-\sigma+1}\zeta(t).
	\end{equation}
	Now we recall the definition of $\zeta(t)$ (\ref{zetadef}), and use (\ref{L2Frho}), to get
	\begin{equation}
		\label{zetaest}
		\begin{aligned}
			\zeta(t)=&\sup_{0\leqslant s\leqslant t}F[\rho](s,\lambda(s))\jbr{s}^{\sigma-1}\\
			\leqslant &C\varepsilon_0^{\frac{1}{d+1}}\sup_{0\leqslant s\leqslant t} \left\{e^{-\lambda_1\frac{\jbr{s}}{2}}\jbr{s}^{\sigma-1}\right\}+C (C_2\varepsilon_0)^{\frac{1}{d+1}}\zeta(t)\\
			\leqslant &C\varepsilon_0^{\frac{1}{d+1}}+CC_2^{\frac{1}{d+1}}\varepsilon_0^{\frac{1}{d+1}} \zeta(t).
		\end{aligned}
	\end{equation}
	Let's name the constant $C$ in the last row of (\ref{zetaest}) as $\bar{C}_3$, then we have
	\begin{equation}
		\label{L2zetaest}
		\zeta(t)\leqslant \frac{\bar{C}_3}{1-\bar{C}_3C_2^{\frac{1}{d+1}}\varepsilon_0^{\frac{1}{d+1}}}\varepsilon_0^{\frac{1}{d+1}}.
	\end{equation}
	Assume that
	\begin{equation}
		\tag{Assumption 4}
		\label{con4}
		\frac{\bar{C}_3}{1-\bar{C}_3C_2^{\frac{1}{d+1}}\varepsilon_0^{\frac{1}{d+1}}}\leqslant 2C_1.
	\end{equation}
	Combining (\ref{L2zetaest}) and (\ref{zetadef}), we can deduce (\ref{L2e2}).
	
	In conclusion, Lemma \ref{L2} holds provided that (\ref{con4}) holds.
	
	\subsection{Proof of the main theorem}
	To prove Theorem \ref{mainthm}, we need fix $C_1$, $C_2$ and $\varepsilon_0$ to make sure Lemmas \ref{L1} and \ref{L2} as well as  Proposition \ref{p4} hold. By the analysis in Sections 4.2 to 4.4, Lemmas \ref{L1}, \ref{L2} and Proposition \ref{p4} hold if $C_1$, $C_2$ and $\varepsilon_0$ satisfy all the assumptions (\ref{con1}), (\ref{Lcon2}), (\ref{con3}) and (\ref{con4}), i.e., 
	\begin{equation}
		\tag{\ref{con1}}
		C_0\leqslant 2C_1,
	\end{equation}
	\begin{equation}
		\tag{\ref{Lcon2}}
		4\bar{C}_1C_1\varepsilon_0^{\frac{1}{d+1}}<\delta\lambda_0,
	\end{equation}
	\begin{equation}
		\tag{\ref{con3}}
		1+\bar{C}_2C_1\leqslant (4C_2)^{\frac{1}{d+1}},
	\end{equation}
	\begin{equation}
		\tag{\ref{con4}}
		\frac{\bar{C}_3}{1-\bar{C}_3C_2^{\frac{1}{d+1}}\varepsilon_0^{\frac{1}{d+1}}}\leqslant 2C_1.
	\end{equation}
	To fix these three constants, we firstly find a $C_1$ such that
	\begin{equation}
		\label{M1}
		C_1\geqslant \bar{C}_3
	\end{equation}
	and
	\begin{equation}
		\label{M2}
		C_1\geqslant \frac{C_0}{2}.
	\end{equation}
	Secondly, find a $C_2$ such that
	\begin{equation}
		\label{M3}
		C_2\geqslant \frac{(1+\bar{C}_2C_1)^{d+1}}{4}.
	\end{equation}
	Thirdly, find $\varepsilon_0$ such that
	\begin{equation}
		\label{M4}
		\varepsilon_0^{\frac{1}{d+1}}<\frac{\delta \lambda_0}{4\bar{C}_1C_1}
	\end{equation}
	and
	\begin{equation}
		\label{M5}
		\varepsilon_0^{\frac{1}{d+1}}\leqslant \frac{1}{2\bar{C}_3C_2^{\frac{1}{d+1}}}.
	\end{equation}
It is obvious to see such choice is workable. Then we will show that such $C_1$, $C_2$ and $\varepsilon_0$ satisfy the four assumptions. In fact, by (\ref{M2}), (\ref{con1}) holds; by (\ref{M4}), there holds (\ref{Lcon2}); by (\ref{M3}), we can deduce (\ref{con3}); by (\ref{M5}), we have that
	\begin{equation}
		\label{M6}
		\bar{C}_3C_2^{\frac{1}{d+1}}\varepsilon_0^{\frac{1}{d+1}}\leqslant \frac{1}{2}.
	\end{equation}
	Then, combining (\ref{M1}) and (\ref{M6}), we  confirm that (\ref{con4}) holds.
	
	Now we have that Proposition \ref{p4} holds. By (\ref{FG}), we have
	\begin{equation*}
		\label{Fresult}
		F[\rho](t,\lambda(t))\leqslant C_0\G{g(t)}^{\frac{1}{d+1}}(\lambda(t))\leqslant C\varepsilon_0^{\frac{1}{d+1}},
	\end{equation*}
	which means for any $k\neq 0$, 
	\begin{equation}
		\label{result2}
		e^{\lambda(t)\jbr{k,kt}}\abs{\hrho(t,k)}\jbr{k,kt}^\sigma\abs{k}^{-\alpha}\leqslant C\varepsilon_0 ^{\frac{1}{d+1}}.
	\end{equation}
	Since $\sigma>d+1$ and $\alpha<\frac{1}{d+1}$, (\ref{result2}) becomes
	\begin{equation}
		\label{result3}
		\begin{aligned}
			\abs{\hrho(t,k)}\leqslant & C e^{\lambda(t)\jbr{k,kt}}\varepsilon_0^{\frac{1}{d+1}}\\
			\leqslant & Ce^{-\lambda_0\jbr{t}}\varepsilon_0^{\frac{1}{d+1}}e^{-\lambda_0\abs{k}}.
		\end{aligned}
	\end{equation}
	Finally, by $ik\cdot \hE(t,k)=\hrho(t,k)$, togethor with (\ref{result3}), we have
	\begin{equation}
		\label{result}
		\begin{aligned}
			\abs{E(t,x)}\leqslant& \frac{1}{(2\pi)^d}\sum_{k\in\Z^d\setminus \{0\}} \abs{\hE(t,k)}\\
			\leqslant &\frac{1}{(2\pi)^d}\sum_{k\in\Z^d\setminus \{0\}}\frac{1}{\abs{k}} \abs{\hrho(t,k)}\\
			\leqslant &Ce^{-\lambda_0\jbr{t}}\varepsilon_0^{\frac{1}{d+1}}\sum_{k\in\Z^d\setminus \{0\}} \frac{1}{\abs{k}}e^{-\lambda_0\abs{k}}\\
			\leqslant &Ce^{-\lambda_0\jbr{t}}\varepsilon_0^{\frac{1}{d+1}}\sum_{k\in\Z^d\setminus \{0\}} e^{-\frac{\lambda_0}{2}\abs{k}}\\
			\leqslant & Ce^{-\lambda_0\jbr{t}}\varepsilon_0^{\frac{1}{d+1}}.
		\end{aligned}
	\end{equation}
Then, the estimate (\ref{result}) above indicates the Landau damping of non-linear Vlasov-Poisson system. This completes the proof of Theorem \ref{mainthm}.
	
\medskip
\noindent{\bf Acknowledgements:}  The research of Renjun Duan was partially supported by the General Research Fund (Project No.~14303523) from RGC of Hong Kong.

\medskip

\noindent{\bf Data Availability:} There are no data associated to this work to be made available

\medskip

\noindent{\bf Conflict of Interest:} The authors declare that they have no conflict of interest.

\end{document}